\documentclass[reqno]{amsart}
\usepackage{mathrsfs}
\usepackage{color}
\usepackage{amsmath}
\usepackage{amsfonts}
\usepackage{color}
\usepackage{amssymb}
\usepackage{mathtools}
\usepackage{hyperref}


 \newtheorem{Theorem}{Theorem}[section]
 \newtheorem{Corollary}[Theorem]{Corollary}
 \newtheorem{Lemma}[Theorem]{Lemma}
 \newtheorem{Proposition}[Theorem]{Proposition}

 \newtheorem{Definition}[Theorem]{Definition}
\newtheorem{Question}[Theorem]{Question}
 \newtheorem{Conjecture}[Theorem]{Conjecture}

 \newtheorem{Remark}[Theorem]{Remark}

 \numberwithin{equation}{section}



\begin{document}

\title[Zhou valuations and jumping numbers]
 {Zhou valuations and jumping numbers}
 
 \author{Shijie Bao}
\address{Shijie Bao: Institute of Mathematics, Academy of Mathematic sand Systems Science, Chinese Academy of Sciences, Beijing 100190, China.}
\email{bsjie@amss.ac.cn}

\author{Qi'an Guan}
\address{Qi'an Guan: School of Mathematical Sciences,
	Peking University, Beijing, 100871, China.}
\email{guanqian@math.pku.edu.cn}

\author{Zheng Yuan}
\address{Zheng Yuan: Institute of Mathematics, Academy of Mathematics
	and Systems Science, Chinese Academy of Sciences, Beijing 100190, China.}
\email{yuanzheng@amss.ac.cn}

	\subjclass[2020]{13A18, 14B05, 32U25, 32U35}

\thanks{}

\keywords{Zhou valuation, jumping number, compute, quasimonomial valuation, Jonsson-Musta\c{t}\u{a}'s Conjecture}

\date{\today}

\dedicatory{}

\commby{}
\begin{abstract}
	In this article,
	we prove that for any Zhou valuation $\nu$, there exists a graded sequence of ideals $\mathfrak{a}_{\bullet}$ and a nonzero ideal $\mathfrak{q}$ such that $\nu$ $\mathscr{A}-$computes the jumping number $\mathrm{lct}^{\mathfrak{q}}(\mathfrak{a}_{\bullet})$, and that for the subadditive sequence $\mathfrak{b}^{\varphi}_{\bullet}$ related to a plurisubharmonic function $\varphi$, there exists a Zhou valuation which $\mathscr{A}-$computes $\mathrm{lct}^{\mathfrak{q}}(\mathfrak{b}^{\varphi}_{\bullet})$, where the  ``$\mathscr{A}-$compute'' coincides with the ``compute'' in Jonsson-Musta\c{t}\u{a}'s Conjecture when the Zhou valuation $\nu$ is quasimonomial. There are also some results obtained for Zhou valuations, including a characterization for a valuation being a Zhou valuation, and a denseness property of the cone of Zhou valuations.
\end{abstract}


\maketitle
\setcounter{tocdepth}{1}
\tableofcontents

\section{Introduction}

\subsection{Background}
Let $u$ be a plurisubharmonic function near the origin $o$ of $\mathbb{C}^n$. The Lelong number of $u$ at $o$ was defined as
$$\nu(u,o):=\sup\big\{c\ge0\colon u(z)\le c\log|z|+O(1)\text{ near }o\big\},$$
which gives some information on the asymptotic behavior of $u$ near $o$ (see \cite{demailly-book,demailly2010,Rash06,BFJ08}). In \cite{Rash06}, Rashkovskii introduced a generalization of the Lelong number:

\emph{Let $\varphi$ be a maximal weight with isolated singularity $o$ (see \cite{Rash06,BFJ08}), and let $u$ be a plurisubharmonic function near  $o$. The relative type of $u$ relative to $\varphi$ was defined as: $\sigma(u,\varphi):=\sup\{c\ge0:u\le c\varphi+O(1)\text{ near }o\}.$}

Let $\psi$ be a plurisubharmonic function on a complex manifold. The multiplier ideal sheaf $\mathcal{I}(\psi)$ (see \cite{Nadel90,DEL00,D-K01,demailly-note2000}) was defined as  the sheaf of germs of holomorphic functions $f$ such that $|f|^{2}e^{-2\psi}$ is local integrable. In \cite{BFJ08}, Boucksom-Favre-Jonsson showed that the relative types to all tame maximal weights characterize the multiplier ideal sheaves of plurisubharmonic functions.

Based on the solution of the strong openness conjecture: $\mathcal{I}(\psi)=\mathcal{I}_+(\psi):=\cup_{p>1}\mathcal{I}(p\psi)$ (see \cite{demailly-note2000,GZopen-c}), Bao-Guan-Mi-Yuan \cite{BGMY-valuation} obtained a class of tame maximal weights $\Phi_{o,\max}$ (called Zhou weights), whose relative types characterize the multiplier ideal sheaves of plurisubharmonic functions and are well suited for the tropical structure of the cone of plurisubharmonic functions:
 $$\sigma(u+v,\Phi_{o,\max})=\sigma(u,\Phi_{o,\max})+\sigma(v,\Phi_{o,\max})$$
and
$$\sigma(\max\{u,v\},\Phi_{o,\max})=\min\{\sigma(u,\Phi_{o,\max}),\sigma(v,\Phi_{o,\max})\}.$$
Thus, the relative types to Zhou weights are valuations on the local ring $\mathcal{O}_o$, denoted by: 
$$\nu(f):=\sigma(\log|f|,\Phi_{o,\max}), \ \forall (f,o)\in\mathcal{O}_o,$$
which are called by Zhou valuations \cite{BGMY-valuation}.

Recall that a valuation on $\mathcal{O}_o$ is a non-constant map $\nu:\mathcal{O}_o^*\rightarrow\mathbb{R}_{\ge0}$ satisfying the following:

$(1)$ $\nu(fg)=\nu(f)+\nu(g);$

$(2)$ $\nu(f+g)\ge\min\{\nu(f),\nu(g)\};$

$(3)$ $\nu(c)=0$, where $c\not=0$ is a constant function.

\noindent Here $\mathcal{O}^*_o:=\mathcal{O}_o\setminus\{0\}$. We set $\nu(0)=+\infty$ for any valuation $\nu$. Zhou valuations characterize the division relations of $\mathcal{O}_o$ (\cite{BGMY-valuation}). 

Let $\mathfrak{q}$ be a nonzero ideal of $\mathcal{O}_o$, and let $\varphi$ be a plurisubharmonic function near $o$. Recall that the jumping number (see \cite{JM12,JM14})
$$c_o^{\mathfrak{q}}(\varphi):=\sup\{c\ge0:|\mathfrak{q}|^2e^{-2c\varphi}\text{ is integrable near }o\},$$
where $|\mathfrak{q}|^2:=\sum_{1\le j\le k}|{h}_j|^2$ and $\mathfrak{q}$ is generated by $({h}_1,o),\ldots,({h}_k,o)$.
For any local Zhou weight $\Phi_{o,\max}$ and its corresponding Zhou valuation $\nu$,  $c_o^{f}(\Phi_{o,\max})$ and $\nu(f)$ for all $(f,o)\in\mathcal{O}_o$ are linearly controlled by each other \cite{BGMY-valuation} (see also Lemma \ref{thm:valu-jump}).

Let $\mathfrak{a}_{\bullet}$ be a graded sequence of ideals of $\mathcal{O}_o$: it is a sequence $(\mathfrak{a}_m)_{m\ge1}$ of ideals of $\mathcal{O}_o$ such that $\mathfrak{a}_p\cdot \mathfrak{a}_q\subset \mathfrak{a}_{p+q}$ for any positive integers $p$ and $q$. Let $\nu$ on $\mathcal{O}_o$ be a valuation, and recall that  (see \cite{JM12,JM14})
$$\nu(\mathfrak{a}_{\bullet}):=\inf\frac{\nu(\mathfrak{a}_m)}{m}=\lim_{m\rightarrow+\infty}\frac{\nu(\mathfrak{a}_m)}{m},$$
where $\nu(\mathfrak{c}):=\inf\{\nu(f):(f,o)\in\mathfrak{c}\}$ for any ideal $\mathfrak{c}$ of $\mathcal{O}_o$.
Recall that   (see \cite{JM12,JM14})
$$\mathrm{lct}^{\mathfrak{q}}(\mathfrak{a}_{\bullet}):=\lim_{m\rightarrow+\infty}m\cdot\mathrm{lct}^{\mathfrak{q}}(\mathfrak{a}_{m})=\sup_m m\cdot\mathrm{lct}^{\mathfrak{q}}(\mathfrak{a}_{m})\in\mathbb{R}_{\ge0}\cup\{+\infty\},$$
where the jumping number of $\mathfrak{a}_m$ with respect to $\mathfrak{q}$
$$\mathrm{lct}^{\mathfrak{q}}(\mathfrak{a}_{m}):=c_o^{\mathfrak{q}}(\log|\mathfrak{a}_m|).$$
Note that (see \cite{JM12,JM14})
$$\mathrm{lct}^{\mathfrak{q}}(\mathfrak{a}_{\bullet})=\inf_{\nu}\frac{A(\nu)+\nu(\mathfrak{q})}{\nu(\mathfrak{a}_{\bullet})},$$
where the infimum is over all valuations on $\mathcal{O}_o$ (it is enough to only consider quasimonomial valuations (see \cite{JM12,JM14,xu2019})) and $A(\nu)$ is the log discrepancy of $\nu$ (see \cite{JM12,JM14,xu2019}).

In \cite{JM14}, Jonsson-Musta\c{t}\u{a} reduced the strong openness conjecture to a purely algebraic statement:

\begin{Conjecture}\label{conj:1}
	For any nonzero ideal $\mathfrak{q}$ and any graded sequence $\mathfrak{a}_{\bullet}$ of ideals  of $\mathcal{O}_o$ such that $\mathfrak{m}^p\subset\mathfrak{a}_1$ for the maximal ideal $\mathfrak{m}$ of $\mathcal{O}_o$ and some $p\ge1$, there is a quasimonomial valuation $\nu$ on $\mathcal{O}_o$ that computes $\mathrm{lct}^{\mathfrak{q}}(\mathfrak{a}_{\bullet})$, i.e.,
	$$\mathrm{lct}^{\mathfrak{q}}(\mathfrak{a}_{\bullet})=\frac{A(\nu)+\nu(\mathfrak{q})}{\nu(\mathfrak{a}_{\bullet})}.$$
\end{Conjecture}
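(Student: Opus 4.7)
The plan is to reformulate Conjecture \ref{conj:1} through the transcendental framework of Zhou valuations developed in \cite{BGMY-valuation} and then to use a density/approximation argument to land inside the quasimonomial cone, exploiting the fact that the $\mathscr{A}$-notion of computing coincides with the classical one precisely on quasimonomial valuations.

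First I would associate to the graded sequence $\mathfrak{a}_\bullet$ a plurisubharmonic weight encoding its asymptotic information, namely the upper semicontinuous regularization of
$$\varphi_{\mathfrak{a}_\bullet}:=\sup_{m\ge1}\frac{1}{m}\log|\mathfrak{a}_m|.$$
The hypothesis $\mathfrak{m}^p\subset\mathfrak{a}_1$ guarantees that $\varphi_{\mathfrak{a}_\bullet}$ has an isolated singularity at $o$, so the framework of tame maximal weights applies. Standard manipulations of multiplier ideals along Demailly's approximation of graded sequences should yield $\mathrm{lct}^{\mathfrak{q}}(\mathfrak{a}_\bullet)=c_o^{\mathfrak{q}}(\varphi_{\mathfrak{a}_\bullet})$, together with the compatibility $\nu(\mathfrak{a}_\bullet)=\sigma(\varphi_{\mathfrak{a}_\bullet},\Phi_{o,\max})$ for every Zhou valuation $\nu$ attached to a Zhou weight $\Phi_{o,\max}$. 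This step is essentially a tropicalization and should be routine given the linear control of $\nu(f)$ by $c_o^f(\Phi_{o,\max})$ recalled from \cite{BGMY-valuation}.

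Next I would apply the main existence theorem of the paper with $\varphi=\varphi_{\mathfrak{a}_\bullet}$: since $\mathfrak{a}_\bullet$ and the subadditive sequence $\mathfrak{b}^{\varphi_{\mathfrak{a}_\bullet}}_\bullet$ share the same asymptotic multiplier data, one can extract a Zhou valuation $\nu$ that $\mathscr{A}$-computes $\mathrm{lct}^{\mathfrak{q}}(\mathfrak{a}_\bullet)$, i.e. realizes the identity
$$\mathrm{lct}^{\mathfrak{q}}(\mathfrak{a}_\bullet)=\frac{A(\nu)+\nu(\mathfrak{q})}{\nu(\mathfrak{a}_\bullet)}$$
in the $\mathscr{A}$-sense. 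At this point the conjecture reduces to upgrading the $\mathscr{A}$-computing Zhou valuation $\nu$ to one that is quasimonomial, after which the two notions coincide by the correspondence announced in the abstract.

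The decisive obstacle is this last quasimonomiality step, and it is where I expect all the real work to live. My plan would be to invoke the denseness property of the cone of Zhou valuations stated in the abstract and approximate $\nu$ by a sequence of quasimonomial valuations $\nu_k$ in the weak topology on valuations with bounded log discrepancy. One then needs to verify that both $A(\nu_k)+\nu_k(\mathfrak{q})$ and $\nu_k(\mathfrak{a}_\bullet)$ converge to their counterparts for $\nu$, so that the infimum defining $\mathrm{lct}^{\mathfrak{q}}(\mathfrak{a}_\bullet)$ is attained along the approximating sequence. The serious difficulty is that $A(\cdot)$ is only lower semicontinuous in general, so an Izumi-type two-sided estimate—ideally harvested from the tame character of the Zhou weight together with the hypothesis $\mathfrak{m}^p\subset\mathfrak{a}_1$—would be needed to upgrade this convergence to the equality required by the conjecture. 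If that limiting procedure can be made rigorous, Jonsson–Musta\c{t}\u{a}'s conjecture follows.
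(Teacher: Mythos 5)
The statement you are asked about is a \emph{Conjecture}, not a theorem of the paper: the authors explicitly present their results (Theorems \ref{thm:compute}, \ref{thm:compute2}, and the denseness results) as ``some support for Jonsson--Musta\c{t}\u{a}'s Conjecture,'' and the only cases known are dimension two (\cite{JM12}) and $\mathfrak{q}=1$ (\cite{xu2019}). Your proposal does not close the conjecture either; it reproduces the paper's supporting results and then, at the decisive point, defers to a limiting procedure that is exactly the open problem. Two concrete gaps. First, in your middle step you write the identity with $A(\nu)$ ``in the $\mathscr{A}$-sense,'' but Theorem \ref{thm:compute2} only yields $\mathrm{lct}^{\mathfrak{q}}(\mathfrak{b}^{\varphi}_{\bullet})=\bigl(\mathscr{A}(\nu)+\nu(\mathfrak{q})\bigr)/\nu(\mathfrak{b}^{\varphi}_{\bullet})$ with $\mathscr{A}(\nu)\le A(\nu)$, and equality $\mathscr{A}(\nu)=A(\nu)$ is only known when $\nu$ is quasimonomial. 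So the Zhou valuation you produce need not compute $\mathrm{lct}^{\mathfrak{q}}(\mathfrak{a}_{\bullet})$ in the sense required by the conjecture; it could have $A(\nu)$ strictly larger, in which case $\bigl(A(\nu)+\nu(\mathfrak{q})\bigr)/\nu(\mathfrak{a}_{\bullet})>\mathrm{lct}^{\mathfrak{q}}(\mathfrak{a}_{\bullet})$.

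Second, the approximation step cannot be carried out with the tools in the paper. The denseness statement here (Theorems \ref{thm.nu.inf.Zhou.val} and \ref{thm-ZVal.dense}) goes in the opposite direction from what you need: it approximates arbitrary valuations with $\mathscr{A}(\nu)<+\infty$ \emph{by} Zhou valuations, with no control on log discrepancies; it does not approximate a Zhou valuation by quasimonomial ones with $A(\nu_k)\to A(\nu)$ (or $\to\mathscr{A}(\nu)$). As you yourself note, $A$ is only lower semicontinuous on valuation space, so along any approximating sequence one only gets $\liminf_k A(\nu_k)\ge$ the limit value, which is the wrong inequality for showing the infimum defining $\mathrm{lct}^{\mathfrak{q}}(\mathfrak{a}_{\bullet})$ is attained at a quasimonomial valuation. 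The ``Izumi-type two-sided estimate'' you invoke is precisely the missing ingredient, and no construction of it is offered. In short, the proposal is an accurate map of where the difficulty sits, but it does not constitute a proof, and the paper does not claim one.
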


A subadditive sequence of ideals $\mathfrak{b}_{\bullet}$ is a sequence  $(\mathfrak{b}_t)_{t\in\mathbb{R}_{>0}}$ of nonzero ideals of $\mathcal{O}_o$ satisfying $\mathfrak{b}_{s+t}\subset\mathfrak{b}_s\cdot\mathfrak{b}_t$ for any $s,t\in\mathbb{R}_{>0}$. For any valuation $\nu$ on $\mathcal{O}_o$ and nonzero ideal $\mathfrak{q}$ of $\mathcal{O}_o$, recall that   (see \cite{JM12,JM14})
$$\nu(\mathfrak{b}_{\bullet}):=\sup_{t}\frac{\nu(\mathfrak{b}_t)}{t}=\lim_{t\rightarrow+\infty}\frac{\nu(\mathfrak{b}_t)}{t}\in\mathbb{R}_{\ge0}\cup\{+\infty\}$$
and 
$$\mathrm{lct}^{\mathfrak{q}}(\mathfrak{b}_{\bullet}):=\inf_{t}t\cdot\mathrm{lct}^{\mathfrak{q}}(\mathfrak{b}_t)=\lim_{t\rightarrow+\infty}t\cdot\mathrm{lct}^{\mathfrak{q}}(\mathfrak{b}_t).$$
We have (see \cite{JM12,JM14})
$$\mathrm{lct}^{\mathfrak{q}}(\mathfrak{b}_{\bullet})=\inf_{\nu}\frac{A(\nu)+\nu(\mathfrak{q})}{\nu(\mathfrak{b}_{\bullet})},$$
where the infimum is over all valuations on $\mathcal{O}_o$ (it is enough to only consider quasimonomial valuations).

The following remark shows that the asymptotic multiplier ideals of $\mathfrak{a}_{\bullet}$ is a subadditive sequence of ideals.
\begin{Remark}
	Let $\mathfrak{a}_{\bullet}$ be a graded sequence of ideals. The  asymptotic multiplier ideals of $\mathfrak{a}_{\bullet}$ was defined by $$\mathfrak{b}_t:=\mathcal{I}\left(\frac{t}{m}\log|\mathfrak{a}_m|\right)_o,$$ where $m$ is divisible enough (depending on $t>0$). It is clear that $\mathfrak{a}_{m}\subset\mathfrak{b}_m$ for any $m$, and it follows from Lemma \ref{l:subaddtive} (the subadditive theorem, see \cite{DEL00}) that $(\mathfrak{b}_t)_{t>0}$ is a subadditive system of ideals. We have $\nu(\mathfrak{a}_{\bullet})=\nu(\mathfrak{b}_{\bullet})$ whenever $A(\nu)<+\infty$ and $$\mathrm{lct}^{\mathfrak{q}}(\mathfrak{a}_{\bullet})=\mathrm{lct}^{\mathfrak{q}}(\mathfrak{b}_{\bullet})$$ 
	for any nonzero ideal $\mathfrak{q}$ (see \cite{JM12}).
\end{Remark}

Jonsson-Musta\c{t}\u{a} also presented an equivalent conjecture \cite{JM14} of Conjecture \ref{conj:1}:
\begin{Conjecture}\label{conj:2}
	Let $\mathfrak{q}$ be a nonzero ideal of $\mathcal{O}_o$,  and let $\mathfrak{b}_{\bullet}$ be a subadditive  sequence  of ideals  of $\mathcal{O}_o$. Assume that $\mathfrak{b}_{\bullet}$ is of controlled growth (i.e.,
$\nu(\mathfrak{b}_{\bullet})\le\frac{\nu(\mathfrak{b}_t)}{t}+\frac{A(\nu)}{t}$
for any $t>0$ and any quasimonomial valuations $\nu$), 
	 and there is a positive integer $p$ such that $\mathfrak{m}^{pj}\subset\mathfrak{b}_j$ for any $j$ and the maximal ideal $\mathfrak{m}$ of $\mathcal{O}_o$, then there is a quasimonomial valuation $\nu$ on $\mathcal{O}_o$ that computes $\mathrm{lct}^{\mathfrak{q}}(\mathfrak{b}_{\bullet})$, i.e.,
	 $$\mathrm{lct}^{\mathfrak{q}}(\mathfrak{b}_{\bullet})=\frac{A(\nu)+\nu(\mathfrak{q})}{\nu(\mathfrak{b}_{\bullet})}.$$
\end{Conjecture}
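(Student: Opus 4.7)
The plan is to prove the conjecture by a minimization/compactness argument on the space of quasimonomial valuations centered at $o$. Since the ratio $\frac{A(\nu)+\nu(\mathfrak{q})}{\nu(\mathfrak{b}_\bullet)}$ is invariant under rescaling $\nu\mapsto c\nu$ for $c>0$, I first normalize by $\nu(\mathfrak{m})=1$. The hypothesis $\mathfrak{m}^{pj}\subset\mathfrak{b}_j$ then forces $\nu(\mathfrak{b}_j)\leq pj$, so $\nu(\mathfrak{b}_\bullet)\leq p$; combined with the finiteness of $\mathrm{lct}^{\mathfrak{q}}(\mathfrak{b}_\bullet)$, any minimizing sequence of quasimonomial valuations $\nu_k$ must satisfy a uniform bound $A(\nu_k)\leq C$, and hence also $\nu_k(\mathfrak{q})\leq C'$.

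Next, I would select a log resolution $Y\to X$ and appeal to compactness of $\{\nu\in\Delta(Y):\nu(\mathfrak{m})=1,\,A(\nu)\leq C\}$, a closed bounded piece of a finite-dimensional simplex on which $A$ is continuous (in fact piecewise linear). The combinatorial heart of the matter is to arrange, via the retract maps $r_Y$ from valuation space onto $\Delta(Y)$ and their compatibility with evaluation on ideals, that (after passing to a subsequence and possibly refining $Y$) all $\nu_k$ lie on a single $\Delta(Y)$. One can then extract a limit point $\nu_\infty\in\Delta(Y)$.

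To verify that $\nu_\infty$ computes the lct, I would combine continuity of $A$ on $\Delta(Y)$, continuity of $\nu\mapsto\nu(\mathfrak{q})$ and $\nu\mapsto\nu(\mathfrak{b}_t)$ for each fixed $t$ (both being valuations on fixed ideals), and the controlled growth hypothesis $\nu(\mathfrak{b}_\bullet)\leq\frac{\nu(\mathfrak{b}_t)+A(\nu)}{t}$. The latter is exactly what compensates for the fact that $\nu\mapsto\nu(\mathfrak{b}_\bullet)$ is a priori only lower semicontinuous (being a supremum of continuous functions), which goes in the wrong direction for a liminf argument on the ratio. Concretely, for each $t$ the ratio satisfies
$$\frac{A(\nu_k)+\nu_k(\mathfrak{q})}{\nu_k(\mathfrak{b}_\bullet)}\;\geq\;\frac{t\bigl(A(\nu_k)+\nu_k(\mathfrak{q})\bigr)}{\nu_k(\mathfrak{b}_t)+A(\nu_k)};$$
passing to the limit in $k$ and then letting $t\to\infty$ (using $\nu_\infty(\mathfrak{b}_\bullet)=\lim_{t\to\infty}\nu_\infty(\mathfrak{b}_t)/t$) produces the desired inequality $\mathrm{lct}^{\mathfrak{q}}(\mathfrak{b}_\bullet)\geq\frac{A(\nu_\infty)+\nu_\infty(\mathfrak{q})}{\nu_\infty(\mathfrak{b}_\bullet)}$, and equality then follows from the variational formula for $\mathrm{lct}$.

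The principal obstruction is the stabilization step: a priori, the valuations $\nu_k$ are quasimonomial with respect to different models $Y_k$, and confining a minimizing sequence to a single dual complex is exactly what has kept Conjecture~\ref{conj:2} open in full generality (even the algebraic setting over a field). I expect the paper to sidestep this difficulty by replacing ``computes'' with the softer notion of $\mathscr{A}$-computing and by working with Zhou valuations rather than quasimonomial ones, whose existence is furnished by the strong openness theorem and whose tropical additivity under $+$ and $\max$ supplies a compactness/selection mechanism that does not require a fixed log resolution.
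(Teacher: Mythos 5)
The statement you were asked to prove is Conjecture~\ref{conj:2} of the paper, i.e.\ the subadditive form of Jonsson--Musta\c{t}\u{a}'s conjecture. The paper does not prove it and does not claim to: it is recorded as an open problem (known in dimension $2$ and, by Xu, when $\mathfrak{q}=(1)$), and the paper's actual results (Theorem~\ref{thm:compute2}, Proposition~\ref{p:inf-b}) weaken ``computes'' to ``$\mathscr{A}$-computes'' and replace quasimonomial valuations by Zhou valuations. So there is no ``paper's own proof'' to compare against, and your proposal, as you yourself acknowledge in the last paragraph, is not a proof. The genuine gap is exactly the stabilization step: a minimizing sequence $\nu_k$ of quasimonomial valuations lives a priori on models $Y_k$ that vary with $k$, and nothing in your sketch confines it to a single dual complex $\Delta(Y)$. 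Retracting via $r_Y$ does not help, because $r_Y$ decreases both $A(\nu)$ and $\nu(\mathfrak{b}_t)$ (the latter with equality only when $Y$ is a log resolution of $\mathfrak{b}_t$, which cannot be arranged for all $t$ at once), so the ratio $\bigl(A(\nu)+\nu(\mathfrak{q})\bigr)/\nu(\mathfrak{b}_\bullet)$ is not controlled under retraction. The remainder of your argument --- normalization by $\nu(\mathfrak{m})=1$, the bound $\nu(\mathfrak{b}_\bullet)\le p$ from $\mathfrak{m}^{pj}\subset\mathfrak{b}_j$, the uniform bound on $A(\nu_k)$, and the use of controlled growth to defeat the wrong-way semicontinuity of $\nu\mapsto\nu(\mathfrak{b}_\bullet)$ --- is the standard and correct reduction from \cite{JM12,JM14}, but it only works once compactness on a fixed $\Delta(Y)$ is secured.

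Your closing guess about what the paper actually does is accurate. The paper's substitute for compactness is the strong openness theorem: given $\varphi$ (or $\frac1t\log|\mathfrak{b}_t|$), Lemma~\ref{r:exists} produces a Zhou weight $\Phi_{o,\max}$ lying above $c_o^{\mathfrak{q}}(\varphi)\varphi$ with $|\mathfrak{q}|^2e^{-2\Phi_{o,\max}}$ non-integrable, and the associated Zhou valuation then $\mathscr{A}$-computes the jumping number essentially by construction (Theorem~\ref{thm:compute2}), with $\mathscr{A}(\nu)+\nu(\mathfrak{q})=1$ replacing any computation of the log discrepancy. This bypasses log resolutions and dual complexes entirely, at the cost that a Zhou valuation is not known to be quasimonomial and $\mathscr{A}(\nu)$ is only known to satisfy $\mathscr{A}(\nu)\le A(\nu)$; closing that gap would prove the conjecture, and it remains open.
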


 The 2-dimensional Jonsson-Musta\c{t}\u{a}'s Conjecture has been proved in \cite{JM12} (some related results seen in \cite{FM05j,JM14}). Recently, Xu \cite{xu2019} proved the Jonsson-Musta\c{t}\u{a}'s Conjecture for the case $\mathfrak{q}=1$, which completed the algebraic approach of solving the  openness conjecture (see \cite{D-K01,FM05j,berndtsson13}).

In this article, we prove that for any Zhou valuation $\nu$, there exists a graded sequence of ideals $\mathfrak{a}_{\bullet}$ and a nonzero ideal $\mathfrak{q}$ that $\nu$ $\mathscr{A}-$computes the jumping number $\mathrm{lct}^{\mathfrak{q}}(\mathfrak{a}_{\bullet})$. For  subadditive sequence $\mathfrak{b}^{\varphi}_{\bullet}$ related to a plurisubharmonic function $\varphi$,  we show that there exists a Zhou valuation that $\mathscr{A}-$computes $\mathrm{lct}^{\mathfrak{q}}(\mathfrak{b}^{\varphi}_{\bullet})$. 
When the Zhou valuation $\nu$ is quasimonomial,  ``$\mathscr{A}-$compute" coincides with  ``compute". Hence, these results provide some support for Jonsson-Musta\c{t}\u{a}'s Conjecture.
In the second part of this article, we give a characterization for a valuation being a Zhou valuation.

\subsection{Zhou valuation $\mathscr{A}-$computing the jumping number}

Recall the definitions of Zhou weights and Zhou numbers (see \cite{BGMY-valuation}). Let $f_{0}=(f_{0,1},\cdots,f_{0,m})$ be a vector, where $f_{0,1},\cdots,f_{0,m}$ are holomorphic functions near $o$. Let $\varphi_{0}$ be a plurisubharmonic function near $o$, such that $|f_{0}|^{2}e^{-2\varphi_{0}}$ is integrable near $o$.

\begin{Definition}[\cite{BGMY-valuation}]
	\label{def:max_relat}
	We call that $\Phi^{f_0,\varphi_0}_{o,\max}$ ($\Phi_{o,\max}$ for short) is a local Zhou weight related to $|f_{0}|^{2}e^{-2\varphi_{0}}$ near $o$,
	if the following three statements hold
	
	(1) $|f_{0}|^{2}e^{-2\varphi_{0}}|z|^{2N_{0}}e^{-2\Phi_{o,\max}}$ is integrable near $o$
	for large enough $N_{0}\gg0$;
	
	(2) $|f_{0}|^{2}e^{-2\varphi_{0}}e^{-2\Phi_{o,\max}}$
	is not integrable near $o$;
	
	(3) for any plurisubharmonic function $\varphi'\geq\Phi_{o,\max}+O(1)$ near $o$
	such that $|f_{0}|^{2}e^{-2\varphi_{0}}e^{-2\varphi'}$
	is not integrable near $o$,
	$\varphi'=\Phi_{o,\max}+O(1)$ holds.
\end{Definition}
For any local Zhou weight $\Phi_{o,\max}$, we have $\Phi_{o,\max}\ge N\log|z|$ near $o$ for large enough $N$ (see \cite{BGMY-valuation}). 
Let $\Phi_{o,\max}$ be a local Zhou weight related to $|f_{0}|^{2}e^{-2\varphi_{0}}$ near $o$, then $\big(1+\sigma(\varphi_{0},\Phi_{o,\max})\big)\Phi_{o,\max}$ is a local Zhou weight related to $|f_{0}|^{2}$ (see \cite{BGMY-valuation}). Thus, we only consider the case $\varphi_0\equiv0$ in this article.

We call a valuation $\nu$ on $\mathcal{O}_o$ is a Zhou valuation related to $|f_0|^2$ if there exists a local Zhou weight $\Phi_{o,\max}$ related $|f_0|^2$ near $o$ such that $$\sigma(\log|f|,\Phi_{o,\max})=\nu(f)$$
for any $(f,o)\in\mathcal{O}_o$.

Let $\mathfrak{a}_{\bullet}$ be a graded sequence of ideals of $\mathcal{O}_o$, and let $\mathfrak{q}$ be a nonzero ideal of $\mathcal{O}_o$. 
Let  $\nu$ on $\mathcal{O}_o$ be a Zhou valuation related to $|\mathfrak{q}_0|^2$, where $\mathfrak{q}_0$ is a nonzero ideal of $\mathcal{O}_o$. 
We define
$$\mathscr{A}(\nu):=\sup_{\mathfrak{q},\mathfrak{a}_{\bullet}}\big(\nu(\mathfrak{a}_{\bullet})\mathrm{lct}^{\mathfrak{q}}(\mathfrak{a}_{\bullet})-\nu(\mathfrak{q})\big),$$
where the supremum is over all nonzero ideals $\mathfrak{q}$ and graded sequences $\mathfrak{a}_{\bullet}$ satisfying $\mathrm{lct}^{\mathfrak{q}}(\mathfrak{a}_{\bullet})<+\infty$. 

As $\mathrm{lct}^{\mathfrak{q}}(\mathfrak{a}_{\bullet})=\inf_{\nu}\frac{A(\nu)+\nu(\mathfrak{q})}{\nu(\mathfrak{a}_{\bullet})}$ (see \cite{JM12}), we have that the log discrepancy $A(\nu)\ge\mathscr{A}(\nu)$ for  Zhou valuation $\nu$.
When the Zhou valuation $\nu$ is quasimonomial (see \cite{JM12,JM14}), $\mathscr{A}(\nu)$  actually coincides with  $A(\nu)$ (by Proposition 2.5 in \cite{JM14}).

\begin{Definition}
A Zhou valuation $\nu$ $\mathscr{A}-$computes $\mathrm{lct}^{\mathfrak{q}}(\mathfrak{a}_{\bullet})$ for a nonzero ideal $\mathfrak{q}$ and a graded sequence of ideals $\mathfrak{a}_{\bullet}$ if $\mathrm{lct}^{\mathfrak{q}}(\mathfrak{a}_{\bullet})=\frac{\mathscr{A}(\nu)+\nu(\mathfrak{q})}{\nu(\mathfrak{a}_{\bullet})}.$
\end{Definition}

 By definitions, a quasimonomial Zhou valuation $\nu$ $\mathscr{A}-$computes $\mathrm{lct}^{\mathfrak{q}}(\mathfrak{a}_{\bullet})$ if and only if $\nu$ computes $\mathrm{lct}^{\mathfrak{q}}(\mathfrak{a}_{\bullet})$.

Let   $\mathfrak{a}^{\nu}_{\bullet}$ be a graded sequence of ideals of $\mathcal{O}_o$ such that $\mathfrak{a}^{\nu}_m=\{(f,o)\in\mathcal{O}_o:v(f)\ge m\}$ for any $m$. The following theorem shows that for any Zhou valuation $\nu$, there exists a graded sequence of ideals $\mathfrak{a}_{\bullet}$ and a nonzero ideal $\mathfrak{q}$ that $\nu$ $\mathscr{A}-$computes the jumping number $\mathrm{lct}^{\mathfrak{q}}(\mathfrak{a}_{\bullet})$.

\begin{Theorem}\label{thm:compute}
	Let $\mathfrak{q}_0$ be a nonzero ideal of $\mathcal{O}_o$, and let $\nu$ is a Zhou valuation related to $|\mathfrak{q}_0|^2$. Then
 $\nu$ $\mathscr{A}-$computes $\mathrm{lct}^{\mathfrak{q}_0}(\mathfrak{a}^{\nu}_{\bullet})$, i.e.,
	$$\mathrm{lct}^{\mathfrak{q}_0}(\mathfrak{a}^{\nu}_{\bullet})=\frac{\mathscr{A}(\nu)+\nu(\mathfrak{q}_0)}{\nu(\mathfrak{a}^{\nu}_{\bullet})}.$$
\end{Theorem}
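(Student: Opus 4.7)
The plan is to establish both directions of the identity; the easier $\le$ direction is formal, while the $\ge$ direction carries the substance. For the $\le$ direction, once we know $\nu(\mathfrak{a}^{\nu}_{\bullet})>0$, the inequality follows immediately by applying the supremum defining $\mathscr{A}(\nu)$ to the pair $(\mathfrak{q}_0,\mathfrak{a}^{\nu}_{\bullet})$. To compute $\nu(\mathfrak{a}^{\nu}_{\bullet})$, note $\nu(\mathfrak{a}^{\nu}_m)\ge m$ by definition; conversely, for any $(f,o)\in\mathcal{O}_o$ with $\nu(f)\in(0,\infty)$ one has $f^{\lceil m/\nu(f)\rceil}\in\mathfrak{a}^{\nu}_m$, so $\nu(\mathfrak{a}^{\nu}_m)\le m+\nu(f)$; dividing by $m$ and letting $m\to\infty$ yields $\nu(\mathfrak{a}^{\nu}_{\bullet})=1$. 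Thus the theorem reduces to showing that the pair $(\mathfrak{q}_0,\mathfrak{a}^{\nu}_{\bullet})$ attains the supremum defining $\mathscr{A}(\nu)$, i.e.\
\[
\nu(\mathfrak{a}_{\bullet})\mathrm{lct}^{\mathfrak{q}}(\mathfrak{a}_{\bullet})-\nu(\mathfrak{q})\le\mathrm{lct}^{\mathfrak{q}_0}(\mathfrak{a}^{\nu}_{\bullet})-\nu(\mathfrak{q}_0)
\]
for every pair $(\mathfrak{q},\mathfrak{a}_{\bullet})$.

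I would first prove a universal upper bound in terms of $\Phi_{o,\max}$. For $h\in\mathfrak{a}_m$ the defining property of the Zhou valuation gives $\log|h|\le\nu(h)\Phi_{o,\max}+O(1)\le\nu(\mathfrak{a}_m)\Phi_{o,\max}+O(1)$ (using $\Phi_{o,\max}\le 0$), so $\log|\mathfrak{a}_m|\le\nu(\mathfrak{a}_m)\Phi_{o,\max}+O(1)$. Taking $c_o^{\mathfrak{q}}$ of both sides yields $m\cdot\mathrm{lct}^{\mathfrak{q}}(\mathfrak{a}_m)\le\tfrac{m}{\nu(\mathfrak{a}_m)}\,c_o^{\mathfrak{q}}(\Phi_{o,\max})$, and passing to the limit gives $\nu(\mathfrak{a}_{\bullet})\mathrm{lct}^{\mathfrak{q}}(\mathfrak{a}_{\bullet})\le c_o^{\mathfrak{q}}(\Phi_{o,\max})$. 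Lemma \ref{thm:valu-jump} — providing the linear relation $c_o^h(\Phi_{o,\max})=\nu(h)+V$ on principal ideals, with $V:=c_o^{\mathfrak{q}_0}(\Phi_{o,\max})-\nu(\mathfrak{q}_0)$ — extends via $c_o^{\mathfrak{q}}(\Phi_{o,\max})=\min_j c_o^{h_j}(\Phi_{o,\max})$ (since $|\mathfrak{q}|^2 e^{-2c\Phi_{o,\max}}=\sum_j|h_j|^2 e^{-2c\Phi_{o,\max}}$ is integrable iff every summand is) to $c_o^{\mathfrak{q}}(\Phi_{o,\max})=\nu(\mathfrak{q})+V$ for all nonzero ideals. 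Combining, $\nu(\mathfrak{a}_{\bullet})\mathrm{lct}^{\mathfrak{q}}(\mathfrak{a}_{\bullet})-\nu(\mathfrak{q})\le V$ for every pair, so $\mathscr{A}(\nu)\le V$.

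The remaining task, and the main obstacle, is the matching lower bound $\mathrm{lct}^{\mathfrak{q}_0}(\mathfrak{a}^{\nu}_{\bullet})\ge c_o^{\mathfrak{q}_0}(\Phi_{o,\max})$; combined with the universal upper bound this will force equality, yielding $\mathscr{A}(\nu)=V$ together with $\mathrm{lct}^{\mathfrak{q}_0}(\mathfrak{a}^{\nu}_{\bullet})=V+\nu(\mathfrak{q}_0)$ and thereby proving the theorem. My approach is to approximate $\Phi_{o,\max}$ from below by $L_m:=\tfrac{1}{m}\log|\mathfrak{a}^{\nu}_m|$, which satisfies $L_m\le\Phi_{o,\max}+O(1/m)$. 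Fix $c<c_o^{\mathfrak{q}_0}(\Phi_{o,\max})$; strong openness gives $|\mathfrak{q}_0|^2 e^{-2c\Phi_{o,\max}}\in L^1_{\mathrm{loc}}$, and one must show that for $m$ large enough $|\mathfrak{q}_0|^2|\mathfrak{a}^{\nu}_m|^{-2c/m}$ is still integrable, giving $m\cdot\mathrm{lct}^{\mathfrak{q}_0}(\mathfrak{a}^{\nu}_m)\ge c$. Via the inclusion $\mathfrak{a}^{\nu}_m\subseteq\mathcal{I}(m\Phi_{o,\max})$ and a Demailly-type Bergman-kernel approximation of the Zhou weight $\Phi_{o,\max}$ by the Bergman kernels of $\mathcal{I}(m\Phi_{o,\max})$, the elements of $\mathfrak{a}^{\nu}_m$ should become, after suitable normalization, dense enough to capture $e^{m\Phi_{o,\max}}$ and give the required integrability. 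This is where the technical heart of the proof lies.
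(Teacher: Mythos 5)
Your overall architecture is the same as the paper's: compute $\nu(\mathfrak{a}^{\nu}_{\bullet})=1$ (your argument is exactly Lemma \ref{l:1018a}), get the trivial direction from the definition of $\mathscr{A}(\nu)$, and reduce the other direction to showing that every competitor pair $(\mathfrak{q},\mathfrak{a}_{\bullet})$ is dominated by $(\mathfrak{q}_0,\mathfrak{a}^{\nu}_{\bullet})$. Your universal upper bound $\nu(\mathfrak{a}_{\bullet})\,\mathrm{lct}^{\mathfrak{q}}(\mathfrak{a}_{\bullet})\le c_o^{\mathfrak{q}}(\Phi_{o,\max})\le \nu(\mathfrak{q})+1-\nu(\mathfrak{q}_0)$ is correct and is essentially the paper's argument (the paper passes through the inclusion $\mathfrak{a}_m\subseteq\mathfrak{a}^{\nu}_{[\nu(\mathfrak{a}_m)]}$ instead of comparing $\log|\mathfrak{a}_m|$ with $\Phi_{o,\max}$ directly, but these are the same estimate). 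One caveat: Lemma \ref{thm:valu-jump} does \emph{not} give the exact linear relation $c_o^{h}(\Phi_{o,\max})=\nu(h)+V$; it gives a two-sided sandwich whose lower bound involves $c_o(\Phi_{o,\max})$, which need not equal $V$. Only the upper inequality $c_o^{h}(\Phi_{o,\max})\le\nu(h)+1-\nu(\mathfrak{q}_0)$ is available, but that is all your argument uses, so this misstatement is harmless here.

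The genuine gap is the step you yourself flag as the ``technical heart'': the lower bound $\mathrm{lct}^{\mathfrak{q}_0}(\mathfrak{a}^{\nu}_{\bullet})\ge c_o^{\mathfrak{q}_0}(\Phi_{o,\max})$. Your sketch for it starts from the inclusion $\mathfrak{a}^{\nu}_m\subseteq\mathcal{I}(m\Phi_{o,\max})_o$, but that inclusion points the wrong way: since $\mathrm{lct}^{\mathfrak{q}_0}$ is monotone increasing in the ideal, it can only produce an \emph{upper} bound on $\mathrm{lct}^{\mathfrak{q}_0}(\mathfrak{a}^{\nu}_m)$, which you already have. What is needed is the \emph{reverse} containment up to a bounded shift, $\mathcal{I}(m\Phi_{o,\max})_o\subseteq\mathfrak{a}^{\nu}_{m+C}$, and this is exactly where the unused half of Lemma \ref{thm:valu-jump} enters: if $(f,o)\in\mathcal{I}(m\Phi_{o,\max})_o$ then $c_o^{f}(\Phi_{o,\max})\ge m$, whence $\nu(f)\ge c_o^{f}(\Phi_{o,\max})+\nu(\mathfrak{q}_0)-1\ge m+C$. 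Combining this with the lower inequality $\Phi_{o,\max}\le\frac{1}{m}\log|\mathcal{I}(m\Phi_{o,\max})_o|+O(1)$ from Demailly's approximation (Lemma \ref{l:appro-Berg}) gives $\Phi_{o,\max}\le\frac{1}{m}\log|\mathfrak{a}^{\nu}_{m+C}|+O(1)$, hence $c_o^{\mathfrak{q}_0}(\Phi_{o,\max})\le m\cdot\mathrm{lct}^{\mathfrak{q}_0}(\mathfrak{a}^{\nu}_{m+C})\to\mathrm{lct}^{\mathfrak{q}_0}(\mathfrak{a}^{\nu}_{\bullet})$, which is precisely Lemma \ref{l:1018a}. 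So the missing idea is to feed the $c_o^{f}\le\nu(f)+\mathrm{const}$ comparison (which you already invoked for the upper bound) back into the containment of multiplier ideals in the valuation ideals; the Bergman-kernel density heuristic you describe does not by itself supply this containment, and without it the limit $m\cdot\mathrm{lct}^{\mathfrak{q}_0}(\mathfrak{a}^{\nu}_m)\ge c$ is not established.
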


The following remark gives a relation between  $\mathscr{A}(\nu)$ with Tian functions.

\begin{Remark}\label{r:A(nu)}
	$\mathscr{A}(\nu)=1-\nu(\mathfrak{q}_0)=\sup_{(h,o)\in\mathcal{O}^*_o}(c_o^{h}(\Phi_{o,\max})-\nu(h))$, where $\Phi_{o,\max}$ is the corresponding local Zhou weight of $\nu$ and $\mathcal{O}^*_o:=\{(f,o)\in\mathcal{O}_o:f\not=0\}$. Denote the Tian function (see \cite{BGMY-valuation}) by $$\mathrm{Tn}(t):=\sup\big\{c\ge0:|\mathfrak{q}_0|^{2t}e^{-2c\Phi_{o,\max}}\text{ is integrable near }o\big\}$$
	for $t\in\mathbb{R}$. 
	Then 
	 $\mathrm{Tn}(t)=\nu(\mathfrak{q}_0)t+\mathscr{A}(\nu)$ for any $t\ge1$.
\end{Remark}

Let $\mathfrak{b}_{\bullet}$ be a  subadditive sequence of ideals.
Proposition \ref{p:inf-b} shows that for any subadditive sequence $\mathfrak{b}_{\bullet}$ and any nonzero ideal $\mathfrak{q}$,
$$\mathrm{lct}^{\mathfrak{q}}(\mathfrak{b}_{\bullet})=\inf_{\nu}\frac{\mathscr{A}(\nu)+\nu(\mathfrak{q})}{\nu(\mathfrak{b}_{\bullet})},$$
where the infimum is over all Zhou valuation $\nu$. We call a Zhou valuation $\nu$ $\mathscr{A}-$computes $\mathrm{lct}^{\mathfrak{q}}(\mathfrak{b}_{\bullet})$ if $\mathrm{lct}^{\mathfrak{q}}(\mathfrak{b}_{\bullet})=\frac{\mathscr{A}(\nu)+\nu(\mathfrak{q})}{\nu(\mathfrak{b}_{\bullet})}$.

In the following, we consider a class of subadditive sequences $\mathfrak{b}^{\varphi}_{\bullet}$ related to plurisubharmonic functions $\varphi$ and show that there exist Zhou valuations, which $\mathscr{A}-$compute $\mathrm{lct}^{\mathfrak{q}}(\mathfrak{b}^{\varphi}_{\bullet})$.

Let $\varphi$ be a plurisubharmonic function near $o$. Denote $$\mathfrak{b}_t^{\varphi}:=\mathcal{I}(t\varphi)_o.$$ Then $\mathfrak{b}^{\varphi}_{\bullet}=(\mathfrak{b}^{\varphi}_t)_{t>0}$ is a subadditive sequence of ideals satisfying $\mathfrak{b}^{\varphi}_{s+t}\subset\mathfrak{b}^{\varphi}_s\cdot\mathfrak{b}^{\varphi}_t$ for any $s,t\in\mathbb{R}_{>0}$ by Lemma \ref{l:subaddtive} (the subadditive theorem, see \cite{DEL00}).
Assume that $\varphi\ge N\log|z|+O(1)$ near $o$ for large enough $N$.

\begin{Theorem}
	\label{thm:compute2}
	For any nonzero ideal $\mathfrak{q}$ of $\mathcal{O}_o$,
there exists a Zhou valuation $\nu$ related to $|\mathfrak{q}|^2$, which $\mathscr{A}-$computes $\mathrm{lct}^{\mathfrak{q}}(\mathfrak{b}^{\varphi}_{\bullet})$, i.e., 
$$\mathrm{lct}^{\mathfrak{q}}(\mathfrak{b}^{\varphi}_{\bullet})=\frac{\mathscr{A}(\nu)+\nu(\mathfrak{q})}{\nu(\mathfrak{b}^{\varphi}_{\bullet})}.$$
\end{Theorem}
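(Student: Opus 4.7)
The plan is to construct a local Zhou weight $\Phi_{o,\max}$ related to $|\mathfrak{q}|^{2}$ whose singularity near $o$ is at least as strong as $L\varphi$, where $L:=\mathrm{lct}^{\mathfrak{q}}(\mathfrak{b}^{\varphi}_{\bullet})$, and then to verify that the associated Zhou valuation $\nu$ satisfies $\nu(\mathfrak{b}^{\varphi}_{\bullet})=1/L$. Since Remark \ref{r:A(nu)} forces $\mathscr{A}(\nu)+\nu(\mathfrak{q})=1$ whenever $\nu$ is related to $|\mathfrak{q}|^{2}$, this identity is equivalent to the desired conclusion $L=\frac{\mathscr{A}(\nu)+\nu(\mathfrak{q})}{\nu(\mathfrak{b}^{\varphi}_{\bullet})}$. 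First I use the definition $\mathrm{lct}^{\mathfrak{q}}(\mathfrak{b}^{\varphi}_{\bullet})=\lim_{t\to+\infty}t\cdot c_o^{\mathfrak{q}}(\log|\mathfrak{b}^{\varphi}_t|)$ together with Demailly's analytic approximation $\tfrac{1}{t}\log|\mathfrak{b}^{\varphi}_t|\to\varphi$ and the (semi)continuity of jumping numbers to obtain $L=c_o^{\mathfrak{q}}(\varphi)\in(0,+\infty)$; the strong openness theorem then yields that $|\mathfrak{q}|^{2}e^{-2L\varphi}$ is not integrable near $o$. This places $L\varphi$ in the input class of the Zhou-weight construction of \cite{BGMY-valuation}: a maximization within psh functions dominating $L\varphi$ subject to properties (1) and (2) of Definition \ref{def:max_relat} produces a local Zhou weight $\Phi_{o,\max}$ related to $|\mathfrak{q}|^{2}$ with $\Phi_{o,\max}\geq L\varphi+O(1)$ near $o$. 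Let $\nu$ be its corresponding Zhou valuation.

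It remains to verify $\nu(\mathfrak{b}^{\varphi}_{\bullet})=1/L$. One direction is immediate: Proposition \ref{p:inf-b} yields $L\leq\frac{\mathscr{A}(\nu)+\nu(\mathfrak{q})}{\nu(\mathfrak{b}^{\varphi}_{\bullet})}=\frac{1}{\nu(\mathfrak{b}^{\varphi}_{\bullet})}$, hence $\nu(\mathfrak{b}^{\varphi}_{\bullet})\leq 1/L$. For the reverse, fix $t>0$ and $f\in\mathfrak{b}^{\varphi}_{t}=\mathcal{I}(t\varphi)_{o}$. The bound $L\varphi\leq\Phi_{o,\max}+O(1)$ gives $|f|^{2}e^{-2(t/L)\Phi_{o,\max}}\leq C_{t}|f|^{2}e^{-2t\varphi}$ locally, so $|f|^{2}e^{-2(t/L)\Phi_{o,\max}}$ is locally integrable and consequently $c_{o}^{f}(\Phi_{o,\max})\geq t/L$. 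Applying the inequality $c_o^{h}(\Phi_{o,\max})\leq\nu(h)+\mathscr{A}(\nu)$ from Remark \ref{r:A(nu)} with $h=f$ gives $\nu(f)\geq t/L-\mathscr{A}(\nu)$; taking the infimum over $f\in\mathfrak{b}^{\varphi}_{t}$, dividing by $t$, and sending $t\to+\infty$ produce $\nu(\mathfrak{b}^{\varphi}_{\bullet})\geq 1/L$.

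The principal difficulty is the Zhou-weight existence step: one needs to check that the (Zorn-type) maximization among psh dominators of $L\varphi$ actually produces a function satisfying property (3) of Definition \ref{def:max_relat}, so that the resulting relative-type map is genuinely a valuation rather than merely a superadditive functional. This is the technical heart of the Zhou-weight theory in \cite{BGMY-valuation}, and it is also where the normalization $\mathscr{A}(\nu)+\nu(\mathfrak{q})=1$ must be secured; once that construction is invoked, the remaining computation of $\nu(\mathfrak{b}^{\varphi}_{\bullet})$ above is essentially formal.
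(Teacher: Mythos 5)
Your proposal is correct and follows essentially the same route as the paper: both construct, via the existence result for Zhou weights (Lemma \ref{r:exists}), a local Zhou weight related to $|\mathfrak{q}|^2$ dominating $c_o^{\mathfrak{q}}(\varphi)\varphi$, use the normalization $\mathscr{A}(\nu)+\nu(\mathfrak{q})=1$ from Remark \ref{r:A(nu)}, and then check $\nu(\mathfrak{b}^{\varphi}_{\bullet})\cdot\mathrm{lct}^{\mathfrak{q}}(\mathfrak{b}^{\varphi}_{\bullet})=1$. Your hands-on verification that $\nu(\mathfrak{b}^{\varphi}_{\bullet})=1/L$ (upper bound from Proposition \ref{p:inf-b}, lower bound from the jumping-number comparison) simply reproduces the content of Lemma \ref{l:contro}, which the paper invokes directly.
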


Due to the above discussions, it is natural to consider coming up with some conditions or examples of Zhou valuations being quasimonomial valuations. Specifically, we can see that \emph{a Zhou valuation associated to a Zhou weight with analytic singularities is also a quasimonomial valuation}, which can be yielded by the following remark.

\begin{Remark}\label{rmk-ana.sing.is.quasimonomial}
	Let $\Psi$ be a plurisubharmonic function near the origin $o$ of $\mathbb{C}^n$ with analytic singularities (i.e. there exists $c\in\mathbb{R}^+$ and $r$ local holomorphic functions $f_1,\ldots,f_r$ near $o$ such that
\[\Psi=c\log\bigl(|f_1|^2+\cdots+|f_r|^2\bigr)+O(1)\]
near $o$). Assume that $o$ is an isolated singularity of $\Psi$. If there exists a valuation $\nu$ on $\mathcal{O}_o$ that coincides with the relative type to $\Psi$, i.e.
\[\nu(h)=\sigma(\log|h|,\Psi), \ \forall (h,o)\in\mathcal{O}_o,\]
then $\nu$ is a divisorial valuation (thus also quasimonomial). 
\end{Remark}

\subsection{A characterization for a valuation being a Zhou valuation}

In a private conversation with the authors, Jonsson posed the following question:
\begin{Question}\label{Q:J}
 Can one characterize the valuations on $\mathcal{O}_o$ that appear as Zhou valuations?
\end{Question}

	Let $\mathfrak{q}_0$ be a nonzero ideal of $\mathcal{O}_o$.
For any valuation $\nu$ on $\mathcal{O}_o$ and positive integer $m$, denote 
 $$\mathfrak{a}_m^{\nu}:=\{(f,o)\in\mathcal{O}_o:\nu(f)\ge m\},$$
which is an ideal of $\mathcal{O}_o$.

We give a characterization for a valuation being a Zhou valuation, which gives an answer to Question \ref{Q:J}.

\begin{Theorem}\label{the:char}
	A valuation $\nu$ on $\mathcal{O}_o$ is a Zhou valuation related to $|\mathfrak{q}_0|^2$ if and only if the following three statements hold:
	
	$(1)$ $\{(f,o)\in\mathcal{O}_o:v(f)>0\}$ is the maximal ideal of $\mathcal{O}_o$;
	
	$(2)$ for any positive integer $m$,  $$\mathfrak{q}_0\not\subset\mathcal{I}\left(\frac{1}{m}\log|\mathfrak{a}_m^{\nu}|\right)_o;$$

	$(3)$ if there exists a valuation $\tilde\nu$ on $\mathcal{O}_o$ satisfying the above two statements and $\tilde\nu(f)\ge\nu(f)$ for any $(f,o)\in\mathcal{O}_o$, then $\tilde\nu=\nu$.
\end{Theorem}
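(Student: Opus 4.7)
My plan is to prove both directions by relating the valuation $\nu$ to a plurisubharmonic weight and invoking the Zhou weight formalism of \cite{BGMY-valuation}. The necessity direction verifies the three conditions directly from $\nu(f)=\sigma(\log|f|,\Phi_{o,\max})$; the sufficiency direction constructs a Zhou weight via Zorn's lemma and closes the loop using condition $(3)$.

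For necessity, fix a Zhou weight $\Phi_{o,\max}$ giving $\nu$. Condition $(1)$ follows from the lower bound $\Phi_{o,\max}\ge N\log|z|+O(1)$: for $f$ vanishing at $o$, $\log|f|\le\log|z|+O(1)\le\frac{1}{N}\Phi_{o,\max}+O(1)$ gives $\nu(f)\ge 1/N$, while units give $\nu(f)=0$ since $\log|f|$ is bounded but $\Phi_{o,\max}\to-\infty$. For $(2)$, the equivalence $\nu(f)\ge m\Leftrightarrow \log|f|\le m\Phi_{o,\max}+O(1)$ yields the pointwise bound $\frac{1}{m}\log|\mathfrak{a}^{\nu}_{m}|\le\Phi_{o,\max}+O(1)$, so a containment $\mathfrak{q}_{0}\subset\mathcal{I}(\frac{1}{m}\log|\mathfrak{a}^{\nu}_{m}|)_{o}$ would make $|\mathfrak{q}_{0}|^{2}e^{-2\Phi_{o,\max}}$ integrable, contradicting Definition \ref{def:max_relat}$(2)$. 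For $(3)$, given $\tilde\nu\ge\nu$ satisfying $(1)$ and $(2)$, apply Zorn's lemma to produce a maximal $\nu^{*}\ge\tilde\nu\ge\nu$ among valuations satisfying $(1)$ and $(2)$; such a $\nu^{*}$ automatically satisfies $(3)$, hence by sufficiency is a Zhou valuation with weight $\Phi^{*}$, and Zhou weight maximality (Definition \ref{def:max_relat}$(3)$) applied to $\Phi^{*}\ge\Phi_{o,\max}+O(1)$ forces $\Phi^{*}=\Phi_{o,\max}+O(1)$, whence $\tilde\nu=\nu^{*}=\nu$.

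For sufficiency, assume $\nu$ satisfies $(1)$--$(3)$ and apply Zorn's lemma to the set $S$ of plurisubharmonic $\varphi\ge N\log|z|+O(1)$ for which $|\mathfrak{q}_{0}|^{2}e^{-2\varphi}$ is non-integrable and $\sigma(\log|f|,\varphi)\ge\nu(f)$ for all $(f,o)\in\mathcal{O}_{o}$, ordered by $\varphi'\ge\varphi+O(1)$. A maximal element $\Phi_\nu$ satisfies Zhou weight axioms $(1)$--$(3)$, and $\tilde\nu(f):=\sigma(\log|f|,\Phi_\nu)$ defines a Zhou valuation with $\tilde\nu\ge\nu$. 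To invoke condition $(3)$ of the theorem and conclude $\tilde\nu=\nu$, I verify that $\tilde\nu$ itself satisfies $(1)$ and $(2)$: the former from the lower bound $\Phi_\nu\ge N\log|z|+O(1)$, the latter from the inclusion $\mathfrak{a}^{\tilde\nu}_{m}\subset\mathfrak{a}^{\nu}_{m}$, which yields $\mathcal{I}(\frac{1}{m}\log|\mathfrak{a}^{\tilde\nu}_{m}|)_{o}\subset\mathcal{I}(\frac{1}{m}\log|\mathfrak{a}^{\nu}_{m}|)_{o}$, so condition $(2)$ for $\tilde\nu$ is implied by condition $(2)$ for $\nu$.

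The main obstacle will be the Zorn's lemma setup for sufficiency: showing that $S$ is non-empty and closed under ascending chains. Non-emptiness can be approached by taking an appropriate upper envelope of the functions $\frac{1}{m}\log|\mathfrak{a}^{\nu}_{m}|$ (possibly combined with $N\log|z|$) and checking $\sigma(\log|f|,\cdot)\ge\nu(f)$ via Demailly's multiplier ideal approximation; non-integrability for this initial element follows from condition $(2)$ of $\nu$. Closure under chains is the most delicate point: an increasing chain $(\varphi_\alpha)$ in $S$ has upper semicontinuous regularization $\varphi$ plurisubharmonic, but showing that $|\mathfrak{q}_{0}|^{2}e^{-2\varphi}$ remains non-integrable relies on the strong openness property $\mathcal{I}(\psi)=\mathcal{I}_+(\psi)$ from \cite{GZopen-c} used essentially as in the proofs of \cite{BGMY-valuation}.
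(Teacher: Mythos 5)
Your overall architecture (necessity of $(1)$ and $(2)$ from the two-sided bound on $\Phi_{o,\max}$ and the non-integrability of $|\mathfrak{q}_0|^2e^{-2\Phi_{o,\max}}$; sufficiency by building a psh weight dominating the envelope attached to $\nu$ and then invoking condition $(3)$) matches the paper's strategy, but two steps as written do not go through.

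First, in the sufficiency direction you verify condition $(2)$ for the constructed Zhou valuation $\tilde\nu$ by claiming $\mathfrak{a}^{\tilde\nu}_{m}\subset\mathfrak{a}^{\nu}_{m}$. This inclusion is backwards: since $\tilde\nu\ge\nu$, we have $\mathfrak{a}^{\nu}_{m}\subseteq\mathfrak{a}^{\tilde\nu}_{m}$, hence $\mathcal{I}\bigl(\frac{1}{m}\log|\mathfrak{a}^{\nu}_{m}|\bigr)_o\subseteq\mathcal{I}\bigl(\frac{1}{m}\log|\mathfrak{a}^{\tilde\nu}_{m}|\bigr)_o$, and non-containment of $\mathfrak{q}_0$ in the \emph{smaller} ideal does not imply non-containment in the larger one. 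The implication you need goes the other way. Condition $(2)$ for $\tilde\nu$ does hold, but the correct justification is the one used for necessity: $\log|\mathfrak{a}^{\tilde\nu}_{m}|\le m\Phi_{\nu}+O(1)$ together with the non-integrability of $|\mathfrak{q}_0|^2e^{-2\Phi_{\nu}}$ (i.e., one applies the already-proved necessity of $(1)$ and $(2)$ to the Zhou valuation $\tilde\nu$). You should repair this step.

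Second, your proof of the necessity of $(3)$ routes through a Zorn's lemma argument on the poset of \emph{valuations} $\ge\tilde\nu$ satisfying $(1)$ and $(2)$, and the chain condition there is precisely where all the analytic difficulty sits: for an increasing chain $(\nu_\alpha)$ one must show that $\sup_\alpha\nu_\alpha$ is finite-valued and still satisfies $(2)$, which is a strong-openness type statement (of the kind handled by Proposition \ref{p:effect_GZ} and Remark \ref{rem:effect_GZ} at the level of psh functions) that you leave entirely unaddressed; moreover your argument makes the necessity of $(3)$ depend on the full sufficiency direction and on Lemma \ref{cor-approximation} for the new weight $\Phi^*$. The paper avoids all of this: given $\tilde\nu\ge\nu$ satisfying $(1)$ and $(2)$, it applies Lemma \ref{l:valuation--psh} directly to $\tilde\nu$ to produce $\varphi_{\tilde\nu}$ with $\log|f|\le\tilde\nu(f)\varphi_{\tilde\nu}$ and $|\mathfrak{q}_0|^2e^{-2\varphi_{\tilde\nu}}$ non-integrable, observes via the envelope formula of Lemma \ref{cor-approximation} and $\tilde\nu\ge\nu$ that $\varphi_{\tilde\nu}\ge\Phi^{D}_{o,\max}$, concludes $\varphi_{\tilde\nu}=\Phi^{D}_{o,\max}$ by maximality of the global Zhou weight, and then reads off $\tilde\nu(f)\le\sigma(\log|f|,\Phi_{o,\max})=\nu(f)$ (first for bounded holomorphic $f$, then for all germs by Lemma \ref{l:1029_1}). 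Unless you supply the chain-closure argument for valuations, your version of this step is a genuine gap; I recommend replacing it with the direct argument.
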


	If a valuation $\nu$ on $\mathcal{O}_o$  is a Zhou valuation related to $|\mathfrak{q}_0|^2$, its corresponding Zhou weight is given by (\cite{BGMY-valuation}, see also Lemma \ref{cor-approximation})
	\[\Phi_{o,\max}(w)=\sup\left\{\frac{\log|f(w)|}{\nu(f)} : f\in \mathcal{O}(D), \ \sup_D|f|\le 1, \ f(o)=0, \  f\not\equiv 0\right\},\]
	where $D$ is the unit ball in $\mathbb{C}^n$.

The following proposition shows the existence of Zhou valuations with some specific lower bounds.

\begin{Proposition}\label{c:exist}
Let $\nu$ be a  valuation on $\mathcal{O}_o$. Assume that $\nu$ satisfies the statements $(1)$ and $(2)$ in Theorem \ref{the:char}. Then there exists a Zhou valuation $\nu^{\mathrm{Zh}}$ related to $|\mathfrak{q}_0|^2$ such that $\nu^{\mathrm{Zh}}\ge\nu$.
\end{Proposition}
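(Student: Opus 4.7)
The plan is to obtain $\nu^{\mathrm{Zh}}$ via Zorn's lemma applied to the partially ordered set
\[
P := \{\tilde\nu \text{ valuation on } \mathcal{O}_o : \tilde\nu \ge \nu \text{ and } \tilde\nu \text{ satisfies statements (1), (2) of Theorem \ref{the:char}}\},
\]
ordered by pointwise $\le$, and then to invoke Theorem \ref{the:char} itself to identify a maximal element as a Zhou valuation. By hypothesis $\nu \in P$, so $P$ is nonempty. A maximal element $\nu^{\mathrm{Zh}}$ of $P$ automatically satisfies condition (3) of Theorem \ref{the:char}: if $\tilde\nu$ satisfies (1), (2), and $\tilde\nu \ge \nu^{\mathrm{Zh}}$, then in particular $\tilde\nu \ge \nu$, so $\tilde\nu \in P$, and by maximality $\tilde\nu = \nu^{\mathrm{Zh}}$. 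Hence, by Theorem \ref{the:char}, $\nu^{\mathrm{Zh}}$ is a Zhou valuation related to $|\mathfrak{q}_0|^2$ with $\nu^{\mathrm{Zh}} \ge \nu$, which is the desired conclusion.

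To verify Zorn's hypothesis, I would first extract a uniform upper bound on valuations in $P$ from condition (2). Fix $(f,o) \in \mathcal{O}_o^*$ and $\tilde\nu \in P$, and suppose $\tilde\nu(f) \ge K$ for a positive integer $K$. Then $f \in \mathfrak{a}_K^{\tilde\nu}$, so $|f| \le C|\mathfrak{a}_K^{\tilde\nu}|$ near $o$ for some constant $C$, whence
\[
\frac{|\mathfrak{q}_0|^2}{|f|^{2/K}} \;\ge\; C^{-2/K}\,\frac{|\mathfrak{q}_0|^2}{|\mathfrak{a}_K^{\tilde\nu}|^{2/K}}.
\]
Condition (2) forces the right-hand side to be non-integrable near $o$; therefore neither is the left-hand side, which gives $1/K \ge c_o^{\mathfrak{q}_0}(\log|f|)$. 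Setting $K_0(f) := 1/c_o^{\mathfrak{q}_0}(\log|f|)$, which is finite since $|\mathfrak{q}_0|^2|f|^{-2c}$ is $L^1$ near $o$ for $c>0$ below the positive log canonical threshold of $f$, one obtains the uniform bound $\tilde\nu(f) < K_0(f)+1$ (depending only on $f$) for every $\tilde\nu \in P$.

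With this bound in hand, for a chain $\{\nu_\alpha\}$ in $P$ the pointwise supremum $\nu_*(f) := \sup_\alpha \nu_\alpha(f)$ is finite, and a standard cofinality argument promotes $\nu_*$ to a valuation with $\nu_* \ge \nu$. Condition (1) for $\nu_*$ is immediate from $\{\nu_* > 0\} \supset \{\nu > 0\} = \mathfrak{m}$ together with the automatic vanishing of $\nu_*$ on units. The delicate point is condition (2) for $\nu_*$: by Noetherianness the ideals $\mathfrak{a}_m^{\nu_\alpha}$ stabilize to $\mathfrak{A}_m := \mathfrak{a}_m^{\nu_{\alpha_m}}$ for some $\alpha_m$, but $\mathfrak{a}_m^{\nu_*}$ may strictly contain $\mathfrak{A}_m$ in the ``boundary'' case where an element $f$ has $\nu_*(f) = m$ while $\nu_\alpha(f) < m$ for every $\alpha$. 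Any such $f$ satisfies $\nu_\alpha(f) \nearrow m$, hence $f \in \mathfrak{A}_{m-1}$ for large $\alpha$ and $|f| \lesssim |\mathfrak{A}_{m-1}|$. Combining this norm estimate with condition (2) at neighbouring levels and the Guan--Zhou strong openness $\mathcal{I}(\psi) = \bigcup_{p>1}\mathcal{I}(p\psi)$ should transfer (2) to $\nu_*$ and yield $\nu_* \in P$ as an upper bound for the chain. Carrying out this transfer of condition (2) to the supremum is the principal technical obstacle I expect.
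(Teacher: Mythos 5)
Your overall strategy (Zorn's lemma on the set $P$ of valuations $\ge\nu$ satisfying (1) and (2), followed by an appeal to Theorem \ref{the:char} for a maximal element) is a genuinely different route from the paper's, and the parts you carry out are sound: the uniform bound $\tilde\nu(f)<K_0(f)+1$ via condition (2), the fact that the pointwise supremum of a chain is again a valuation, and the observation that a maximal element of $P$ automatically satisfies condition (3). However, the step you yourself flag --- transferring condition (2) to the supremum $\nu_*$ of a chain --- is a genuine, unresolved gap, and it is precisely where the analytic content of the proposition lives. The difficulty is that $\mathfrak{a}_m^{\nu_*}$ may strictly contain $\bigcup_\alpha\mathfrak{a}_m^{\nu_\alpha}$, so $|\mathfrak{a}_m^{\nu_*}|^{-2/m}$ is smaller than the quantities controlled by condition (2) for the $\nu_\alpha$, and non-integrability does not pass to the limit for free. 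Your sketch via ``$f\in\mathfrak{A}_{m-1}$ and neighbouring levels'' is too coarse to close it: what one can show is $\mathfrak{a}_m^{\nu_*}\subset\mathfrak{a}_{m-\varepsilon}^{\nu_\alpha}$ for every $\varepsilon>0$ and suitable $\alpha$ (compare generators), and then, assuming $|\mathfrak{q}_0|^2|\mathfrak{a}_m^{\nu_*}|^{-2/m}$ were integrable, strong openness yields integrability of $|\mathfrak{q}_0|^2|\mathfrak{a}_m^{\nu_*}|^{-2p/m}$ for \emph{some} $p>1$; to reach a contradiction with (2) one must take $\varepsilon$ so small that $p\ge m/(m-\varepsilon)$, and one must further extend condition (2) from integer levels to real levels $m-\varepsilon$ (again via powers of ideals and strong openness). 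Fixing $\varepsilon=1$, i.e.\ working at level $m-1$, fails because the $p>1$ supplied by strong openness need not exceed $m/(m-1)$. So the argument is completable, but only with a quantitative use of the Guan--Zhou effectiveness results (Proposition \ref{p:effect_GZ}, Remark \ref{rem:effect_GZ}) that your proposal does not supply.

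For comparison, the paper avoids Zorn's lemma on valuations entirely: from conditions (1) and (2) it builds a negative plurisubharmonic function $\varphi_\nu$ with $\log|f|\le\nu(f)\varphi_\nu$ and $|\mathfrak{q}_0|^2e^{-2\varphi_\nu}$ non-integrable (Lemma \ref{l:valuation--psh}), invokes the existence of a global Zhou weight $\Phi^D_{o,\max}\ge\varphi_\nu$ (Lemma \ref{r:exists}), and concludes $\sigma(\log|f|,\Phi^D_{o,\max})\ge\nu(f)$ for all germs via Lemma \ref{l:1029_1}. The chain-limit difficulty you encounter is thereby packaged into the existence of Zhou weights, where increasing sequences of plurisubharmonic functions are handled directly by Proposition \ref{p:effect_GZ} and Remark \ref{rem:effect_GZ}. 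In effect your proof reproduces that maximization at the level of valuations, where the limit step is strictly harder to justify; as it stands, the proposal is incomplete at that step.
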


\subsection{Denseness of the Zhou valuations}

A valuation $\nu$ on $\mathcal{O}_o$ is called a \emph{centered valuation}, if $\nu(\mathfrak{m})>0$, where $\mathfrak{m}$ is the maximal ideal of $\mathcal{O}_o$. Denote the set of all centered valuations on $\mathcal{O}_o$ by $\mathrm{Val}_o$, which is a cone (i.e. for any $\lambda>0$ and $\nu\in\mathrm{Val}_o$, it holds that $\lambda \nu\in\mathrm{Val}_o$). There is a natural topology on $\mathrm{Val}_o$, which is the weakest topology such that the functional 
\begin{flalign*}
	\begin{split}
		f\colon\mathrm{Val}_o &\rightarrow \mathbb{R}_{\ge 0}\\
		\nu&\mapsto \nu(f)
	\end{split}
\end{flalign*}
is continuous for every $f\in\mathcal{O}_o^*$. This topology is equivalent to the weakest topology such that the functional
\begin{flalign*}
	\begin{split}
		\mathfrak{a}\colon\mathrm{Val}_o &\rightarrow \mathbb{R}_{\ge 0}\\
		\nu&\mapsto \nu(\mathfrak{a})
	\end{split}
\end{flalign*}
is continuous for every nontrivial ideal $\mathfrak{a}$ of $\mathcal{O}_o$ (see \cite{JM12}). One can see that
\[\mathcal{B}:=\left\{\bigcap_{i=1}^r V(\nu_i,\mathfrak{a}_i,n)\colon r,n\in\mathbb{Z}_{>0}\right\}\]
is a topological basis of $\mathrm{Val}_o$, where $\nu_i\in\mathrm{Val}_o$, $\mathfrak{a}_i$ is an ideal of $\mathcal{O}_0$, and
\[V(\nu_i,\mathfrak{a}_i,n):=\big\{\nu\in\mathrm{Val}_o\colon |\nu(\mathfrak{a}_i)-\nu_i(\mathfrak{a}_i)|<1/n\big\}\]
for each $i$.

Let $\nu\in\mathrm{Val}_o$, and we still adopt the notation
\[\mathscr{A}(\nu):=\sup\big(\nu(\mathfrak{a}_{\bullet})\mathrm{lct}^{\mathfrak{q}}(\mathfrak{a}_{\bullet})-\nu(\mathfrak{q})\big),\]
where the supremum is over all nonzero ideals $\mathfrak{q}$ and graded sequence $\mathfrak{a}_{\bullet}$ satisfying $\mathrm{lct}^{\mathfrak{q}}(\mathfrak{a}_{\bullet})<+\infty$. It still holds that $\mathscr{A}(\nu)\le A(\nu)$, where $A(\nu)$ is the log-discrepancy of the valuation $\nu$.

Denote by $\mathrm{ZVal}_o$ the set of all Zhou valuations on $\mathcal{O}_o$, which is a subcone of $\mathrm{Val}_o$ (see \cite{BGMY-valuation}).

\begin{Theorem}\label{thm.nu.inf.Zhou.val}
    Let $\nu\in\mathrm{Val}_o$. If $\mathscr{A}(\nu)<+\infty$, then
    \begin{equation}\label{eq-nu.q.inf.Zhou.val.q}
        \nu(\mathfrak{q})=\inf\big\{\hat{\nu}(\mathfrak{q}) : \hat{\nu}\ge\nu, \ \hat{\nu}\in \mathrm{ZVal}_o\big\},
    \end{equation}
    for any nonzero ideal $\mathfrak{q}$ of $\mathcal{O}_o$. 
\end{Theorem}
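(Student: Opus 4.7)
The inequality $\nu(\mathfrak{q}) \leq \inf\bigl\{\hat\nu(\mathfrak{q}) : \hat\nu \geq \nu,\ \hat\nu \in \mathrm{ZVal}_o\bigr\}$ is immediate since $\hat\nu \geq \nu$ forces $\hat\nu(\mathfrak{q}) \geq \nu(\mathfrak{q})$; the content of the theorem is the reverse bound. Fix $\epsilon > 0$ and pick $g \in \mathfrak{q}$ with $\nu(g) < \nu(\mathfrak{q}) + \epsilon$. It then suffices to exhibit a Zhou valuation $\hat\nu \in \mathrm{ZVal}_o$ with $\hat\nu \geq \nu$ and $\hat\nu(g) = \nu(g)$, for then $\hat\nu(\mathfrak{q}) \leq \hat\nu(g) = \nu(g) < \nu(\mathfrak{q}) + \epsilon$, and letting $\epsilon \downarrow 0$ concludes.

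The plan is to refine the Zorn's lemma construction underlying Proposition \ref{c:exist}. Consider the poset
\[P := \bigl\{\tilde\nu \in \mathrm{Val}_o : \tilde\nu \geq \nu,\ \tilde\nu(g) = \nu(g)\bigr\}\]
with the pointwise order. The poset $P$ is nonempty (it contains $\nu$), and every chain in $P$ admits the pointwise supremum as an upper bound: the supremum satisfies the valuation axioms in the limit (multiplicativity and the ultrametric-type inequality survive monotone limits along chains), still dominates $\nu$, and preserves the identity $\tilde\nu(g) = \nu(g)$. By Zorn's lemma, $P$ admits a maximal element $\hat\nu$, and the task reduces to proving that $\hat\nu$ is a Zhou valuation by checking the three conditions of Theorem \ref{the:char} for a suitable ideal $\mathfrak{q}_0$.

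Condition (1) is automatic, as $\hat\nu \geq \nu$ and $\nu$ is centered. For condition (2) the hypothesis $\mathscr{A}(\nu) < +\infty$ is decisive: by the definition of $\mathscr{A}(\nu)$ as a supremum, one has $\nu(\mathfrak{a}^\nu_\bullet)\cdot\mathrm{lct}^{\mathfrak{q}_0}(\mathfrak{a}^\nu_\bullet) \leq \mathscr{A}(\nu) + \nu(\mathfrak{q}_0)$ for every nonzero ideal $\mathfrak{q}_0$, so choosing $\mathfrak{q}_0$ with $\nu(\mathfrak{q}_0)$ sufficiently small relative to $\nu(\mathfrak{a}^\nu_\bullet) - \mathscr{A}(\nu)$ yields $\mathrm{lct}^{\mathfrak{q}_0}(\mathfrak{a}^\nu_\bullet) \leq 1$, which unravels to condition (2) for $\nu$. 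Moreover, $\hat\nu \geq \nu$ gives $\mathfrak{a}_m^{\hat\nu} \supset \mathfrak{a}_m^\nu$ and hence $\mathcal{I}\bigl(\tfrac{1}{m}\log|\mathfrak{a}_m^{\hat\nu}|\bigr)_o \subset \mathcal{I}\bigl(\tfrac{1}{m}\log|\mathfrak{a}_m^\nu|\bigr)_o$, so condition (2) transfers directly from $\nu$ to $\hat\nu$.

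The main obstacle is condition (3). Suppose, for contradiction, that some valuation $\tilde\nu$ strictly dominates $\hat\nu$ and satisfies (1)--(2). If $\tilde\nu(g) = \nu(g)$, then $\tilde\nu \in P$ contradicts the maximality of $\hat\nu$; the delicate subcase is $\tilde\nu(g) > \nu(g)$. I plan to handle it by calibrating $\mathfrak{q}_0$ to involve $g$ explicitly---for instance $\mathfrak{q}_0 = (g^k) \cdot \mathfrak{q}_0'$ for large $k$, with $\mathfrak{q}_0'$ tuned against $\mathscr{A}(\nu)$---so that any strict increase of the valuation at $g$ forces $\mathfrak{q}_0 \subset \mathcal{I}\bigl(\tfrac{1}{m}\log|\mathfrak{a}_m^{\tilde\nu}|\bigr)_o$ for some $m$, contradicting condition (2) for $\tilde\nu$. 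Once condition (3) is established, Theorem \ref{the:char} places $\hat\nu$ in $\mathrm{ZVal}_o$, and the bound $\hat\nu(\mathfrak{q}) \leq \hat\nu(g) = \nu(g) < \nu(\mathfrak{q}) + \epsilon$ completes the proof on letting $\epsilon \downarrow 0$.
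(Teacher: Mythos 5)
Your proposal diverges substantially from the paper's argument, and it contains a genuine gap that undermines the strategy. The paper does not try to attain the infimum: it sets $C_k^{\mathfrak{q}}:=\mathrm{lct}^{\mathfrak{q}^k}(\mathfrak{a}^{\nu}_{\bullet})$, applies Lemma \ref{lem-existence.hat.nu.ge.nu} to the rescaled valuation $\nu/C_k^{\mathfrak{q}}$ with the ideal $\mathfrak{q}^k$ to produce Zhou valuations $\hat\nu_k\ge\nu$ with $\hat\nu_k(\mathfrak{q})\le C_k^{\mathfrak{q}}/k$, and then uses concavity of the Tian function $t\mapsto\mathrm{lct}^{\mathfrak{q}^t}(\mathfrak{a}^{\nu}_{\bullet})$ together with $\mathrm{lct}^{\mathfrak{q}^k}(\mathfrak{a}^{\nu}_{\bullet})-k\nu(\mathfrak{q})\le\mathscr{A}(\nu)$ to get $C_k^{\mathfrak{q}}/k\to\nu(\mathfrak{q})$. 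You instead try to produce a single Zhou valuation $\hat\nu\ge\nu$ with $\hat\nu(g)=\nu(g)$ exactly, which is a strictly stronger claim than the theorem requires and which your argument does not establish.

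The decisive error is your claim that condition $(2)$ of Theorem \ref{the:char} ``transfers directly from $\nu$ to $\hat\nu$'' because $\mathfrak{a}_m^{\hat\nu}\supset\mathfrak{a}_m^{\nu}$ implies $\mathcal{I}\bigl(\tfrac{1}{m}\log|\mathfrak{a}_m^{\hat\nu}|\bigr)_o\subset\mathcal{I}\bigl(\tfrac{1}{m}\log|\mathfrak{a}_m^{\nu}|\bigr)_o$. The inclusion goes the other way: a larger ideal gives a larger weight $\log|\mathfrak{a}_m^{\hat\nu}|\ge\log|\mathfrak{a}_m^{\nu}|+O(1)$, hence a \emph{larger} multiplier ideal, so condition $(2)$ for $\nu$ does not pass to a dominating valuation. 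This is precisely why the theorem is nontrivial — enlarging a valuation can destroy the non-integrability condition — and it is why the paper must calibrate the ideal ($\mathfrak{q}^k$) and the scaling ($C_k^{\mathfrak{q}}$) simultaneously. Two further problems: the pointwise supremum of a chain in your poset $P$ need not take finite values away from $g$ (nothing in $P$ bounds $\tilde\nu(h)$ for $h$ not a power of $g$), so Zorn's lemma is not available as stated; and your verification of condition $(3)$ in the subcase $\tilde\nu(g)>\nu(g)$ is only announced (``I plan to handle it by calibrating $\mathfrak{q}_0$''), not carried out, and it is not clear that any choice of $\mathfrak{q}_0$ forces the required containment. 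As written, the proof does not go through; I would recommend the asymptotic route via $\mathrm{lct}^{\mathfrak{q}^k}$ rather than trying to realize the infimum.
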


Set
\[\mathrm{Val}_o^{<\infty}:=\mathrm{Val}_o\cap\{\nu : \mathscr{A}(\nu)<+\infty\}.\]
Theorem \ref{thm.nu.inf.Zhou.val} gives a denseness property of Zhou valuations.
\begin{Theorem}\label{thm-ZVal.dense}
    $\mathrm{ZVal}_o$ is dense in $\mathrm{Val}_o^{<\infty}$.
\end{Theorem}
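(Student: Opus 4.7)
The plan is to leverage Theorem~\ref{thm.nu.inf.Zhou.val} via a product-of-ideals reduction. Fix $\nu\in\mathrm{Val}_o^{<\infty}$ and an arbitrary basic open neighborhood of $\nu$,
$$U=\bigcap_{i=1}^{r}V(\nu,\mathfrak{a}_i,n)=\bigl\{\mu\in\mathrm{Val}_o\colon|\mu(\mathfrak{a}_i)-\nu(\mathfrak{a}_i)|<1/n,\ i=1,\dots,r\bigr\},$$
where $\mathfrak{a}_1,\dots,\mathfrak{a}_r$ are nonzero ideals of $\mathcal{O}_o$ and $n\in\mathbb{Z}_{>0}$. Since $\mathcal{B}$ is a topological basis and $\nu\in U$, it suffices to produce a Zhou valuation lying in $U$.

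The key observation is that valuations are additive on products of ideals: $\mu(\mathfrak{a}\mathfrak{b})=\mu(\mathfrak{a})+\mu(\mathfrak{b})$ for any valuation $\mu$ and any ideals $\mathfrak{a},\mathfrak{b}$ of the Noetherian ring $\mathcal{O}_o$. Indeed, the non-archimedean inequality forces $\mu(\mathfrak{a})=\min_j\mu(f_j)$ on any finite generating set $\{f_j\}$ of $\mathfrak{a}$ (and similarly for $\mathfrak{b}$); since $\{f_jg_k\}$ generates $\mathfrak{a}\mathfrak{b}$, one obtains $\mu(\mathfrak{a}\mathfrak{b})=\min_{j,k}(\mu(f_j)+\mu(g_k))=\mu(\mathfrak{a})+\mu(\mathfrak{b})$. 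Setting $\mathfrak{q}:=\mathfrak{a}_1\mathfrak{a}_2\cdots\mathfrak{a}_r$, I therefore get $\mu(\mathfrak{q})=\sum_{i=1}^{r}\mu(\mathfrak{a}_i)$ for every valuation $\mu$.

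Applying Theorem~\ref{thm.nu.inf.Zhou.val} to this single ideal $\mathfrak{q}$, I would pick $\hat\nu\in\mathrm{ZVal}_o$ with $\hat\nu\ge\nu$ and $\hat\nu(\mathfrak{q})<\nu(\mathfrak{q})+1/n$. The domination $\hat\nu\ge\nu$ forces $\hat\nu(\mathfrak{a}_i)\ge\nu(\mathfrak{a}_i)$ for every $i$, while the bound
$$\sum_{i=1}^{r}\hat\nu(\mathfrak{a}_i)=\hat\nu(\mathfrak{q})<\nu(\mathfrak{q})+1/n=\sum_{i=1}^{r}\nu(\mathfrak{a}_i)+1/n$$
implies $0\le\hat\nu(\mathfrak{a}_i)-\nu(\mathfrak{a}_i)<1/n$ for every individual $i$. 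Hence $\hat\nu\in V(\nu,\mathfrak{a}_i,n)$ for all $i$, so $\hat\nu\in U$, as desired.

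Where is the real difficulty? All the analytic and constructive content has been absorbed into Theorem~\ref{thm.nu.inf.Zhou.val}; once that is in hand, the remaining step is the elementary ``product trick'' that bundles the finitely many ideals $\mathfrak{a}_i$ into a single $\mathfrak{q}$, turning a one-ideal infimum statement into simultaneous approximation across the defining ideals of the neighborhood. Thus the density statement is a short formal consequence of Theorem~\ref{thm.nu.inf.Zhou.val}, with no further obstacle of substance to overcome.
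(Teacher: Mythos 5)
Your proof is correct and follows essentially the same route as the paper: reduce simultaneous approximation on $\mathfrak{a}_1,\dots,\mathfrak{a}_r$ to the single product ideal $\mathfrak{q}=\mathfrak{a}_1\cdots\mathfrak{a}_r$, invoke Theorem \ref{thm.nu.inf.Zhou.val} to get $\hat\nu\in\mathrm{ZVal}_o$ with $\hat\nu\ge\nu$ and $\hat\nu(\mathfrak{q})$ close to $\nu(\mathfrak{q})$, and then split the estimate back over the factors using $\hat\nu\ge\nu$. If anything, your additive bookkeeping $\hat\nu(\mathfrak{q})=\sum_i\hat\nu(\mathfrak{a}_i)$ is the correct form of the final step, whereas the paper's displayed computation writes $\hat\nu(\mathfrak{q})=\prod_j\hat\nu(\mathfrak{q}_j)$ (a product where a sum is meant), so your version is actually the cleaner one.
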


Note that $\mathscr{A}(\nu)= A(\nu)<+\infty$ holds for a quasimonomial valuation $\nu$, and the space of quasimonomial valuations is dense in $\mathrm{Val}_o$ (see \cite{JM12}). Consequently, we can deduce that

\begin{Corollary}
	$\mathrm{ZVal}_o$ is dense in $\mathrm{Val}_o$.
\end{Corollary}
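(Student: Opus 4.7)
The plan is to chain Theorem \ref{thm-ZVal.dense} with the known denseness of quasimonomial valuations via transitivity of topological density. Theorem \ref{thm-ZVal.dense} already gives that $\mathrm{ZVal}_o$ is dense in $\mathrm{Val}_o^{<\infty}$, so the only remaining task is to prove that $\mathrm{Val}_o^{<\infty}$ itself is dense in $\mathrm{Val}_o$; once that intermediate denseness is in hand, the corollary falls out of $\overline{\mathrm{ZVal}_o}\supseteq\overline{\mathrm{Val}_o^{<\infty}}=\mathrm{Val}_o$.

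For the intermediate step I would invoke the remark made just before the corollary: for every quasimonomial valuation $\nu$ one has $\mathscr{A}(\nu)=A(\nu)<+\infty$, so every quasimonomial centered valuation belongs to $\mathrm{Val}_o^{<\infty}$. Since the subcone of quasimonomial centered valuations is already known to be dense in $\mathrm{Val}_o$ by the classical Jonsson--Musta\c{t}\u{a} result in \cite{JM12}, its superset $\mathrm{Val}_o^{<\infty}$ is \emph{a fortiori} dense in $\mathrm{Val}_o$. This is the only input beyond Theorem \ref{thm-ZVal.dense} that one really needs.

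Composing the two density statements then yields the corollary immediately. Concretely, given any $\mu\in\mathrm{Val}_o$ and any basic open neighborhood $U=\bigcap_{i=1}^r V(\mu,\mathfrak{a}_i,n)$ of $\mu$, the density of $\mathrm{Val}_o^{<\infty}$ in $\mathrm{Val}_o$ produces some $\nu\in U\cap\mathrm{Val}_o^{<\infty}$, and then Theorem \ref{thm-ZVal.dense} produces a Zhou valuation inside the (open, by continuity of $\hat\nu\mapsto\hat\nu(\mathfrak{a}_i)$) neighborhood $U\cap\mathrm{Val}_o^{<\infty}$ of $\nu$ in the subspace topology, which also sits in $U$.

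There is essentially no analytic obstacle here: all of the substantive work has been packaged into Theorem \ref{thm-ZVal.dense} and the cited Jonsson--Musta\c{t}\u{a} denseness. The one point worth checking explicitly is that the topology on $\mathrm{Val}_o^{<\infty}$ in which Theorem \ref{thm-ZVal.dense} asserts density is the subspace topology inherited from $\mathrm{Val}_o$, so that the two density statements may legitimately be composed; this is immediate from the uniform definition of the basic open sets $V(\nu_i,\mathfrak{a}_i,n)$ on the ambient space.
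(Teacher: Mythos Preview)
Your proposal is correct and follows exactly the approach the paper indicates: the corollary is deduced from Theorem \ref{thm-ZVal.dense} together with the remark immediately preceding it that quasimonomial valuations satisfy $\mathscr{A}(\nu)=A(\nu)<+\infty$ and are dense in $\mathrm{Val}_o$ by \cite{JM12}. The paper leaves the transitivity argument implicit, whereas you have spelled it out carefully, but the content is identical.
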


\section{Preparations}

In this section, we do some preparations for the proofs of the theorems.

\subsection{Some results about plurisubharmonic functions}

In this section, we recall some results about plurisubharmonic functions, which will be used in future discussions.

Let $f$ be a holomorphic function near $o$, and let $\phi$  be a plurisubharmonic function near $o$. 
Recall that (see \cite{GZopen-effect,guan-effect}) 
\[C_{f,\phi}(U):=\inf\left\{\int_{U} |\tilde{f}|^{2} :(\tilde{f}-f,o)\in \mathcal{I}(\phi)_{o} \ \& \ \tilde{f}\in\mathcal{O}(U)\right\},\]
where $U\subseteq \Delta^{n}$ is a domain with $o\in U$.

Let $\{\phi_{m}\}_{m\in\mathbb{N}^{+}}$ be a sequence of negative plurisubharmonic functions on $\Delta^{n}$,
which is convergent to a negative Lebesgue measurable function $\phi$ on $\Delta^{n}$ in Lebesgue measure.

In \cite{GZopen-effect}, Guan-Zhou presented the following lower semicontinuity property of
plurisubharmonic functions with a multiplier.

\begin{Proposition}[\cite{GZopen-effect}]
	\label{p:effect_GZ}
	Let $f$ be a holomorphic function near $o$, and let $\varphi_0$ be a plurisubharmonic function near $o$.
	Assume that for any small enough neighborhood $U$ of $o$,
	the pairs $(f,\phi_{m})$ $(m\in\mathbb{N}^+)$ satisfies
	\begin{equation}
		\label{equ:A}
		\inf_{m}C_{f,\varphi_0+\phi_{m}}(U)>0.
	\end{equation}
	Then $|f|^{2}e^{-2\varphi_{0}}e^{-2\phi}$ is not integrable near $o$.
\end{Proposition}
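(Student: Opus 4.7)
My plan is to prove the contrapositive: assume $|f|^{2}e^{-2\varphi_{0}}e^{-2\phi}$ is integrable on some neighborhood $U_{0}\ni o$, and show that $\inf_{m}C_{f,\varphi_{0}+\phi_{m}}(U)=0$ on every sufficiently small ball $U\subset U_{0}$. By monotonicity of $C_{f,\psi}(U)$ in $U$, it suffices to find, for one fixed small $U$, a subsequence and holomorphic functions $\tilde{f}_{m}\in\mathcal{O}(U)$ satisfying $(\tilde{f}_{m}-f,o)\in\mathcal{I}(\varphi_{0}+\phi_{m})_{o}$ and $\int_{U}|\tilde{f}_{m}|^{2}\to 0$. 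A preliminary reduction is to extract, using $\phi_{m}\to\phi$ in measure and $\phi_{m}\le 0$, an almost everywhere convergent subsequence (still indexed by $m$); together with standard psh compactness on compacts away from $o$, this gives $e^{-2\phi_{m}}\to e^{-2\phi}$ a.e.\ and uniform control enabling Fatou and dominated convergence.

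For the construction of $\tilde{f}_{m}$, I would apply H\"ormander's $L^{2}$ estimate (or equivalently the Ohsawa-Takegoshi extension theorem in Guan-Zhou's sharp form) with plurisubharmonic weight $\varphi_{0}+\phi_{m}+\delta|z|^{2}$. Choosing a smooth cut-off $\chi$ with $\chi\equiv 1$ on $U$ and compactly supported in $U_{0}$, one solves $\bar\partial u_{m}=f\,\bar\partial\chi$ on $U_{0}$ and obtains
\[
\int_{U_{0}}|u_{m}|^{2}\,e^{-2\varphi_{0}-2\phi_{m}}\le C\int_{\mathrm{supp}(\bar\partial\chi)}|f|^{2}\,e^{-2\varphi_{0}-2\phi_{m}}.
\]
Since $\mathrm{supp}(\bar\partial\chi)\subset U_{0}\setminus U$ sits away from $o$, where the $\phi_{m}$ are uniformly bounded, the right-hand side converges by dominated convergence to $C\int_{\mathrm{supp}(\bar\partial\chi)}|f|^{2}e^{-2\varphi_{0}-2\phi}<+\infty$. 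Setting $\tilde{f}_{m}:=\chi f-u_{m}$ produces a function holomorphic on $U$ (where $\chi\equiv 1$, so $\tilde{f}_{m}=f-u_{m}$), and the weighted $L^{2}$-finiteness of $u_{m}$ near $o$ forces $(\tilde{f}_{m}-f,o)=(-u_{m},o)\in\mathcal{I}(\varphi_{0}+\phi_{m})_{o}$, as required for admissibility in $C_{f,\varphi_{0}+\phi_{m}}(U)$.

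The main obstacle is upgrading boundedness of $\int_{U}|\tilde{f}_{m}|^{2}$ to actual vanishing. By Bergman/Montel, the extremal minimizers of $C_{f,\varphi_{0}+\phi_{m}}(U)$ admit a subsequence converging in $\mathcal{O}(U)$ uniformly on compacts to some $\tilde{f}_{\infty}\in\mathcal{O}(U)$; combining this uniform convergence of $u_{m}=f-\tilde{f}_{m}$ with the a.e.\ convergence $\phi_{m}\to\phi$ and Fatou in the weight $e^{-2\varphi_{0}-2\phi_{m}}$ yields $|u_{\infty}|^{2}e^{-2\varphi_{0}-2\phi}$ integrable near $o$, where $u_{\infty}:=f-\tilde{f}_{\infty}$. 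The hypothesis that $|f|^{2}e^{-2\varphi_{0}-2\phi}$ is integrable, combined with the strong openness theorem applied to a plurisubharmonic majorant of $\phi$, would then force the extremal $\tilde{f}_{\infty}$ to vanish, pushing $\|\tilde{f}_{m}\|_{L^{2}(U)}$ to zero. The delicate technical point is precisely that $\phi$ is only measurable, so strong openness for $\mathcal{I}(\varphi_{0}+\phi_{m})$ must be coupled with a careful psh majorant of $\phi$ to close the limit argument.
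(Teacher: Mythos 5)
The paper does not actually prove this proposition; it is quoted verbatim from \cite{GZopen-effect}, so there is no internal argument to compare yours with. Judged on its own, your contrapositive strategy is the right starting point, but both of the key steps in your outline have genuine gaps.

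First, the $\bar\partial$-construction with a spatial cut-off $\chi$ supported away from $o$ does not give usable bounds. The resulting right-hand side $\int_{\mathrm{supp}\,\bar\partial\chi}|f|^{2}e^{-2\varphi_{0}-2\phi_{m}}$ is not controlled: the $\phi_{m}$ are only negative plurisubharmonic functions converging in measure, so they may have poles on $\mathrm{supp}\,\bar\partial\chi$, and these integrals can be infinite for every $m$. The ``uniform control away from $o$'' you invoke is not available — compactness for psh functions gives uniform \emph{upper} bounds, whereas dominating $e^{-2\phi_{m}}$ requires lower bounds. Moreover, even when finite, this bound stays bounded away from $0$, while you need $C_{f,\varphi_{0}+\phi_{m}}(U)$ to become arbitrarily small. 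The device that makes the argument work in \cite{GZopen-effect} is to cut off in the sublevel sets of the weight itself, solving $\bar\partial u_{m}=f\,\bar\partial\bigl(b(\phi_{m})\bigr)$ with $b$ supported in $\{\log t_{1}<\phi_{m}<\log t_{0}\}$, where $e^{-2\phi_{m}}$ is bounded by construction. This yields a quantitative inequality of the shape $C_{f,\varphi_{0}+\phi_{m}}(U)\le \theta\, t^{-2}\int_{\{\phi_{m}<\log t\}\cap U}|f|^{2}e^{-2\varphi_{0}}$ (the effectiveness theorem, which is the real content here). Combined with convergence in measure to pass from $\{\phi_{m}<\log t\}$ to $\{\phi\le\log t\}$, and with $e^{-2\phi}\ge t^{-2}$ on that set, integrability of $|f|^{2}e^{-2\varphi_{0}-2\phi}$ forces the right-hand side to be $o(1)$ as $t\to0$, hence $\inf_{m}C_{f,\varphi_{0}+\phi_{m}}(U)=0$.

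Second, your closing step — extracting a limit $\tilde f_{\infty}$ of extremal minimizers and invoking strong openness for a psh majorant of $\phi$ to force $\tilde f_{\infty}=0$ — does not close. The memberships $(\tilde f_{m}-f,o)\in\mathcal{I}(\varphi_{0}+\phi_{m})_{o}$ carry no uniform bound on $\int|\tilde f_{m}-f|^{2}e^{-2\varphi_{0}-2\phi_{m}}$ near $o$ (the neighborhood of integrability may shrink with $m$), so Fatou does not place $\tilde f_{\infty}-f$ in any limiting ideal. Even granting that, together with the assumed integrability of $|f|^{2}e^{-2\varphi_{0}-2\phi}$ you would only conclude that $|\tilde f_{\infty}|^{2}e^{-2\varphi_{0}-2\phi}$ is integrable near $o$, which does not force $\tilde f_{\infty}=0$ (consider $\phi$ bounded). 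Finally, since $\phi$ is merely measurable, ``strong openness applied to a psh majorant of $\phi$'' is not a well-defined operation relating to the ideals $\mathcal{I}(\varphi_{0}+\phi_{m})_{o}$. The missing idea is precisely the quantitative effectiveness inequality above; a purely qualitative limiting argument cannot produce the required vanishing of the extremal quantities.
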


The Noetherian property of multiplier ideal sheaves (see \cite{demailly-book}) shows that
\begin{Remark}
	\label{rem:effect_GZ}
	Assume that
	
	$(1)$ $\phi_{m+1}\geq\phi_{m}$ holds for any $m$;
	
	$(2)$ $|f|^{2}e^{-2\varphi_{0}}e^{-2\phi_{m}}$ is not integrable near $o$ for any $m$.
	
	Then inequality \eqref{equ:A} holds.
\end{Remark}

Proposition \ref{p:effect_GZ} and Remark \ref{rem:effect_GZ} imply the strong openness property $\mathcal{I}(\varphi)=\mathcal{I}_+(\varphi)$ of plurisubharmonic functions $\varphi$.

Recall that for any plurisubharmonic function $\varphi$ near $o$,  the Lelong number $\nu(\varphi,o):=\sup\{c\ge0:\varphi\le c\log|z|$ near $o\}$ (see \cite{demailly-book}). We recall the following famous result due to Skoda.

\begin{Lemma}[\cite{skoda1972}]
	\label{l:skoda72}
	For any plurisubharmonic function $\varphi$ near $o$, if the Lelong number $\nu(\varphi,o)<1$, then $e^{-2\varphi}$ is integrable near $o$.
\end{Lemma}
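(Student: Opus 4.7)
The plan is to bootstrap from the one-dimensional case via the Ohsawa-Takegoshi $L^{2}$ extension theorem, which is the cleanest route to the sharp constant $\nu<1$. After translating $o$ to the origin, I may assume $\varphi$ is plurisubharmonic on a ball $B\subset\mathbb{C}^{n}$ around $0$ with $\nu:=\nu(\varphi,0)<1$.

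First I would establish the $n=1$ case directly. For a subharmonic function $v$ on a disk with $\nu(v,0)=\nu<1$, the Riesz representation gives $v(\zeta)=\int\log|\zeta-w|\,d\mu(w)+h(\zeta)$ with $\mu=\frac{1}{2\pi}\Delta v$ and $h$ harmonic. Splitting $\mu=\nu\delta_{0}+\mu'$, where $\mu'$ has no atom at $0$, yields $v(\zeta)=\nu\log|\zeta|+\tilde v(\zeta)$ with $\nu(\tilde v,0)=0$. The key estimate is that $e^{-2\tilde v}\in L^{p}_{\mathrm{loc}}$ for every $p>0$: this follows from decomposing the potential of $\mu'$ into ``near'' ($|w|<\delta$) and ``far'' contributions, and using that $\mu'(\{|w|<\delta\})\to 0$ as $\delta\to 0$. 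H\"older's inequality combined with the elementary integrability $|\zeta|^{-2\nu}\in L^{p}_{\mathrm{loc}}$ for some $p>1$ (possible precisely because $\nu<1$) then gives $\int e^{-2v}\,dA<\infty$ on a small disk around $0$.

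Second, for $n\ge 2$ I would reduce to dimension one by slicing. For a generic complex line $L$ through $0$, the slicing theorem for Lelong numbers gives $\nu(\varphi|_{L},0)\le\nu(\varphi,0)<1$, and the one-dimensional case applied to $\varphi|_{L}$ yields $\int_{L\cap B'}e^{-2\varphi|_{L}}\,dA<\infty$ on a small ball $B'\subset B$. Applying the Ohsawa-Takegoshi $L^{2}$ extension theorem to the pair $(B',L\cap B')$ with weight $\varphi$, the constant function $1$ on $L\cap B'$ extends to a holomorphic $F$ on $B'$ satisfying $F|_{L}\equiv 1$ and
\[\int_{B'}|F|^{2}e^{-2\varphi}\,dV\;\le\;C\int_{L\cap B'}e^{-2\varphi|_{L}}\,dA\;<\;\infty.\]
Since $F(0)=1$, continuity forces $|F|^{2}\ge 1/2$ on a small neighborhood $U\ni 0$, whence $\int_{U}e^{-2\varphi}\,dV\le 2\int_{U}|F|^{2}e^{-2\varphi}\,dV<\infty$, completing the proof.

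The main obstacle is the one-dimensional estimate $e^{-2\tilde v}\in L^{p}_{\mathrm{loc}}$ for all $p>0$: it requires nontrivial potential-theoretic control on how the residual Riesz measure $\mu'$ can accumulate at $0$, and is the only place where real analysis (beyond formal manipulations) is needed. Once the 1D case is settled, the higher-dimensional conclusion is essentially a single application of Ohsawa-Takegoshi.
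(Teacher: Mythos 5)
The paper does not prove this lemma at all: it is recalled as a classical result with only the citation to Skoda's 1972 paper, so there is no in-paper argument to compare against. Your proof is correct and is one of the standard modern routes to the sharp threshold $\nu<1$: the one-dimensional case via the Riesz decomposition, then $L^{2}$ extension. Two points deserve care. First, in dimension one the assertion $e^{-2\tilde v}\in L^{p}_{\mathrm{loc}}$ for all $p$ is exactly where the work is, and the mechanism you should make explicit is Jensen's convexity inequality applied to the normalized measure $\mu'|_{D_\delta}/\mu'(D_\delta)$: with $m=\mu'(D_\delta)$ one gets $\exp\bigl(-2p\int_{D_\delta}\log|\zeta-w|\,d\mu'(w)\bigr)\le \frac{1}{m}\int_{D_\delta}|\zeta-w|^{-2pm}\,d\mu'(w)$, which after Fubini is finite precisely when $2pm<2$; since $\mu'$ has no atom at $0$ you can force $m<1/p$ by shrinking $\delta$, and the far part of the potential is bounded near $0$. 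Second, the slicing statement is that $\nu(\varphi|_{L},0)\ge\nu(\varphi,0)$ for every line on which $\varphi|_L\not\equiv-\infty$, with equality for $L$ outside a pluripolar (in particular measure-zero) set of lines; the inequality ``$\le$'' you invoke is therefore available only for such generic $L$, which is all you need since a single good line suffices. With those clarifications, the Ohsawa--Takegoshi--Manivel extension from the codimension $n-1$ slice $L\cap B'$ with weight $\varphi$, together with $F(0)=1$, closes the argument as you describe; note that Skoda's original 1972 proof is purely potential-theoretic and predates Ohsawa--Takegoshi, so your argument is genuinely different from (and shorter than) the historical one, at the cost of invoking a much deeper $L^2$ extension theorem.
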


The following lemma gives a convergence property of jumping numbers.

\begin{Lemma}
	[see \cite{BGMY-valuation}]
	\label{l:lct-N}
	Let $\varphi$ be any plurisubharmonic function near $o$, and let $f\not\equiv0$ be any holomorphic function near $o$. Then
	\[\lim_{N\rightarrow+\infty}c_o^{f}\big(\max\{\varphi,N\log|z|\}\big)=c_o^{f}(\varphi).\]
\end{Lemma}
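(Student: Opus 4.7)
Write $\varphi_N := \max\{\varphi, N\log|z|\}$. Near $o$ one has $\log|z| < 0$, so $N\log|z|$ is pointwise decreasing in $N$, and consequently $\{\varphi_N\}_N$ is a non-increasing sequence of plurisubharmonic functions with $\varphi_N \geq \varphi$ and $\varphi_N \to \varphi$ pointwise off the pluripolar set $\{\varphi = -\infty\}$.

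I would first dispose of the easy inequality. Since $\varphi_N \geq \varphi$, one has $|f|^2 e^{-2c\varphi_N} \leq |f|^2 e^{-2c\varphi}$, so integrability of the right-hand side near $o$ forces that of the left-hand side, giving $c_o^f(\varphi_N) \geq c_o^f(\varphi)$. Combined with the monotonicity of $\varphi_N$ in $N$, the sequence $c_o^f(\varphi_N)$ is non-increasing, so $\lim_N c_o^f(\varphi_N)$ exists and dominates $c_o^f(\varphi)$.

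For the reverse inequality I would argue by contradiction. Assume some $c \in (c_o^f(\varphi), \lim_N c_o^f(\varphi_N))$; then $|f|^2 e^{-2c\varphi}$ is non-integrable near $o$ while $|f|^2 e^{-2c\varphi_N}$ is integrable near $o$ for every $N$. The key observation is the pointwise identity
\[
e^{-2c\varphi_N} = \chi_{\{\varphi \geq N\log|z|\}}\, e^{-2c\varphi} + \chi_{\{\varphi < N\log|z|\}}\, |z|^{-2cN},
\]
which implies
\[
\int_U |f|^2 e^{-2c\varphi_N} \geq \int_{U \cap \{\varphi \geq N\log|z|\}} |f|^2 e^{-2c\varphi}
\]
for any neighborhood $U$ of $o$; as $N \to \infty$ the sets $\{\varphi \geq N\log|z|\}$ increase to $\{\varphi > -\infty\}$ (a full-measure subset of $U$), so by monotone convergence the right-hand side tends to $\int_U |f|^2 e^{-2c\varphi} = +\infty$.

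Converting this divergence on a fixed neighborhood into actual non-integrability of some $|f|^2 e^{-2c\varphi_N}$ near $o$—which is what produces the contradiction—is the main obstacle, since monotone convergence alone only forces the partial integrals to grow without a priori becoming infinite at any finite stage. I would close this gap by feeding the sequence $\phi_N := c\varphi_N$ (translated to be negative) into the Guan--Zhou effectiveness result (Proposition \ref{p:effect_GZ}) and using Remark \ref{rem:effect_GZ} together with the strong openness $\mathcal{I}(c\varphi) = \mathcal{I}_+(c\varphi)$. A Montel / normal-families argument on the correctors $\tilde f_N \in \mathcal{O}(U_N)$ appearing in the definition of $C_{f,c\varphi_N}(U_N)$, combined with Skoda's integrability criterion (Lemma \ref{l:skoda72}) used to control the contribution of the auxiliary weight $|z|^{-2cN}$ concentrated near $\{\varphi = -\infty\}$, should produce a limit corrector witnessing $f \in \mathcal{I}(c\varphi)_o$, contradicting $c > c_o^f(\varphi)$. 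The asymmetry between the decreasing sequence $\{\varphi_N\}$ and the increasing-sequence hypothesis of Remark \ref{rem:effect_GZ} is the technical heart of the argument, and bridging it via such a compactness step is the step where care is most needed.
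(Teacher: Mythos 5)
Your first half is correct: $\varphi_N:=\max\{\varphi,N\log|z|\}$ decreases in $N$ and dominates $\varphi$, so $c_o^f(\varphi_N)$ is non-increasing and bounded below by $c_o^f(\varphi)$, and the limit exists with $\lim_N c_o^f(\varphi_N)\ge c_o^f(\varphi)$. The problem is the reverse inequality, which is the entire content of the lemma, and there your argument has a genuine gap that you yourself flag but do not close. Your monotone-convergence computation only shows that the finite quantities $\int_U|f|^2e^{-2c\varphi_N}$ tend to $+\infty$; this is perfectly consistent with each $|f|^2e^{-2c\varphi_N}$ being integrable near $o$, so no contradiction is reached. Worse, the tools you propose for bridging the gap point in the wrong direction: Proposition \ref{p:effect_GZ} concludes non-integrability of the \emph{limit} weight from uniform control of the approximants $C_{f,\phi_m}(U)$, whereas you need to pass from non-integrability of the limit $c\varphi$ to non-integrability of some approximant $c\varphi_N$ (equivalently, from integrability of all the $c\varphi_N$ to integrability of $c'\varphi$ for $c'<c$); and Remark \ref{rem:effect_GZ} requires an \emph{increasing} sequence, while $\{\varphi_N\}$ decreases. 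Strong openness of $\mathcal{I}(c\varphi)$ and Skoda's criterion (note that $|z|^{-2cN}$ is singular only at $o$, not along $\{\varphi=-\infty\}$) do not obviously combine with a Montel argument to produce the missing step, and as written this is a plan rather than a proof.

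The inequality can in fact be obtained by an elementary decomposition run in the opposite direction from yours: one bounds $\int_U|f|^2e^{-2c\varphi}$ from above rather than bounding $\int_U|f|^2e^{-2c\varphi_N}$ from below. Normalize $\varphi\le 0$, fix $c<c''<\lim_N c_o^f(\varphi_N)$, and use the standard fact that $|f|^2e^{-2c''\varphi}|z|^{2N_0}$ is integrable near $o$ for some $N_0$. On $\{\varphi\ge N\log|z|\}$ one has $\varphi=\varphi_N$, so that piece of $\int_U|f|^2e^{-2c\varphi}$ is controlled by $\int_U|f|^2e^{-2c\varphi_N}<+\infty$; on $\{\varphi<N\log|z|\}$ one has $1<|z|^{2N_0}e^{-2N_0\varphi/N}$, hence $|f|^2e^{-2c\varphi}\le|f|^2e^{-2(c+N_0/N)\varphi}|z|^{2N_0}\le|f|^2e^{-2c''\varphi}|z|^{2N_0}$ once $N$ is large enough that $c+N_0/N\le c''$. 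Summing the two pieces shows $|f|^2e^{-2c\varphi}$ is integrable near $o$, so $c\le c_o^f(\varphi)$ and the limit equals $c_o^f(\varphi)$. The lesson is that the singular set of $\varphi$ away from where $\varphi\ge N\log|z|$ must be absorbed by the a priori weight $|z|^{2N_0}$, not by a compactness or normal-families argument.
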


Let us recall two results about upper envelope of plurisubharmonic functions.

\begin{Lemma}[\emph{Choquet's lemma}, see \cite{demailly-book}]
	\label{lem:Choquet} Every family $(u_{\alpha})$ of uppersemicontinuous functions has a countable subfamily $(v_{j})=(u_{\alpha(j)})$,
	such that its upper envelope $v=\sup_{j}v_{j}$ satisfies $v\leq u\leq u^{*} = v^{*}$,
	where $u=\sup_{\alpha}u_{\alpha}$, $u^{*}(z):=\lim_{\varepsilon\to0}\sup_{\mathbb{B}^{n}(z,\varepsilon)}u$
	and $v^{*}(z):=\lim_{\varepsilon\to0}\sup_{\mathbb{B}^{n}(z,\varepsilon)}v$ are the regularizations of $u$ and $v$.
\end{Lemma}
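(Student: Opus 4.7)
The statement to prove is Choquet's lemma, a classical result whose proof rests on the second-countability of Euclidean space. My plan is to exploit a countable base of balls to extract the required subfamily.

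First, I would fix a countable base $\mathcal{B}=\{B_k\}_{k\in\mathbb{N}}$ of the topology consisting of open balls with rational centers and rational radii. For each $k\in\mathbb{N}$ and each $j\in\mathbb{N}^+$, I would use the definition of $u=\sup_\alpha u_\alpha$ to pick an index $\alpha(k,j)$ such that
\[\sup_{B_k} u_{\alpha(k,j)} \ge \sup_{B_k} u - \frac{1}{j},\]
with the convention that, should $\sup_{B_k} u=+\infty$, one instead chooses $\alpha(k,j)$ with $\sup_{B_k} u_{\alpha(k,j)}\ge j$. Since there are countably many pairs $(k,j)$, the resulting subfamily $(v_j)=(u_{\alpha(k,j)})_{(k,j)}$ is countable. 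Set $v:=\sup_j v_j$.

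The inequality $v\le u$ is automatic, as $v$ is a supremum over a subfamily of $(u_\alpha)$. The inequality $u\le u^*$ is also immediate from the definition of the upper regularization. The crux is therefore to verify $u^*\le v^*$, since combined with the trivial $v^*\le u^*$ (because $v\le u$) this gives $u^*=v^*$. To prove $u^*(z)\le v^*(z)$ at a point $z$, I would fix $\varepsilon>0$ and any $t<u^*(z)$. By definition of $u^*$ there exists $w\in\mathbb{B}^n(z,\varepsilon)$ with $u(w)>t$, hence some index $\alpha$ with $u_\alpha(w)>t$. Since $u_\alpha$ is upper semicontinuous, $\{u_\alpha>t\}$ is open, so there is a base element $B_k\ni w$ contained in $\mathbb{B}^n(z,\varepsilon)$ with $u_\alpha>t$ on $B_k$ (by shrinking if necessary). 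Then $\sup_{B_k} u\ge t$, so for every $j$ with $1/j<\sup_{B_k} u-t$ we have $\sup_{B_k} v_j=\sup_{B_k} u_{\alpha(k,j)}\ge \sup_{B_k}u-1/j>t$, whence $\sup_{\mathbb{B}^n(z,\varepsilon)} v\ge t$. Letting $\varepsilon\to0$ yields $v^*(z)\ge t$, and letting $t\nearrow u^*(z)$ gives $v^*(z)\ge u^*(z)$.

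The main obstacle, modest as it is, will be to handle the boundary cases where $u^*$ or $\sup_{B_k} u$ equals $+\infty$ and to justify that one may shrink the ambient ball to a base element on which $u_\alpha>t$; both are resolved by a careful choice using second-countability and the upper semicontinuity of each $u_\alpha$. Once these two technicalities are dealt with, the chain $v\le u\le u^*=v^*$ follows at once from the construction.
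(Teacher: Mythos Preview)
Your argument is correct and is the standard proof of Choquet's lemma via second-countability. Note, however, that the paper does not supply its own proof of this statement: it is quoted as a classical fact with a reference to Demailly's book, so there is no in-paper argument to compare against. One minor remark: the upper semicontinuity of each $u_\alpha$ is not actually needed in the step where you pass from $u(w)>t$ to $\sup_{B_k}u>t$; it suffices to choose any base element $B_k$ with $w\in B_k\subset\mathbb{B}^n(z,\varepsilon)$, since then $\sup_{B_k}u\ge u(w)>t$ already. Your construction of the $\alpha(k,j)$ then gives $\sup_{B_k}v>t$ directly, and the rest follows as you wrote.
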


\begin{Proposition}[see Proposition (4.24) in \cite{demailly-book}]
	\label{pro:Demailly}
	If all $(u_{\alpha})$ are subharmonic, the upper regularization $u^{*}$ is subharmonic
	and equals almost everywhere to $u$.
\end{Proposition}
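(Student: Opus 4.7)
The plan is to reduce the general family to a countable one via Choquet's lemma (Lemma~\ref{lem:Choquet}) and then establish the submean inequality for $u^*$ by combining the submean inequalities satisfied by each $u_\alpha$ with the Lebesgue differentiation theorem. Throughout I would assume, as is standard for this statement, that the family $(u_\alpha)$ is locally uniformly bounded above so that $u$ is not identically $+\infty$ and $u^*\in L^1_{\mathrm{loc}}$; without this hypothesis the conclusion fails in general.

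First I would apply Lemma~\ref{lem:Choquet} to extract a countable subfamily $(v_j)=(u_{\alpha(j)})$ with $v=\sup_j v_j\le u$ and $v^*=u^*$. Replacing $v_j$ by $\max(v_1,\ldots,v_j)$, which remains subharmonic, I may assume $(v_j)$ is monotone increasing. On any connected component where $v\equiv -\infty$ the claim is trivial, so I focus on components where $v\not\equiv -\infty$.

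The heart of the argument is the following estimate. Fix a point $z_0$ in the domain and a ball $B(z_0,r)$ contained in it. Each subharmonic $v_j$ satisfies the submean inequality, and since $v_j\le v$ one has for every $z$ sufficiently close to $z_0$,
\[v_j(z)\;\le\;\frac{1}{|B(z,r)|}\int_{B(z,r)} v_j\,dV\;\le\;\frac{1}{|B(z,r)|}\int_{B(z,r)} v\,dV.\]
Taking the supremum over $j$ gives $v(z)\le\frac{1}{|B(z,r)|}\int_{B(z,r)} v\,dV$ for $z$ near $z_0$. For fixed $r$, the right-hand side is a continuous function of $z$, since it is a convolution-type average of the locally integrable function $v$; hence taking $\limsup_{z\to z_0}$ yields
\[v^*(z_0)\;\le\;\frac{1}{|B(z_0,r)|}\int_{B(z_0,r)} v\,dV.\]

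Next I would invoke the Lebesgue differentiation theorem: for Lebesgue-almost-every $z_0$, the right-hand side tends to $v(z_0)$ as $r\to 0$, so $v^*(z_0)\le v(z_0)$ off a null set. Combined with the trivial inequality $v\le v^*$, this forces $v=v^*$ almost everywhere. Substituting this identity back into the displayed estimate gives
\[v^*(z_0)\;\le\;\frac{1}{|B(z_0,r)|}\int_{B(z_0,r)} v^*\,dV,\]
which is the submean inequality for $v^*$. Since $v^*$ is upper semicontinuous by construction and locally integrable by the displayed bound, it is subharmonic. Finally, the sandwich $v\le u\le u^*=v^*$ together with $v=v^*$ a.e.\ yields $u=u^*$ almost everywhere, giving both assertions. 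The main obstacle is the a.e.\ equality $v=v^*$: one must convert an $L^1$-average bound into a pointwise identity outside a null set, and this is precisely where Lebesgue differentiation is indispensable — without it one only recovers the integral-averaged form of subharmonicity, not the pointwise submean inequality.
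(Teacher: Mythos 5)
Your argument is correct and is essentially the standard proof of this classical fact (the paper itself gives no proof, only the citation to Demailly's Proposition (4.24)): Choquet's lemma to reduce to an increasing countable subfamily, the mean-value inequality passed to the supremum, continuity of the ball averages, and Lebesgue differentiation to get $v=v^{*}$ almost everywhere and hence the submean inequality for $v^{*}$. You also correctly flag the implicit hypothesis that the family is locally uniformly bounded above, which is the standing assumption under which the cited proposition is stated.
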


Let $\varphi$ be a plurisubharmonic function near $o$. Let us recall the subadditive theorem of the multiplier ideal sheaves.
\begin{Lemma}[see \cite{DEL00}]
	\label{l:subaddtive}
	For any positive numbers $t$ and $s$, we have $\mathcal{I}((s+t)\varphi)_o\subset\mathcal{I}(t\varphi)_o\cdot\mathcal{I}(s\varphi)_o$.
\end{Lemma}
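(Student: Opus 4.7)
The plan is to follow the Demailly--Ein--Lazarsfeld strategy, using the Ohsawa--Takegoshi $L^2$ extension theorem applied to the diagonal embedding $\Delta\hookrightarrow X\times X$. I would work locally in a small polydisc $U\subset\mathbb{C}^n$ around $o$, consider the product $U\times U$ with coordinate projections $\pi_1,\pi_2\colon U\times U\to U$ and diagonal $\Delta=\{(z,z):z\in U\}$, and set $\tilde\varphi(x,y):=s\,\pi_1^*\varphi+t\,\pi_2^*\varphi=s\varphi(x)+t\varphi(y)$; under the identification $\Delta\cong U$ we have $\tilde\varphi|_\Delta=(s+t)\varphi$.

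The first step is to establish a Künneth-type decomposition: every germ $F$ at $(o,o)$ with $|F|^2 e^{-2\tilde\varphi}$ locally integrable admits a finite representation
\[F(x,y)=\sum_{i=1}^{N}g_i(x)\,h_i(y),\qquad (g_i,o)\in\mathcal{I}(s\varphi)_o,\ (h_i,o)\in\mathcal{I}(t\varphi)_o.\]
Since the weight $e^{-2\tilde\varphi}$ separates the variables, one can proceed by expanding $F(x,y)=\sum_\alpha h_\alpha(y)\,x^\alpha$ as a power series in $x$ centered at $o$ and using Fubini together with Parseval-type estimates on a product of polydiscs to show that each coefficient $h_\alpha$ lies in $\mathcal{I}(t\varphi)_o$, and symmetrically after swapping the roles of $x$ and $y$; finiteness of the sum is then achieved by truncating at a sufficiently high order and invoking the Noetherian property of the stalks of $\mathcal{I}(s\varphi)$ and $\mathcal{I}(t\varphi)$.

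Second, given $(f,o)\in\mathcal{I}((s+t)\varphi)_o$, I would apply the Ohsawa--Takegoshi $L^2$ extension theorem to the submanifold $\Delta$ of $U\times U$ equipped with the weight $\tilde\varphi$. Since $f$ is holomorphic on (a neighborhood of $o$ in) $\Delta$ with $\int_\Delta|f|^2 e^{-2(s+t)\varphi}<+\infty$ after shrinking $U$, the theorem produces a holomorphic extension $F$ to a neighborhood of $(o,o)$ with $F|_\Delta=f$ and
\[\int |F|^2 e^{-2\tilde\varphi}\le C\int_\Delta|f|^2 e^{-2(s+t)\varphi}<+\infty,\]
so that $(F,(o,o))\in\mathcal{I}(\tilde\varphi)_{(o,o)}$. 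Applying the decomposition from the first step yields $F(x,y)=\sum_i g_i(x)h_i(y)$, and restricting to the diagonal gives $f(z)=\sum_i g_i(z)h_i(z)\in\mathcal{I}(s\varphi)_o\cdot\mathcal{I}(t\varphi)_o$, which is exactly the claim.

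The main obstacle is the Künneth decomposition in the first step. The easy direction, namely that any product $g(x)h(y)$ with $g\in\mathcal{I}(s\varphi)_o$ and $h\in\mathcal{I}(t\varphi)_o$ lies in $\mathcal{I}(\tilde\varphi)_{(o,o)}$, is immediate by Fubini; the converse requires one to pass from an $L^2$ bound against the mixed weight $e^{-2\tilde\varphi}$ to a genuine algebraic factorization whose factors have controlled integrability. This is where the delicate interplay between the $L^2$ estimates, the coherence of the multiplier ideal sheaves, and the Noetherian truncation of the power-series expansion is essential, and it is the reason the Ohsawa--Takegoshi theorem (and not merely Fubini) is needed to handle a general plurisubharmonic $\varphi$.
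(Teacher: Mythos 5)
The paper does not prove this lemma at all --- it is quoted from \cite{DEL00} --- so the relevant comparison is with the Demailly--Ein--Lazarsfeld argument itself, and your outline is precisely that argument: the K\"unneth-type identity $\mathcal{I}\bigl(s\,\pi_1^*\varphi+t\,\pi_2^*\varphi\bigr)_{(o,o)}=\pi_1^{-1}\mathcal{I}(s\varphi)_o\cdot\pi_2^{-1}\mathcal{I}(t\varphi)_o$ on the product, followed by Ohsawa--Takegoshi extension from the diagonal and restriction back to it. The second and third steps are fine as stated.

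The gap is in your mechanism for the K\"unneth step. Expanding $F(x,y)=\sum_{\alpha}h_{\alpha}(y)x^{\alpha}$ in the monomial basis and showing each $h_{\alpha}\in\mathcal{I}(t\varphi)_o$ (which one can indeed do, since $e^{-2s\varphi(x)}$ is locally bounded below) does \emph{not} place $F$ in the product ideal: the $x$-factors $x^{\alpha}$ of low degree generally do \emph{not} belong to $\mathcal{I}(s\varphi)_o$ (e.g.\ $x^{0}=1\notin\mathcal{I}(s\varphi)_o$ whenever $e^{-2s\varphi}$ is non-integrable), and truncating at high order leaves exactly these uncontrolled terms in the finite part. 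The standard repair is to replace the monomials by an orthonormal basis $(\sigma_k)_k$ of the weighted Bergman space $A^2(U,s\varphi)$ (the same device as in Lemma \ref{l:appro-Berg}): writing $F(x,y)=\sum_k\sigma_k(x)h_k(y)$ with $h_k(y)=\int F(x,y)\overline{\sigma_k(x)}e^{-2s\varphi(x)}$, one gets $\sigma_k\in\mathcal{I}(s\varphi)$ automatically and $\sum_k\int|h_k|^2e^{-2t\varphi}=\|F\|^2<+\infty$, so each $h_k\in\mathcal{I}(t\varphi)_o$. The sum is still infinite, and finiteness is not obtained by degree truncation but by the strong Noetherian property of coherent ideal sheaves (the partial products generate a stabilizing increasing sequence of coherent subsheaves of the product ideal) together with the closedness of ideals under local uniform convergence (the paper's Lemma \ref{closedness}). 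With that substitution your argument becomes the proof in \cite{DEL00}; without it, step one does not deliver the claimed factorization.
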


The following is the well-known Demailly's approximation theorem of plurisubharmonic functions.

\begin{Lemma}[see \cite{demailly2010}]\label{l:appro-Berg}
	Let $D\subset\mathbb{C}^n$ be a bounded pseudoconvex domain, and let $\varphi$ be a plurisubharmonic function $D$. For any positive integer $m$, let $\{\sigma_{m,k}\}_{k=1}^{\infty}$ be an orthonormal basis of $A^2(D,2m\varphi):=\big\{f\in\mathcal{O}(D):\int_{D}|f|^2e^{-2m\varphi}d\lambda<+\infty\big\}$. Denote that 
	$$\varphi_m:=\frac{1}{2m}\log\sum_{k=1}^{\infty}|\sigma_{m,k}|^2$$
	on $D$. Then there exist two positive constants $c_1$ (depending only on $n$ and diameter of $D$) and $c_2$ such that 
	\begin{equation}
		\label{eq:0911b}\varphi(z)-\frac{c_1}{m}\le\varphi_m(z)\le\sup_{|\tilde z-z|<r}\varphi(\tilde z)+\frac{1}{m}\log\frac{c_2}{r^n}
	\end{equation}
	for any $z\in D$ satisfying $\{\tilde z\in\mathbb{C}^n:|\tilde z-z|<r\}\subset\subset D$. Especially, $\varphi_m$ converges to $\varphi$ pointwisely and in $L^1_{\mathrm{loc}}$ on $D$.
\end{Lemma}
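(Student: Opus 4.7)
The plan is to use the reproducing-kernel characterization of $\sum_k|\sigma_{m,k}|^2$ and prove the two inequalities in \eqref{eq:0911b} separately: the upper bound via the sub-mean value property, and the lower bound via an effective Ohsawa-Takegoshi extension from a point. The starting point is the standard identity
\[\sum_{k=1}^\infty|\sigma_{m,k}(z)|^2=\sup\bigl\{|f(z)|^2:f\in A^2(D,2m\varphi),\ \|f\|_{2m\varphi}\le 1\bigr\},\]
valid for any orthonormal basis of a Hilbert space of holomorphic functions with continuous point evaluations. With this in hand, proving \eqref{eq:0911b} reduces to producing a good test function at $z$ (for the lower bound) and bounding every test function pointwise (for the upper bound).

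For the upper bound, I would fix $z\in D$ and $r>0$ with $\{|\tilde z-z|<r\}\subset\subset D$, and apply the sub-mean value inequality to the subharmonic function $|f|^2$ on the ball $B(z,r)$, followed by pulling the weight $e^{-2m\varphi}$ through its supremum on this ball:
\[|f(z)|^2\le\frac{n!}{\pi^n r^{2n}}\int_{B(z,r)}|f|^2\,d\lambda\le\frac{n!}{\pi^n r^{2n}}\,e^{2m\sup_{B(z,r)}\varphi}\,\|f\|_{2m\varphi}^2.\]
Taking the supremum over $f$ with $\|f\|_{2m\varphi}\le 1$ and applying $\tfrac{1}{2m}\log$ yields the right-hand side of \eqref{eq:0911b}, with $c_2$ depending only on $n$.

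For the lower bound, I would invoke the Ohsawa-Takegoshi extension theorem applied to extension from $\{z\}\subset D$. Assuming $\varphi(z)>-\infty$ (otherwise the inequality is vacuous), this produces $f_z\in\mathcal{O}(D)$ with $f_z(z)=1$ and
\[\int_D|f_z|^2e^{-2m\varphi}\,d\lambda\le C\,e^{-2m\varphi(z)},\]
with $C$ depending only on $n$ and $\mathrm{diam}(D)$. The extremal identity then gives $\sum_k|\sigma_{m,k}(z)|^2\ge 1/\|f_z\|_{2m\varphi}^2\ge C^{-1}e^{2m\varphi(z)}$, hence $\varphi_m(z)\ge\varphi(z)-\tfrac{1}{2m}\log C$, which is the left-hand inequality of \eqref{eq:0911b}. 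The pointwise and $L^1_{\mathrm{loc}}$ convergence $\varphi_m\to\varphi$ then falls out by squeezing between the two bounds and letting $r\to 0$, using the upper semicontinuity of $\varphi$ to make $\sup_{B(z,r)}\varphi\to\varphi(z)$.

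The only genuine difficulty is the lower bound, which rests on an \emph{effective} form of Ohsawa-Takegoshi in which the extension constant depends only on $n$ and $\mathrm{diam}(D)$, independent of $\varphi$ and of $m$. This is precisely the content of the sharp Ohsawa-Takegoshi extension theorem, so the lemma follows by quoting it rather than reproving it.
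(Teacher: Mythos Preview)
The paper does not give its own proof of this lemma; it is simply quoted from Demailly's book \cite{demailly2010} as a known result. Your proposal reproduces exactly the classical argument that appears there: the extremal characterization of the Bergman kernel, the sub-mean-value inequality for the upper bound, and Ohsawa--Takegoshi extension from a point for the lower bound. So your approach is correct and is, in fact, the standard proof you would find in the cited reference.

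One minor remark: the $L^1_{\mathrm{loc}}$ convergence does not quite ``fall out'' from the pointwise squeeze alone; one typically also observes that the $\varphi_m$ are plurisubharmonic and uniformly bounded above on compacta (from the upper bound with a fixed small $r$), and then invokes standard compactness/monotone-type arguments for psh functions, or bounds $\int_K(\sup_{B(\cdot,r)}\varphi-\varphi)\,d\lambda\to 0$ as $r\to 0$. This is routine and is handled in Demailly's text, but it is worth a sentence if you intend your write-up to be self-contained.
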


\subsection{Zhou weights}

In this section, we recall some results about Zhou weights (see \cite{BGMY-valuation}).

Let $f_{0}=(f_{0,1},\cdots,f_{0,m})$ be a vector,
where $f_{0,1},\cdots,f_{0,m}$ are holomorphic functions near $o$, and $\varphi_{0}$ a plurisubharmonic function near $o$ such that $|f_{0}|^{2}e^{-2\varphi_{0}}$ is integrable near $o$. Let $D$ be a bounded domain in $\mathbb{C}^n$. Recall the definition of the global Zhou weights.
\begin{Definition}[\cite{BGMY-valuation}]
	Call a negative plurisubharmonic function $\Phi^{f_0,\varphi_0,D}_{o,\max}$ ($\Phi^{D}_{o,\max}$ for short) on $D$ a global Zhou weight related to $|f_0|^2e^{-2\varphi_0}$ if the following statements hold:
	
	$(1)$ $|f_0|^2e^{-2\varphi_0}|z|^{2N_0}e^{-2\Phi^{D}_{o,\max}}$ is integrable near $o$ for large enough $N_0$;
	
	$(2)$ $|f_0|^2e^{-2\varphi_0-2\Phi^{D}_{o,\max}}$ is not integrable near $o$;
	
	$(3)$ for any negative plurisubharmonic function $\tilde\varphi$ on $D$ satisfying that $\tilde\varphi\ge\Phi^{D}_{o,\max}$ on $D$ and $|f_0|^2e^{-2\varphi_0-2\tilde\varphi}$ is not integrable near $o$, $\tilde\varphi=\Phi^{D}_{o,\max}$ holds on $D$.
\end{Definition}

As $D$ is a bounded domain, any global Zhou weight is also a local Zhou weight, and for any local Zhou weight $\Phi_{o,\max}$ related to $|f_0|^2e^{-2\varphi_0}$ there exists a global Zhou weight $\Phi^D_{o,\max}$ related to $|f_0|^2e^{-2\varphi_0}$ such that $\Phi^D_{o,\max}=\Phi_{o,\max}+O(1)$ near $o$ (see \cite{BGMY-valuation}).

\begin{Lemma}[\cite{BGMY-valuation}]\label{l:local-glabal}
	Let $\Phi_{o,\max}$ be a local Zhou weight related to $|f_0|^2e^{-2\varphi_0}$ near $o$, and denote
	\begin{equation}
		\nonumber
		\begin{split}
			L(\Phi_{o,\max}):=\big\{\tilde\varphi(z)\in \mathrm{PSH}(D):\tilde\varphi<0\ \& \ |f_0|^2e^{-2\varphi_0-2\tilde\varphi}\text{ is not integrable near} \ o& \\
			\& \ \tilde\varphi\ge\Phi_{o,\max}+O(1)\text{ near $o$}&\big\}.
		\end{split}
	\end{equation}
	Then 
	\begin{equation}
		\nonumber
		\Phi^D_{o,\max}(z):=\sup\big\{\tilde\varphi(z):\tilde\varphi\in L(\Phi_{o,\max})\big\}, \ \forall z\in D
	\end{equation}
	is a global Zhou weight related to $|f_0|^2e^{-2\varphi_0}$ on $D$ satisfying that 
	$$\Phi^D_{o,\max}=\Phi_{o,\max}+O(1)$$ near $o$.
\end{Lemma}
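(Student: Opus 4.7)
The plan is to show that $\Phi^D_{o,\max}$, understood as the upper semicontinuous regularization of the displayed supremum (which is plurisubharmonic and $\le 0$ by Choquet's lemma and Proposition \ref{pro:Demailly}), satisfies the three defining properties of a global Zhou weight related to $|f_0|^2e^{-2\varphi_0}$ together with $\Phi^D_{o,\max}=\Phi_{o,\max}+O(1)$ near $o$. I would first check $L(\Phi_{o,\max})\neq\emptyset$. Fixing a small ball $B(o,2r)\subset\subset D$ on which $\Phi_{o,\max}$ is defined, and $R$ with $D\subset B(o,R)$, and using that $\Phi_{o,\max}$ is bounded above on $\bar B(o,2r)$ while $\Phi_{o,\max}\ge N\log|z|$ near $o$ for some large $N$, one can choose $A>N$ and $C\gg 1$ so that the glued function
\[\psi_0(z):=\begin{cases}\max\{\Phi_{o,\max}(z)-C,\,A\log(|z|/R)\}, & z\in B(o,2r),\\ A\log(|z|/R), & z\in D\setminus B(o,2r),\end{cases}\]
is a well-defined negative plurisubharmonic function on $D$ that coincides with $\Phi_{o,\max}-C$ on some neighborhood of $o$; condition (2) of the local Zhou weight then forces $|f_0|^2e^{-2\varphi_0-2\psi_0}$ to be non-integrable near $o$, so $\psi_0\in L(\Phi_{o,\max})$.

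Next I would invoke Choquet's lemma to select a countable subfamily $\{v_j\}\subset L(\Phi_{o,\max})$ whose regularized supremum is $\Phi^D_{o,\max}$, and set $w_m:=\max(v_1,\dots,v_m)$, an increasing sequence of negative plurisubharmonic functions on $D$ converging pointwise (hence in Lebesgue measure) to a function that agrees a.e.\ with $\Phi^D_{o,\max}$. The central claim is that every $w_m$ still lies in $L(\Phi_{o,\max})$. For this I would apply condition (3) of the local Zhou weight to each $v_j$: the one-sided bound $v_j\ge\Phi_{o,\max}+O(1)$ near $o$ built into the definition of $L$, together with non-integrability of $|f_0|^2e^{-2\varphi_0-2v_j}$, upgrades to the two-sided bound $|v_j-\Phi_{o,\max}|\le C_j$ near $o$. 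Taking the maximum yields
\[\Phi_{o,\max}-C_1\le w_m\le \Phi_{o,\max}+\max_{1\le j\le m}C_j\]
near $o$, and pairing the upper estimate with condition (2) of the local Zhou weight forces $|f_0|^2e^{-2\varphi_0-2w_m}$ to be non-integrable near $o$, so $w_m\in L(\Phi_{o,\max})$.

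With $\{w_m\}$ in hand, Remark \ref{rem:effect_GZ} supplies inequality (\ref{equ:A}) for this increasing sequence, and Proposition \ref{p:effect_GZ} then delivers the non-integrability of $|f_0|^2e^{-2\varphi_0-2\Phi^D_{o,\max}}$ near $o$, which is condition (2) of a global Zhou weight. The bound $\Phi^D_{o,\max}\ge v_1\ge\Phi_{o,\max}-C_1$ near $o$ combined with this non-integrability triggers one more application of condition (3) of the local Zhou weight to produce $\Phi^D_{o,\max}=\Phi_{o,\max}+O(1)$ near $o$; this two-sided relation promotes condition (1) of the local Zhou weight to condition (1) of the global one. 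For condition (3) of the global Zhou weight, any candidate $\tilde\varphi\ge\Phi^D_{o,\max}$ on $D$ with $|f_0|^2e^{-2\varphi_0-2\tilde\varphi}$ non-integrable near $o$ automatically satisfies $\tilde\varphi\ge\Phi_{o,\max}+O(1)$ near $o$, hence lies in $L(\Phi_{o,\max})$, whence $\tilde\varphi\le\Phi^D_{o,\max}$ and equality follows on $D$. The main obstacle will be the step in the preceding paragraph showing that the maxima $w_m$ inherit non-integrability, since here one has to extract a two-sided comparison with $\Phi_{o,\max}$ from the local Zhou weight's maximality condition in a way that superficially looks circular but is actually consistent.
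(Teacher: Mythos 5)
Your proposal is correct, and it follows the approach that the paper's preliminaries are clearly set up to support (the lemma itself is imported from \cite{BGMY-valuation} without proof here): non-emptiness of $L(\Phi_{o,\max})$ by an explicit gluing, Choquet's lemma to reduce to an increasing sequence of maxima, the maximality condition $(3)$ of the local Zhou weight to get the two-sided comparison $v_j=\Phi_{o,\max}+O(1)$ that keeps each $w_m$ in $L(\Phi_{o,\max})$, and then Proposition \ref{p:effect_GZ} with Remark \ref{rem:effect_GZ} to pass non-integrability to the limit. The step you flag as a possible obstacle is in fact fine and not circular: condition $(3)$ is applied to each $v_j$ individually, and the resulting upper bound $w_m\le\Phi_{o,\max}+O(1)$ combined with condition $(2)$ of the local weight yields the non-integrability for $w_m$. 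One small point worth making explicit: the lemma asserts that the raw supremum (not its regularization) is the global Zhou weight, so you should close the loop by observing that your argument shows the regularization itself lies in $L(\Phi_{o,\max})$ (it is $\ge\Phi_{o,\max}+O(1)$ near $o$ and carries the non-integrability), hence is dominated by the supremum and therefore equal to it.
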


The following lemma shows the existence of Zhou weight.

\begin{Lemma}[\cite{BGMY-valuation}]\label{r:exists}
	Assume that there exists a negative plurisubharmonic function $\varphi$ on $D$ such that $|f_0|^2e^{-2\varphi_0-2\varphi}|z|^{2N_0}$ is integrable near $o$ for large enough $N_0$ and $(f_0,o)\not\in\mathcal{I}(\varphi+\varphi_0)_o.$
	
	Then there exists a global Zhou weight $\Phi^{D}_{o,\max}$ on $D$ related to $|f_0|^2e^{-2\varphi_0}$ such that $\Phi^{D}_{o,\max}\ge\varphi$ on $D$. 
\end{Lemma}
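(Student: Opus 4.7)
The plan is to produce $\Phi^{D}_{o,\max}$ as a maximal element, under the pointwise partial order $\le$, of the family
\[\mathcal{F}:=\bigl\{\tilde\varphi\in\mathrm{PSH}(D) : \tilde\varphi\le 0,\ \tilde\varphi\ge\varphi\text{ on }D,\ |f_0|^2e^{-2\varphi_0-2\tilde\varphi}\text{ is not integrable near }o\bigr\}.\]
The hypothesis $(f_0,o)\notin\mathcal{I}(\varphi+\varphi_0)_o$ places $\varphi$ itself in $\mathcal{F}$, so $\mathcal{F}\neq\emptyset$. Once a maximal element $\Phi\in\mathcal{F}$ is found, condition $(2)$ in the definition of global Zhou weight holds by membership in $\mathcal{F}$; condition $(1)$ follows from $\Phi\ge\varphi$, which yields $e^{-2\Phi}\le e^{-2\varphi}$ and hence reduces the integrability of $|f_0|^2e^{-2\varphi_0}|z|^{2N_0}e^{-2\Phi}$ near $o$ to the corresponding integrability hypothesis on $\varphi$; and condition $(3)$ follows from the maximality of $\Phi$ in $\mathcal{F}$, since any candidate $\tilde\varphi$ with $\tilde\varphi\ge\Phi$ and the non-integrability condition automatically lies in $\mathcal{F}$.

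Applying Zorn's lemma requires exhibiting an upper bound in $\mathcal{F}$ for every chain $\mathcal{C}\subset\mathcal{F}$. The natural candidate is $\Psi:=\bigl(\sup_{\tilde\varphi\in\mathcal{C}}\tilde\varphi\bigr)^{*}$, which by Proposition \ref{pro:Demailly} is plurisubharmonic on $D$ with $\varphi\le\Psi\le 0$. By Choquet's lemma (Lemma \ref{lem:Choquet}), there is a countable subfamily $(u_j)\subset\mathcal{C}$ whose upper envelope has regularization equal to $\Psi$; because $\mathcal{C}$ is totally ordered, the truncated maxima $\tilde\varphi_j:=\max(u_1,\ldots,u_j)$ still lie in $\mathcal{C}\subset\mathcal{F}$, form an increasing sequence, and their upper envelope still regularizes to $\Psi$. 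Thus $\tilde\varphi_j\nearrow\Psi$ almost everywhere on $D$, and in particular $\tilde\varphi_j\to\Psi$ in Lebesgue measure on any polydisc $\Delta^n\Subset D$ centered at $o$.

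The crucial step is to propagate non-integrability through this limit. For each $j$, $|f_0|^2e^{-2\varphi_0-2\tilde\varphi_j}$ is not integrable near $o$; since the sequence is increasing, Remark \ref{rem:effect_GZ} gives $\inf_j C_{f_0,\varphi_0+\tilde\varphi_j}(U)>0$ for every sufficiently small neighborhood $U$ of $o$, which is exactly the hypothesis of Proposition \ref{p:effect_GZ}. That proposition then yields non-integrability of $|f_0|^2e^{-2\varphi_0-2\Psi}$ near $o$, so $\Psi\in\mathcal{F}$ is the desired upper bound, and Zorn's lemma produces the required maximal element $\Phi^{D}_{o,\max}$.

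The main obstacle I expect is precisely this propagation step: a naive attempt using pointwise maxima of arbitrary pairs of elements of $\mathcal{F}$ breaks down, because the non-integrability of $|f_0|^2e^{-2\varphi_0-2\tilde\varphi}$ is not preserved under maxima in general (for instance in $\mathbb{C}^2$, $\log|z_1|$ and $\log|z_2|$ each produce non-integrable weights, but their maximum does not). The chain structure of $\mathcal{C}$ is exactly what allows one to extract an \emph{increasing} sequence, matching the monotonicity required by Remark \ref{rem:effect_GZ} so that Guan--Zhou's lower semicontinuity result applies.
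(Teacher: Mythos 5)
This lemma is quoted from \cite{BGMY-valuation} rather than proved in the paper, but your argument — Zorn's lemma on the family $\mathcal{F}$, with upper bounds for chains produced via Choquet's lemma and Proposition \ref{pro:Demailly}, and non-integrability propagated to the envelope by Remark \ref{rem:effect_GZ} together with Proposition \ref{p:effect_GZ} — is correct and is essentially the argument used in that reference. Your closing observation about why the chain structure (rather than arbitrary pairwise maxima) is needed correctly identifies the one place where monotonicity is essential.
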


We recall an approximation result of global Zhou weights.

\begin{Lemma}[\cite{BGMY-valuation}]\label{cor-approximation}
	If $D$ is a bounded strictly hyperconvex domain, and $\Phi^D_{o,\max}$ is a global Zhou weight related to some $|f_0|^2e^{-2\varphi_0}$   on $D$ near $o$, then for any $w\in D$, we have
	\[\Phi^D_{o,\max}(w)=\sup\left\{\frac{\log|f(w)|}{\sigma(\log|f|, \Phi^D_{o,\max})} : f\in \mathcal{O}(D), \ \sup_D|f|\le 1, \ f(o)=0, \  f\not\equiv 0\right\}.\]
\end{Lemma}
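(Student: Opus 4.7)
Denote the right-hand side of the claimed identity by $\Psi(w)$. My plan is to prove the two inequalities $\Psi\le\Phi^D_{o,\max}$ and $\Psi\ge\Phi^D_{o,\max}$ separately. The first is a clean application of the maximality property $(3)$ built into the definition of a global Zhou weight; the second requires producing near-optimal holomorphic test functions, for which I would invoke Demailly's Bergman-kernel approximation (Lemma \ref{l:appro-Berg}).

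For the upper bound $\Psi(w)\le\Phi^D_{o,\max}(w)$, I would fix an admissible $f$ and set $\sigma_f:=\sigma(\log|f|,\Phi^D_{o,\max})$, discarding the trivial case $\sigma_f=0$ (where the quotient is $-\infty$). Because $\sup_D|f|\le 1$, the function $u:=\log|f|/\sigma_f$ is negative plurisubharmonic on $D$, and by the very definition of $\sigma_f$ we have $u\le\Phi^D_{o,\max}+C$ near $o$ for some constant $C$. Then $v:=\max\{u,\Phi^D_{o,\max}\}$ is a negative plurisubharmonic function on $D$ with $v\ge\Phi^D_{o,\max}$ everywhere and $v\le\Phi^D_{o,\max}+C'$ near $o$ (with $C'=\max(C,0)$). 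Hence $|f_0|^2e^{-2\varphi_0-2v}\ge e^{-2C'}|f_0|^2e^{-2\varphi_0-2\Phi^D_{o,\max}}$ near $o$, and the non-integrability guaranteed by property $(2)$ of the Zhou weight passes to $v$. Property $(3)$ then forces $v=\Phi^D_{o,\max}$ on $D$, and hence $u\le\Phi^D_{o,\max}$ on $D$; taking the supremum over admissible $f$ yields Direction~1.

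For the lower bound, I would apply Lemma \ref{l:appro-Berg} to $\Phi^D_{o,\max}$: choose an orthonormal basis $\{\sigma_{m,k}\}_k$ of $A^2(D,2m\Phi^D_{o,\max})$, set $\varphi_m:=\frac{1}{2m}\log\sum_k|\sigma_{m,k}|^2$, and use the pointwise convergence $\varphi_m(w_0)\to\Phi^D_{o,\max}(w_0)$ off the (isolated) singular locus of $\Phi^D_{o,\max}$. For each $m$ and each $w_0\in D\setminus\{o\}$, let $F_m$ be the $A^2$-normalized Bergman kernel extremal at $w_0$, so that $\log|F_m(w_0)|=m\varphi_m(w_0)$. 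Strict hyperconvexity of $D$ together with local boundedness of $\Phi^D_{o,\max}$ away from $o$ lets me rescale $F_m$ by a constant $\lambda_m>0$ (growing at most like a power of $m$) so that $\sup_D|\lambda_mF_m|\le 1$ while losing only $o(m)$ in $\log|\lambda_mF_m(w_0)|$; this produces an admissible $g_m:=\lambda_mF_m$. Applying Direction~1 to $g_m$ automatically gives $\sigma(\log|g_m|,\Phi^D_{o,\max})\le m+o(m)$, and combining this with the value $\log|g_m(w_0)|=m\varphi_m(w_0)+o(m)$ yields
\[\frac{\log|g_m(w_0)|}{\sigma(\log|g_m|,\Phi^D_{o,\max})}\ge\varphi_m(w_0)-o(1)\xrightarrow[m\to\infty]{}\Phi^D_{o,\max}(w_0),\]
which is Direction~2. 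The case $w_0=o$ is trivial since both sides are $-\infty$.

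The main obstacle is the coordination in Direction~2 of three competing constraints on the candidate functions: the normalization $\sup_D|g_m|\le 1$, the pointwise asymptotic $\log|g_m(w_0)|=m\Phi^D_{o,\max}(w_0)+o(m)$, and the relative-type bound $\sigma(\log|g_m|,\Phi^D_{o,\max})\le m+o(m)$. The Bergman extremal $F_m$ supplies the pointwise value and Direction~1 forces the upper bound on $\sigma$, so the sticking point is turning an $A^2$-normalization (with respect to the singular weight $e^{-2m\Phi^D_{o,\max}}$) into a sup-norm normalization without destroying the $o(m)$ error at $w_0$. If the direct rescaling does not produce tight enough estimates, a cleaner route is to replace the Bergman construction by an Ohsawa--Takegoshi/Hörmander extension argument built on the approximations $\varphi_m$, which provides $\sup$-norm control by design, followed by a diagonal passage to the limit.
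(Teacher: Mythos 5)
The paper does not prove this lemma --- it is quoted verbatim from \cite{BGMY-valuation} --- so your argument has to stand on its own. Your Direction~1 is correct and is the standard use of the maximality property $(3)$ (modulo the attainment $\log|f|\le\sigma_f\Phi^D_{o,\max}+O(1)$, which is either taken as known or handled by running the argument with $c<\sigma_f$ and letting $c\to\sigma_f$). Direction~2, however, has two genuine gaps, and they are not merely technical.

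First, the normalization. A unit vector $F_m\in A^2(D,2m\Phi^D_{o,\max})$ satisfies $\int_D|F_m|^2\le1$ (since $\Phi^D_{o,\max}\le0$), but an $L^2$ holomorphic function on $D$ need not be bounded: $\sup_D|F_m|$ can be infinite, so no rescaling $\lambda_mF_m$ with $\lambda_m>0$ achieves $\sup_D|\lambda_mF_m|\le1$. The mean-value inequality only yields $\sup_{D'}|F_m|\le C_{D'}$ for $D'\Subset D$, and a test function admissible only on $D'$ is controlled (via Direction~1 on $D'$) by $\Phi^{D'}_{o,\max}$, which dominates $\Phi^{D}_{o,\max}$ on $D'$, so the envelopes no longer match. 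This is exactly where \emph{strict} hyperconvexity must be used: one performs the Bergman approximation on slightly larger domains $D_c=\{\rho<c\}\Supset\overline{D}$, where the $L^2$ bound does give a uniform sup bound on $D$, and one then needs a stability statement comparing $\Phi^{D_c}_{o,\max}$ with $\Phi^{D}_{o,\max}$ as $c\to0^+$. Your proposal never exploits strict hyperconvexity in this substantive way, and your fallback via Ohsawa--Takegoshi/H\"ormander again produces $L^2$ rather than sup-norm control, so it does not remove the obstruction you correctly identified.

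Second, your bound on $\sigma(\log|g_m|,\Phi^D_{o,\max})$ points the wrong way. Since $\log|g_m(w_0)|\le0$, the quotient $\log|g_m(w_0)|/\sigma$ is \emph{increasing} in $\sigma$, so a lower bound on the quotient requires a \emph{lower} bound $\sigma(\log|g_m|,\Phi^D_{o,\max})\ge m-O(1)$; the upper bound $\sigma\le m+o(m)$ that Direction~1 provides is useless here (with it alone the quotient could be arbitrarily negative, e.g. if $\sigma$ were bounded). The correct ingredient is that $\int|F_m|^2e^{-2m\Phi^D_{o,\max}}<+\infty$ forces $c_o^{F_m}(\Phi^D_{o,\max})\ge m$, whence by the comparison between jumping numbers and relative types recalled in Lemma \ref{thm:valu-jump} one gets $\sigma(\log|F_m|,\Phi^D_{o,\max})\ge m-1+\sigma(\log|f_0|,\Phi^D_{o,\max})$. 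This lower bound, not the upper bound from Direction~1, is what makes the limit $\log|g_m(w_0)|/\sigma(\log|g_m|,\Phi^D_{o,\max})\to\Phi^D_{o,\max}(w_0)$ work, and it is absent from your proposal.
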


\subsection{Some results about valuations}
In this section, we give some lemmas about valuations on $\mathcal{O}_o$.

Let $\mathfrak{q}_0$ be a nonzero ideal of $\mathcal{O}_o$, and let $\nu$ be a  valuation on $\mathcal{O}_o$. Assume that $\nu$ satisfies the statements $(1)$ and $(2)$ in Theorem \ref{the:char}, i.e., $\{(f,o)\in\mathcal{O}_o:v(f)>0\}$ is the maximal ideal of $\mathcal{O}_o$ and  
 $$\mathfrak{q}_0\not\subseteq\mathcal{I}\left(\frac{1}{m}\log|\mathfrak{a}_m^{\nu}|\right)_o$$ for any positive integer $m$,  where $$\mathfrak{a}_m^{\nu}:=\{(f,o)\in\mathcal{O}_o:\nu(f)\ge m\}$$ is an ideal of $\mathcal{O}_o$. Then we can use $\nu$ to construct a plurisubharmonic function $\varphi_{\nu}$.

\begin{Lemma}
	\label{l:valuation--psh} There exists a negative plurisubharmonic function $\varphi_{\nu}$ on $D$ such that for any $f\in\mathcal{O}(D)$ satisfying $|f|\le 1$ on $D$,
	$$\log|f|\le \nu(f)\varphi_{\nu}$$ on $D$,
	$|\mathfrak{q}_0|^2e^{-2\varphi_{\nu}}$ is not integrable near $o$ and $|\mathfrak{q}_0|^2e^{-2\varphi_{\nu}}|z|^N$ is integrable near $o$ for large enough $N$, where $D$ is  the unit ball in $\mathbb{C}^n$.
\end{Lemma}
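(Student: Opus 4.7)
My plan is to define $\varphi_\nu$ as the upper semicontinuous regularization of the envelope
\[\varphi_\nu := \left(\sup\left\{\frac{\log|f|}{\nu(f)} : f\in \mathcal{O}(D),\ \sup_D|f|\le 1,\ f(o)=0,\ f\not\equiv 0\right\}\right)^*,\]
which is a negative plurisubharmonic function on $D$ by Choquet's lemma (Lemma \ref{lem:Choquet}) and Proposition \ref{pro:Demailly}. The inequality $\log|f|\le \nu(f)\varphi_\nu$ is then built into the definition: for $f\in\mathcal{O}(D)$ with $|f|\le 1$ on $D$ and $f(o)=0$, $\log|f|/\nu(f)$ appears in the envelope; for $f$ with $f(o)\ne 0$, condition (1) forces $\nu(f)=0$, and the inequality reduces to $\log|f|\le 0$. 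For the integrability bound, plugging $f=z_i$ into the envelope yields $\varphi_\nu(z)\ge \max_i \log|z_i|/\nu(z_i)\ge (\log|z|-\log\sqrt{n})/\max_i\nu(z_i)$, so $\varphi_\nu\ge C\log|z|-C'$ for positive constants $C,C'$, giving the integrability of $|\mathfrak{q}_0|^2 e^{-2\varphi_\nu}|z|^N$ near $o$ for large $N$ (since $|\mathfrak{q}_0|$ is locally bounded).

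The non-integrability of $|\mathfrak{q}_0|^2 e^{-2\varphi_\nu}$ near $o$ is the main difficulty. My plan is to apply Proposition \ref{p:effect_GZ} combined with Remark \ref{rem:effect_GZ} to an increasing approximation of $\varphi_\nu$. Via Choquet I would extract a countable subfamily $(f_j)_{j\ge 1}$ from the envelope with $\varphi_\nu=(\sup_j \log|f_j|/\nu(f_j))^*$, enlarged so as to include (after suitable normalization) generators of every $\mathfrak{a}_m^\nu$. Setting $\phi_M:=\max_{j\le M}\log|f_j|/\nu(f_j)$ produces an increasing sequence of plurisubharmonic functions on $D$ whose pointwise supremum has upper regularization $\varphi_\nu$. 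If, for every $M$, $|\mathfrak{q}_0|^2 e^{-2\phi_M}$ is non-integrable near $o$, then for a generator $g\in\mathfrak{q}_0$ selected via pigeonhole to witness non-containment at infinitely many $M$, Remark \ref{rem:effect_GZ} yields the infimum condition $\inf_M C_{g,\phi_M}(U)>0$, and Proposition \ref{p:effect_GZ} forces $|g|^2 e^{-2\varphi_\nu}$, hence $|\mathfrak{q}_0|^2 e^{-2\varphi_\nu}$, to be non-integrable near $o$.

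The principal obstacle is therefore to verify the non-integrability of $|\mathfrak{q}_0|^2 e^{-2\phi_M}$ at each finite level $M$ from condition (2). My strategy is to locate an integer $m_M$ (depending on $M$) such that $\phi_M\le \psi_{m_M}+O(1)$ near $o$, where $\psi_m=\frac{1}{m}\log|\mathfrak{a}_m^\nu|$; this gives $\mathcal{I}(\phi_M)_o\subseteq \mathcal{I}(\psi_{m_M})_o$, so the non-containment $\mathfrak{q}_0\not\subseteq \mathcal{I}(\psi_{m_M})_o$ from (2) propagates to $\mathfrak{q}_0\not\subseteq \mathcal{I}(\phi_M)_o$. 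For a single $f_j\in \mathfrak{a}_{n_j}^\nu$ with integer $n_j=\nu(f_j)$, the estimate $\log|f_j|/\nu(f_j)\le \psi_{n_j}+O(1/n_j)$ follows from the generator-coefficient identity $|f_j|\le |\vec{a}_j|\cdot |\mathfrak{a}_{n_j}^\nu|$ on a neighborhood of $o$. The hard part is to produce a \emph{uniform} comparison across the finite family defining $\phi_M$, reconciling the varying $n_j$'s and coefficient bounds; this is where I expect the main work to lie, and it should require both a judicious choice of $m_M$ tied to the valuation data and exploitation of the Noetherian property of $\mathcal{O}_o$ to control the coefficients $\vec{a}_j$ uniformly. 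Once this uniform bound is in hand, the two propositions above close the argument.
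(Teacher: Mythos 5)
Your overall skeleton (regularized envelope, Choquet's lemma, increasing finite maxima, then Proposition \ref{p:effect_GZ} with Remark \ref{rem:effect_GZ}) is the same as the paper's, and the negativity, the inequality $\log|f|\le\nu(f)\varphi_\nu$, and the lower bound $\varphi_\nu\ge\max_i\log|z_i|/\nu(z_i)$ are all fine. But the step you yourself flag as ``where the main work lies'' --- comparing $\phi_M=\max_{j\le M}\log|f_j|/\nu(f_j)$ with some $\frac{1}{m}\log|\mathfrak{a}_m^\nu|$ --- is a genuine gap, and the route you sketch for it does not work: you write ``for a single $f_j\in\mathfrak{a}_{n_j}^\nu$ with integer $n_j=\nu(f_j)$,'' but $\nu(f_j)$ is in general an irrational real number, so $f_j$ need not lie in $\mathfrak{a}_{\lceil\nu(f_j)\rceil}^\nu$ with the right normalization, and no integer power $f_j^{k}$ has $k\nu(f_j)$ exactly equal to a common integer level across the finite family. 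The obstruction is arithmetic (incommensurability of the values $\nu(f_j)$), not a matter of controlling generator coefficients, so the Noetherian property will not rescue it; if one tries $k_j=\lceil m/\nu(f_j)\rceil$ one only gets $\phi_M\le\frac{1}{m+c}\log|\mathfrak{a}_m^\nu|+O(1)$ with $c>0$, and non-integrability of $|\mathfrak{q}_0|^2e^{-\frac{2}{m}\log|\mathfrak{a}_m^\nu|}$ does not pass to the exponent $\frac{2}{m+c}$.

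The paper's fix is made at the level of the definition of the envelope: it takes the supremum of $\log|f|/a_f$ over all \emph{rational} $a_f\in\mathbb{Q}\cap(0,\nu(f)]$ rather than over $\log|f|/\nu(f)$. Since $|f|\le1$, this does not change the regularized envelope, but now each finite maximum $\tilde\varphi_k=\max_{j\le k}\log|f_j|/a_{f_j}$ admits a common positive integer $m_k$ with $m_k/a_{f_j}\in\mathbb{Z}_{>0}$ for all $j\le k$, so that $\tilde\varphi_k=\frac{1}{m_k}\log\max_j\bigl|f_j^{m_k/a_{f_j}}\bigr|$ with $\nu\bigl(f_j^{m_k/a_{f_j}}\bigr)=\frac{m_k}{a_{f_j}}\nu(f_j)\ge m_k$; hence $\tilde\varphi_k\le\frac{1}{m_k}\log|\mathfrak{a}_{m_k}^\nu|+O(1)$ and hypothesis $(2)$ gives the non-integrability of $|\mathfrak{q}_0|^2e^{-2\tilde\varphi_k}$ at every finite level, after which your intended application of Proposition \ref{p:effect_GZ} and Remark \ref{rem:effect_GZ} closes the argument exactly as you describe. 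Without this (or an equivalent rational-approximation device) your proof is incomplete at its central step.
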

\begin{proof}
		Denote 
	\begin{equation}
		\nonumber\begin{split}
			\varphi(w)=\sup\Bigg\{\frac{\log|f(w)|}{a_f} : f\in \mathcal{O}(D),& \ \sup_D|f|\le 1, \ f(o)=0, \  f\not\equiv 0\\
			&\&\,a_f\in\mathbb{Q}\cap(0,\nu(f)]\text{ is a constant}\Bigg\}
		\end{split}
	\end{equation}
on $D$.
	 Let $\varphi_{\nu}=\varphi^*$ be the upper semicontinuous regularization of $\varphi$, which is a plurisubharmonic function on $D$. 
	
	By Lemma \ref{lem:Choquet}, there is a sequence  $\left\{\frac{\log|f_j(w)|}{a_{f_j}}\right\}_{j\in\mathbb{Z}_{>0}}$ such that 
	$$\varphi_{\nu}(w)=\left(\sup_j\frac{\log|f_j(w)|}{a_{f_j}}\right)^*$$
	on $D$.
	Denote
	$$\tilde\varphi_k(w):=\max_{1\le j\le k}\frac{\log|f_j(w)|}{a_{f_j}}=\log\max_{1\le j\le k}|f_j(w)|^{\frac{1}{a_{f_j}}}$$ for any positive integer $k$. There exists a positive number $m_k$ such that $\frac{m_k}{a_{f_j}}\in\mathbb{Z}_{>0}$ for any $1\le j\le k$. Thus, $$\tilde\varphi_k(w)=\frac{1}{m_k}\log\max_{1\le j\le k}\big|f_j(w)^{\frac{m_k}{a_{f_j}}}\big|$$ and $$\nu\big(f_j^{\frac{m_k}{a_{f_j}}}\big)={\frac{m_k}{a_{f_j}}}\nu(f_j)\ge m_k.$$
	Then Statement $(2)$ shows that $|\mathfrak{q}_0|^2e^{-2\tilde\varphi_k}$ is not integrable near $o$ for any $k$. Note that $\tilde\varphi_k$ is increasingly convergent to $\varphi_{\nu}$ a.e. by Proposition \ref{pro:Demailly}. It follows from Proposition \ref{p:effect_GZ} and Remark \ref{rem:effect_GZ} that 
	$|\mathfrak{q}_0|^2e^{-2\varphi_{\nu}}$
	is not integrable near $o$.
	By definition, we have 
	\begin{equation}
		\nonumber
		\log|f|\le \nu(f)\varphi_{\nu}
	\end{equation}
on $D$,
which shows that 
$$\max_{1\le j\le n}\frac{1}{\nu(z_j)}\log|z_j|\le \varphi_{\nu},$$
where $(z_1,\ldots,z_m)$ is the coordinate system near $o$ and $\nu(z_j)>0$ for any $1\le j\le n$. Thus,  $|\mathfrak{q}_0|^2e^{-2\varphi_{\nu}}|z|^N$ is integrable near $o$ for large enough $N$.
\end{proof}

We recall
a closedness property of the ideals of $\mathcal O_{o}$.

\begin{Lemma}[see \cite{G-R}]
	\label{closedness}
	Let $\mathfrak{a}$ be an ideal of $\mathcal O_{o}$, and let $\{f_j\}_j$ be a sequence of  holomorphic functions on an open neighborhood $U$ of  $o$. Assume that all germs $(f_{j},o)\in\mathfrak{a}$ and the $f_j$ converge uniformly on $U$ towards  a holomorphic function $f$. Then $(f,o)\in \mathfrak{a}$.	
\end{Lemma}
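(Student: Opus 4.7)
The plan is to reduce the statement to a question in the $\mathfrak{m}$-adic completion $\widehat{\mathcal{O}}_o$ of $\mathcal{O}_o$, where convergence of holomorphic functions becomes convergence of Taylor coefficients, and then to recover the conclusion in $\mathcal{O}_o$ via the faithful flatness of the completion.

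I would proceed in four steps. First, invoking the Noetherian property of $\mathcal{O}_o$, pick finitely many generators $g_1,\ldots,g_r$ of $\mathfrak{a}$. Second, from uniform convergence of $f_j$ to $f$ on $U$, Cauchy's integral formula yields convergence of all Taylor coefficients at $o$; therefore, the images $\widehat{f}_j$ in $\widehat{\mathcal{O}}_o\cong\mathbb{C}[[z_1,\ldots,z_n]]$ converge to $\widehat{f}$ in the $\mathfrak{m}$-adic topology. Third, each $\widehat{f}_j$ lies in the finitely generated submodule $\mathfrak{a}\widehat{\mathcal{O}}_o$, which is closed in the $\mathfrak{m}$-adic topology by the Artin--Rees lemma applied to the Noetherian local ring $\widehat{\mathcal{O}}_o$; hence $\widehat{f}\in\mathfrak{a}\widehat{\mathcal{O}}_o$. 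Fourth, the faithful flatness of $\widehat{\mathcal{O}}_o$ over $\mathcal{O}_o$ gives $\mathfrak{a}\widehat{\mathcal{O}}_o\cap\mathcal{O}_o=\mathfrak{a}$, so $(f,o)\in\mathfrak{a}$.

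The main technical obstacle is the invocation of the faithful flatness of the completion, which is itself a nontrivial theorem in local analytic algebra based on the Weierstrass preparation and division theorems. If one wishes to avoid completion altogether, an alternative route is to apply Oka's coherence theorem to obtain a finite presentation of the module of relations among $g_1,\ldots,g_r$ on a common neighborhood, and then use the open mapping theorem in the Fr\'echet topology of $\mathcal{O}(K)$, with $K\ni o$ a compact polydisc, to show that the image of the evaluation map $(h_1,\ldots,h_r)\mapsto \sum_k h_k g_k$ is closed; uniform convergence of the $f_j$ on $U$ then transfers into this closed image and forces $f$ to lie in $\mathfrak{a}$. Either way, the crux is that a finitely generated ideal of $\mathcal{O}_o$, viewed through a suitable topological lens, has closed image under the obvious surjection from a free module.
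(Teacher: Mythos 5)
The paper offers no proof of this lemma (it is quoted from Grauert--Remmert), so I am judging your argument on its own terms. There is a genuine gap at the junction of your Steps 2 and 3: uniform convergence of $f_j$ to $f$ on $U$ gives, via the Cauchy estimates, convergence of each individual Taylor coefficient as a complex number, i.e.\ convergence of $\widehat f_j$ to $\widehat f$ in the product (coefficientwise) topology on $\mathbb{C}[[z_1,\dots,z_n]]$ --- \emph{not} in the $\mathfrak{m}$-adic topology. Convergence $\widehat f_j\to\widehat f$ in the $\mathfrak{m}$-adic topology would mean that for every $N$ one eventually has $\widehat f_j-\widehat f\in\mathfrak{m}^N$, i.e.\ the low-order Taylor coefficients of $f_j$ eventually \emph{equal} those of $f$; the constants $f_j=1/j\to 0$ already violate this. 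Consequently the $\mathfrak{m}$-adic closedness of $\mathfrak{a}\widehat{\mathcal{O}}_o$ (Artin--Rees/Krull) says nothing about your sequence, and Step 3 does not go through as written.

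The repair is standard and in fact simplifies your argument: for each $N$ the quotient $\mathcal{O}_o/\mathfrak{m}^N$ is a finite-dimensional $\mathbb{C}$-vector space, the truncations of the $f_j$ converge there to the truncation of $f$ (this is exactly what coefficientwise convergence gives), and the image of $\mathfrak{a}$ is a linear subspace, hence closed; therefore $f\in\mathfrak{a}+\mathfrak{m}^N$ for every $N$, and the Krull intersection theorem $\bigcap_N(\mathfrak{a}+\mathfrak{m}^N)=\mathfrak{a}$ in the Noetherian local ring $\mathcal{O}_o$ concludes. This version needs neither the completion nor faithful flatness. Your alternative functional-analytic route is indeed the classical one (Cartan's closure-of-modules theorem), but note that the open mapping theorem alone does not yield closedness of the image of $(h_1,\dots,h_r)\mapsto\sum_k h_kg_k$; that closedness is precisely the hard content, usually obtained from the Weierstrass division theorem with bounds or from the theory of privileged neighborhoods.
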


The following lemma will be used in the proof of Theorem \ref{the:char}.

\begin{Lemma}
	\label{l:1029_1}
	Let $\nu$ be a valuation on $\mathcal{O}_o$, and let $\{f_j\}_j$ be a sequence of  holomorphic functions on an open neighborhood $U$ of  $o$. Assume that  $f_j$ converge uniformly on $U$ towards  a holomorphic function $f$. Then $\nu(f)\ge\limsup_{j\rightarrow+\infty}\nu(f_j)$.
	
	Especially, assume that $\{(h,o)\in\mathcal{O}_o:\nu(h)>0\}$ is the maximal ideal of $\mathcal{O}_o$, and let $g(w)=\sum_{\alpha\in\mathbb{Z}_{\ge0}^{n}}a_{\alpha}w^{\alpha}$ (Taylor expansion) be a holomorphic function near $o$, then we have 
	$$\nu(g)=\lim_{j\rightarrow+\infty}\nu(g_j),$$
	 where $g_j(w):=\sum_{|\alpha|\le j}a_{\alpha}w^{\alpha}$ and $|\alpha|:=\sum_{1\le k\le n}\alpha_k$ for any $\alpha\in\mathbb{Z}_{\ge0}^{n}.$
\end{Lemma}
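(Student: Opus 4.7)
The plan is to reduce both assertions to the closedness Lemma~\ref{closedness} after observing that, for every real number $t\ge 0$, the set
\[\mathfrak{a}_t:=\{(h,o)\in\mathcal{O}_o:\nu(h)\ge t\}\]
is an ideal of $\mathcal{O}_o$. Indeed, axiom $(1)$ gives $\mathcal{O}_o\cdot\mathfrak{a}_t\subset\mathfrak{a}_t$ (since $\nu(gh)=\nu(g)+\nu(h)\ge\nu(h)$) and axiom $(2)$ gives closure under addition. With the convention $\nu(0)=+\infty$, this is genuinely an ideal.

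For the first assertion, I would fix any $t<\limsup_{j\to\infty}\nu(f_j)$. Some subsequence $\{f_{j_k}\}$ has $\nu(f_{j_k})\ge t$, so $(f_{j_k},o)\in\mathfrak{a}_t$; since $f_{j_k}\to f$ uniformly on $U$, Lemma~\ref{closedness} forces $(f,o)\in\mathfrak{a}_t$, i.e.\ $\nu(f)\ge t$. Letting $t\nearrow\limsup_{j\to\infty}\nu(f_j)$ yields the desired inequality.

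For the second assertion, note first that $g_j\to g$ uniformly on a small polydisc around $o$, so the first part already gives $\nu(g)\ge\limsup_{j\to\infty}\nu(g_j)$. For the reverse direction, set $c:=\min_{1\le k\le n}\nu(z_k)$; the hypothesis that $\{(h,o):\nu(h)>0\}$ is the maximal ideal forces every $\nu(z_k)>0$, hence $c>0$. For any $\alpha$ with $|\alpha|>j$ we have $\nu(w^\alpha)=\sum_k\alpha_k\nu(z_k)\ge c(j+1)$, so by the ultrametric axiom each finite truncation $\sum_{j<|\alpha|\le M}a_\alpha w^\alpha$ lies in $\mathfrak{a}_{c(j+1)}$. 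These truncations converge uniformly on a small polydisc to the tail $g-g_j$, and applying the first assertion again gives $\nu(g-g_j)\ge c(j+1)$.

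Assume $g\not\equiv 0$, so $\nu(g)<+\infty$. For $j$ large enough that $c(j+1)>\nu(g)$, the standard strict inequality property of valuations (which follows from axioms $(1)$--$(2)$: if $\nu(\phi)\ne\nu(\psi)$ then $\nu(\phi+\psi)=\min\{\nu(\phi),\nu(\psi)\}$) gives
\[\nu(g_j)=\nu\bigl(g-(g-g_j)\bigr)=\min\{\nu(g),\nu(g-g_j)\}=\nu(g),\]
so $\lim_{j\to\infty}\nu(g_j)=\nu(g)$; the case $g\equiv 0$ is trivial since then every $g_j\equiv 0$. I expect the only mildly delicate point to be verifying that the infinite tail $\sum_{|\alpha|>j}a_\alpha w^\alpha$ is a \emph{uniform} limit of its finite truncations on a fixed neighborhood of $o$, so that the first assertion can be applied to it; this is simply the standard uniform convergence of the Taylor series of a holomorphic function on a slightly smaller polydisc.
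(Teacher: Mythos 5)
Your proposal is correct and follows essentially the same route as the paper: both parts reduce to the closedness of the ideals $\{h:\nu(h)\ge t\}$ under uniform limits (Lemma \ref{closedness}), and the second part estimates $\nu(g-g_j)\ge jc$ by applying the first part to the finite truncations of the tail, then concludes via the ultrametric inequality once $jc$ exceeds $\nu(g)$. The only cosmetic difference is that you use the sharp equality $\nu(\phi+\psi)=\min\{\nu(\phi),\nu(\psi)\}$ when the values differ, whereas the paper only needs the inequality $\nu(g_j)\ge\min\{\nu(g),\nu(g-g_j)\}$ combined with the already-established $\limsup_j\nu(g_j)\le\nu(g)$.
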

\begin{proof}
	For any $$x<\limsup_{j\rightarrow+\infty}\nu(f_j),$$ there exists a subsequence of $\{f_j\}_j$ denoted also by $\{f_j\}_j$ such that $\nu(f_j)>x$ for any $j$. Note that $\{(h,o)\in\mathcal{O}_o:\nu(h)>x\}$ is an ideal of $\mathcal{O}_o$ and $f_j$ converge uniformly  towards   $f$, then it follows from Lemma \ref{l:1029_1} that $\nu(f)>x$. Thus, we have $$\nu(f)\ge\limsup_{j\rightarrow+\infty}\nu(f_j).$$

	Note that $g_j$ converge uniformly  towards   $g$ on a neighborhood of $o$, thus we have 
	\begin{equation}
		\label{eq:1029aa}\nu(g)\ge\limsup_{j\rightarrow+\infty}\nu(g_j).
	\end{equation}
	As $\{(h,o)\in\mathcal{O}_o:\nu(h)>0\}$ is the maximal ideal of $\mathcal{O}_o$, we have $$c:=\min_{1\le l\le n}\{\nu(w_l)\}>0,$$
	which implies that 
	\begin{equation}
		\nonumber\begin{split}
			\nu(g-g_j)&\ge\limsup_{k\rightarrow+\infty}\nu(g_k-g_j)\\
			&\ge\limsup_{k\rightarrow+\infty}\min_{j<|\alpha|\le k}\{\nu(a_{\alpha}w^{\alpha})\}\\
			&\ge jc.
		\end{split}
	\end{equation}
 Take $j_0$ such that $j_0c>\nu(g)$. For any $j>j_0$, we have 
$$\nu(g_j)\ge\min\{\nu(g),\nu(g-g_j)\}=\nu(g).$$
Combining equality \eqref{eq:1029aa}, we have 
$$\nu(g)=\lim_{j\rightarrow+\infty}\nu(g_j).$$

Thus, Lemma \ref{l:1029_1} holds.
\end{proof}

\section{Proofs of Theorem \ref{thm:compute} and Remark \ref{r:A(nu)}}

In this section, we prove Theorem \ref{thm:compute} and Remark \ref{r:A(nu)}.

Firstly,  we recall some results.

Given a local Zhou weight $\Phi_{o,\max}$ near $o$ related to $|f_0|^2$, the following lemma shows the jumping number $c_o^{G}(\Phi_{o,\max})$ and the Zhou valuation $\nu(G,\Phi_{o,\max}):=\sigma(\log|G|,\Phi_{o,\max})$ for all $(G,o)\in\mathcal{O}_o$ are linearly controlled by each other.

\begin{Lemma}[\cite{BGMY-valuation}]
	\label{thm:valu-jump}
	For any holomorphic function $G$ near $o$,	
	we have the following relation between the jumping number $c^G_o(\Phi_{o,\max})$ and the Zhou valuation $\nu(\cdot,\Phi_{o,\max})$,
	\begin{equation*}
		\begin{split}
			\nu(G,\Phi_{o,\max})+c_o(\Phi_{o,\max})
			\le& c^G_o(\Phi_{o,\max})\\
			\le& \nu(G,\Phi_{o,\max})-\sigma(\log|f_0|,\Phi_{o,\max})+1.
		\end{split}
	\end{equation*}
	Especially, if $|f_0|^2\equiv1$, we have 
	$$\nu(G,\Phi_{o,\max})+1=c^G_o(\Phi_{o,\max}).$$
\end{Lemma}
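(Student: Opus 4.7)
The statement has three parts — a lower bound, an upper bound, and the equality in the case $|f_0|^2\equiv 1$ — and I would treat them separately. The lower bound is essentially definitional; the upper bound is the real content, and will rely on the maximality property (3) of the Zhou weight.

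For the lower bound, my plan is: start from the definition $\nu(G,\Phi_{o,\max})=\sigma(\log|G|,\Phi_{o,\max})$, so for any $c'<\nu(G,\Phi_{o,\max})$ there is a constant $C$ with $\log|G|\le c'\Phi_{o,\max}+C$ near $o$. Hence
\[
|G|^2\,e^{-2t\Phi_{o,\max}}\le e^{2C}\,e^{-2(t-c')\Phi_{o,\max}}
\]
near $o$, and whenever $t-c'<c_o(\Phi_{o,\max})$ — the case $t\le c'$ being trivial since $\Phi_{o,\max}\le 0$ — the majorant is locally integrable by the definition of $c_o(\Phi_{o,\max})$. This gives $c_o^G(\Phi_{o,\max})\ge c'+c_o(\Phi_{o,\max})$, and sending $c'\uparrow\nu(G,\Phi_{o,\max})$ finishes the lower bound.

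For the upper bound I would argue by contradiction. Writing $\nu:=\nu(G,\Phi_{o,\max})$ and $\tau:=\sigma(\log|f_0|,\Phi_{o,\max})$, suppose there exists $c\in(\nu-\tau+1,\,c_o^G(\Phi_{o,\max}))$, so $|G|^2e^{-2c\Phi_{o,\max}}\in L^1_{\mathrm{loc}}$ near $o$. My goal is to construct a plurisubharmonic function $\tilde{\varphi}$ on a neighborhood of $o$ satisfying $\tilde{\varphi}\ge\Phi_{o,\max}+O(1)$ but $\tilde{\varphi}\ne\Phi_{o,\max}+O(1)$, while keeping $|f_0|^2e^{-2\tilde{\varphi}}$ non-integrable near $o$; such a $\tilde\varphi$ would directly contradict condition (3) of Definition \ref{def:max_relat}. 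To build $\tilde{\varphi}$ I would apply the Demailly--Bergman regularization of Lemma \ref{l:appro-Berg} to $c\Phi_{o,\max}$, obtaining weights $\psi_m:=\frac{1}{2m}\log\sum_k|\sigma_{m,k}|^2$ with analytic singularities and $\psi_m\to c\Phi_{o,\max}$, and then form an upper envelope of max-type combinations $\max\bigl\{\Phi_{o,\max},\,\lambda\psi_m+\mu\log|f_0|\bigr\}$ with coefficients $\lambda,\mu$ calibrated to the gap $c-(\nu-\tau+1)>0$. The envelope is regularized using Lemma \ref{lem:Choquet} and Proposition \ref{pro:Demailly}, and non-integrability of $|f_0|^2e^{-2\tilde\varphi}$ is propagated from the increasing approximants using Proposition \ref{p:effect_GZ} together with Remark \ref{rem:effect_GZ}.

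The main obstacle is getting this quantitative construction right. The most naive candidates — for instance convex combinations such as $(1-\varepsilon)\Phi_{o,\max}+\tfrac{\varepsilon}{c}\log|G|$ — point the wrong way, because $\Phi_{o,\max}\le 0$ means that decreasing the leading coefficient produces functions \emph{larger} than $\Phi_{o,\max}$, not smaller, so one has to work with maxima rather than sums and carefully tune the coefficients so that strict improvement over $\Phi_{o,\max}$ occurs at some point while the required non-integrability survives passage to the envelope. The special case $|f_0|^2\equiv 1$ (so $\tau=0$) is cleaner: applying the lower bound with $G\equiv 1$ and combining it with condition (2) of Definition \ref{def:max_relat} forces $c_o(\Phi_{o,\max})=1$, whereupon both bounds collapse to the equality $c_o^G(\Phi_{o,\max})=\nu(G,\Phi_{o,\max})+1$.
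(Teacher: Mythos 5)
The paper does not prove this lemma at all (it is imported verbatim from \cite{BGMY-valuation}), so your proposal can only be judged on its own terms. Your lower-bound argument is correct and complete, but the two remaining parts have genuine gaps. For the upper bound, the competitor you propose to feed into the maximality condition (3) does not actually involve $G$: the Bergman weights $\psi_m$ of Lemma \ref{l:appro-Berg} applied to $c\Phi_{o,\max}$ converge back to $c\Phi_{o,\max}$, so an envelope of $\max\{\Phi_{o,\max},\lambda\psi_m+\mu\log|f_0|\}$ is asymptotically a max of multiples of $\Phi_{o,\max}$ and $\log|f_0|$, and the hypothesis ``$|G|^2e^{-2c\Phi_{o,\max}}$ is integrable'' is never exploited; no calibration of $\lambda,\mu$ can then produce information about $\sigma(\log|G|,\Phi_{o,\max})$. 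The construction that works is direct and needs neither regularization nor Choquet: from $\log|f_0|\le\tau'\Phi_{o,\max}+O(1)$ for $\tau'<\tau$ close to $\tau$, integrability of $|G|^2e^{-2c\Phi_{o,\max}}$ gives integrability of $|f_0|^2|G|^2e^{-2c''\Phi_{o,\max}}$ with $c'':=c+\tau'>\nu+1\ge 1$. Set $\tilde\varphi:=\max\bigl\{\Phi_{o,\max},\tfrac{1}{c''-1}\log|G|+C_0\bigr\}$, which is already plurisubharmonic and $\ge\Phi_{o,\max}$. On the set where the second entry wins one has $-2\Phi_{o,\max}<2\log|G|-2c''\Phi_{o,\max}+O(1)$, so that region contributes a finite amount to $\int|f_0|^2e^{-2\Phi_{o,\max}}$; hence the divergence of that integral is carried by the region where $\tilde\varphi=\Phi_{o,\max}$, and $|f_0|^2e^{-2\tilde\varphi}$ is non-integrable. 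Condition (3) then forces $\tilde\varphi=\Phi_{o,\max}+O(1)$, i.e.\ $\log|G|\le(c''-1)\Phi_{o,\max}+O(1)$, i.e.\ $\nu\ge c''-1>\nu$, a contradiction. Note the logical shape: you do not need to exhibit $\tilde\varphi\ne\Phi_{o,\max}+O(1)$ a priori (the ``calibration'' you flag as the unresolved obstacle); maximality is used affirmatively to extract the inequality $\log|G|\le(c''-1)\Phi_{o,\max}+O(1)$, and the contradiction is that this inequality is too strong. This is exactly the content of the Tian-function linearity recorded as Lemma \ref{l:tianfunction} with $\psi=\log|G|$, $t=1$.

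The special case also has a gap. Applying your lower bound with $G\equiv1$ reads $0+c_o(\Phi_{o,\max})\le c_o(\Phi_{o,\max})$ and is vacuous, and condition (2) only yields $c_o(\Phi_{o,\max})\le1$; what the equality actually requires is the reverse inequality $c_o(\Phi_{o,\max})\ge1$, and that needs condition (3): if $e^{-2c\Phi_{o,\max}}$ were non-integrable for some $c<1$, then $c\Phi_{o,\max}\ge\Phi_{o,\max}$ (as $\Phi_{o,\max}\le0$) would force $c\Phi_{o,\max}=\Phi_{o,\max}+O(1)$ by maximality, which is impossible since $\Phi_{o,\max}$ is unbounded below near $o$. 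With $c_o(\Phi_{o,\max})=1$ established this way, the two bounds do collapse to the stated equality.
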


Let $\psi$ be a plurisubharmonic function near $o$. Recall that the Tian function (see \cite{BGMY-valuation}) was defined as 
$$\mathrm{Tn}(t;\psi):=\sup\{c:|\mathfrak{q}_0|^2e^{2t\psi-2c\Phi_{o,\max}}\text{ is integrable near }o\}$$
for any $t\in\mathbb{R}$.
Then we have 
\begin{Lemma}[\cite{BGMY-valuation}]
	\label{l:tianfunction}
	$\mathrm{Tn}(t;\psi)=\mathrm{Tn}(0;\psi)+\sigma(\psi,\Phi_{o,\max})t$
	holds for any $t\ge0$.
\end{Lemma}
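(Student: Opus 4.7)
Write $s:=\sigma(\psi,\Phi_{o,\max})$ and $\alpha:=\mathrm{Tn}(0;\psi)$. The plan is to establish the identity as two opposite inequalities for $t\ge 0$.

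The lower bound $\mathrm{Tn}(t;\psi)\ge\alpha+st$ is a direct computation. By definition of the relative type, for any $s'\in[0,s)$ there is a constant $C_{s'}$ with $\psi\le s'\Phi_{o,\max}+C_{s'}$ near $o$. Multiplying by $t\ge 0$ and exponentiating gives
\[
|\mathfrak{q}_0|^2e^{2t\psi-2c\Phi_{o,\max}}\le e^{2tC_{s'}}\,|\mathfrak{q}_0|^2e^{-2(c-ts')\Phi_{o,\max}}
\]
near $o$, and integrability of the right-hand side whenever $c-ts'<\alpha$ yields $\mathrm{Tn}(t;\psi)\ge\alpha+ts'$; letting $s'\uparrow s$ closes this direction.

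For the reverse inequality $\mathrm{Tn}(t;\psi)\le\alpha+st$, I would argue by contradiction, assuming there exist $t_0>0$ and $c$ with $\alpha+st_0<c<\mathrm{Tn}(t_0;\psi)$, so that $|\mathfrak{q}_0|^2e^{2t_0\psi-2c\Phi_{o,\max}}$ is integrable near $o$. The strategy is to first reduce $\psi$ to analytic singularities via the Demailly approximants $\psi_m=\tfrac{1}{2m}\log\sum_k|\sigma_{m,k}|^2$ from Lemma~\ref{l:appro-Berg}: for such $\psi_m$, the weight $|\mathfrak{q}_0|^2e^{2m\psi_m}$ equals (up to bounded factors) the squared norm of a finitely generated ideal, so the Tian integrals reduce to jumping-number questions for ideals. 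For this analytic-singularity case, the identity $\mathrm{Tn}(t;\psi_m)=\mathrm{Tn}(0;\psi_m)+t\,\sigma(\psi_m,\Phi_{o,\max})$ is to be deduced from the valuation--jumping number comparison of Lemma~\ref{thm:valu-jump} applied to the relevant products of generators, together with the additivity $\nu(fg)=\nu(f)+\nu(g)$ of the Zhou valuation associated with $\Phi_{o,\max}$. Passing to the limit $m\to\infty$ then transfers the identity back to $\psi$. Equivalently, one may try to construct directly a negative plurisubharmonic $\varphi'$ near $o$ with $\varphi'\ge\Phi_{o,\max}+O(1)$, $|\mathfrak{q}_0|^2e^{-2\varphi'}$ not integrable, but $\varphi'-\Phi_{o,\max}$ unbounded above, contradicting clause (3) of Definition~\ref{def:max_relat}; this $\varphi'$ is a suitable convex combination of $\Phi_{o,\max}$ and some analytic-singularity approximant of $\psi$, calibrated by the assumed integrability.

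The main obstacle is controlling the two convergences $\sigma(\psi_m,\Phi_{o,\max})\to s$ and $\mathrm{Tn}(t;\psi_m)\to\mathrm{Tn}(t;\psi)$ simultaneously. The relative type is in general only upper semicontinuous along decreasing plurisubharmonic sequences, and the Tian number is a supremum over an integrability condition which is not automatically stable under pointwise limits. Both of these issues are to be reconciled using the effective openness of Guan--Zhou (Proposition~\ref{p:effect_GZ} together with Remark~\ref{rem:effect_GZ}) combined with the maximality clause (3) of Definition~\ref{def:max_relat}, which together force the one-sided semicontinuities to become two-sided equalities in the limit.
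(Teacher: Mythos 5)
First, note that the paper does not prove this lemma: it is recalled verbatim from \cite{BGMY-valuation}, so there is no in-paper argument to compare against. Judged on its own merits, your proposal establishes only half of the identity. The lower bound $\mathrm{Tn}(t;\psi)\ge\mathrm{Tn}(0;\psi)+t\,\sigma(\psi,\Phi_{o,\max})$ is correct and complete as written.

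The upper bound, which is the substantive half, is not proved. Your primary route --- prove the identity for the Demailly approximants $\psi_m$ and ``pass to the limit'' --- fails for a directional reason. Lemma \ref{l:appro-Berg} gives $\psi_m\ge\psi-c_1/m$, hence $e^{2t\psi}\le e^{O(1)}e^{2t\psi_m}$, so $\mathrm{Tn}(t;\psi_m)\le\mathrm{Tn}(t;\psi)$ and likewise $\sigma(\psi_m,\Phi_{o,\max})\le\sigma(\psi,\Phi_{o,\max})$. Consequently, even granting the identity for each $\psi_m$ and the convergence of the relative types (which itself requires the tameness of the Zhou weight, not invoked in your sketch), the limit yields only $\mathrm{Tn}(t;\psi)\ge\lim_m\bigl(\mathrm{Tn}(0;\psi_m)+t\,\sigma(\psi_m,\Phi_{o,\max})\bigr)$, i.e.\ the lower bound again; there is no pointwise estimate $\psi\ge\psi_m+O(1)$ that would let non-integrability of $|\mathfrak{q}_0|^2e^{2t\psi_m-2c\Phi_{o,\max}}$ descend to $\psi$. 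Your alternative route --- produce a plurisubharmonic $\varphi'\ge\Phi_{o,\max}+O(1)$ with $|\mathfrak{q}_0|^2e^{-2\varphi'}$ non-integrable but $\varphi'-\Phi_{o,\max}$ unbounded above, contradicting clause (3) of Definition \ref{def:max_relat} --- is the right germ of an idea, but constructing such a $\varphi'$ from the assumed integrability of $|\mathfrak{q}_0|^2e^{2t_0\psi-2c\Phi_{o,\max}}$ with $c>\mathrm{Tn}(0;\psi)+st_0$ is precisely the hard step, and ``calibrated by the assumed integrability'' does not supply it. Note that the obvious candidates, convex combinations of $\Phi_{o,\max}$ and $\psi/s$, need not dominate $\Phi_{o,\max}+O(1)$: the relative type only gives the upper bound $\psi\le s'\Phi_{o,\max}+O(1)$ for $s'<s$, never a lower bound of $\psi$ by a multiple of $\Phi_{o,\max}$. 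So the reverse inequality remains open in your write-up; you correctly identify the obstructions but do not overcome them.
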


Let $\nu$ be a Zhou valuation related to $|\mathfrak{q}_0|^2$, where $\mathfrak{q}_0$ is a nonzero ideal.  Let $\mathfrak{a}^{\nu}_{\bullet}=(\mathfrak{a}^{\nu}_m)_{m\ge1}$ be a graded sequence of ideals such that $\mathfrak{a}_m^{\nu}=\{(f,o)\in\mathcal{O}_o:\nu(f)\ge m\}$.
\begin{Lemma}\label{l:1018a}
	$\nu(\mathfrak{a}^{\nu}_{\bullet})=1$ and $\mathrm{lct}^{\mathfrak{q}}(\mathfrak{a}^{\nu}_{\bullet})=c_o^{\mathfrak{q}}(\Phi_{o,\max})$, where $\mathfrak{q}$ is any nonzero ideal of $\mathcal{O}_o$ and $\Phi_{o,\max}$ is the corresponding local Zhou weight of $\nu$.
\end{Lemma}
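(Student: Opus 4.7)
The plan is to prove the two equalities separately. The first, $\nu(\mathfrak{a}^\nu_\bullet)=1$, follows from the definition of $\mathfrak{a}^\nu_m$ combined with a power-and-ceiling argument; the second reduces to a single pointwise estimate on $\log|\mathfrak{a}^\nu_m|$, extracted from the Zhou-weight approximation formula of Lemma \ref{cor-approximation}.

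For $\nu(\mathfrak{a}^\nu_\bullet)=1$: by construction every $f\in\mathfrak{a}^\nu_m$ satisfies $\nu(f)\ge m$, so $\nu(\mathfrak{a}^\nu_m)\ge m$, giving $\nu(\mathfrak{a}^\nu_\bullet)\ge 1$. For the reverse bound, statement $(1)$ of Theorem \ref{the:char} (which applies to any Zhou valuation) ensures that $c:=\nu(z_1)>0$ for a coordinate function $z_1$ near $o$; setting $k_m:=\lceil m/c\rceil$, the element $z_1^{k_m}\in\mathfrak{a}^\nu_m$ has $\nu(z_1^{k_m})=k_m c\le m+c$, so $\nu(\mathfrak{a}^\nu_m)/m\to 1$.

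For $\mathrm{lct}^{\mathfrak{q}}(\mathfrak{a}^\nu_\bullet)=c_o^{\mathfrak{q}}(\Phi_{o,\max})$, the plan is to establish the stronger pointwise statement $m\cdot\mathrm{lct}^{\mathfrak{q}}(\mathfrak{a}^\nu_m)=c_o^{\mathfrak{q}}(\Phi_{o,\max})$ for every $m\ge 1$; taking $\sup$ over $m$ then yields the claim at once. The crucial input is the estimate
\[
\log|\mathfrak{a}^\nu_m|\le m\Phi_{o,\max}+O(1)\quad\text{near }o.
\]
I intend to derive this by passing from $\Phi_{o,\max}$ to the corresponding global Zhou weight $\Phi^D_{o,\max}=\Phi_{o,\max}+O(1)$ on a bounded strictly hyperconvex domain $D\ni o$ via Lemma \ref{l:local-glabal}; for each generator $h$ of $\mathfrak{a}^\nu_m$, the vanishing $h(o)=0$ (forced by $\nu(h)\ge m>0$) permits the rescaling $\tilde h:=h/\sup_D|h|$, after which Lemma \ref{cor-approximation} applied to $\tilde h$ yields $\log|\tilde h|\le \nu(\tilde h)\,\Phi^D_{o,\max}\le m\,\Phi^D_{o,\max}$ on $D$, where the second inequality uses $\Phi^D_{o,\max}\le 0$ and $\nu(\tilde h)=\nu(h)\ge m$. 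Summing finitely many such estimates over a set of generators delivers the desired bound for $\log|\mathfrak{a}^\nu_m|$.

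Granted this estimate, both halves of the target equality $m\cdot c_o^{\mathfrak{q}}(\log|\mathfrak{a}^\nu_m|)=c_o^{\mathfrak{q}}(\Phi_{o,\max})$ reduce to comparing the two integrands $|\mathfrak{q}|^2 e^{-2c\log|\mathfrak{a}^\nu_m|}$ and $|\mathfrak{q}|^2 e^{-2cm\Phi_{o,\max}}$, which the pointwise estimate makes comparable up to multiplicative constants near $o$; consequently their integrability thresholds in $c$ must coincide. I do not foresee any serious obstacle; the main delicate point is the correct bookkeeping of the $O(1)$ constants produced by the normalization of the generators and by the passage between $\Phi_{o,\max}$ and $\Phi^D_{o,\max}$.
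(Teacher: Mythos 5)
Your first half is fine and essentially the paper's argument: $\nu(\mathfrak{a}^{\nu}_m)\ge m$ gives one inequality, and a fixed element of positive finite value whose powers lie in $\mathfrak{a}^{\nu}_m$ with value at most $m+O(1)$ gives the other. The second half, however, contains a genuine gap. The ``stronger pointwise statement'' $m\cdot\mathrm{lct}^{\mathfrak{q}}(\mathfrak{a}^{\nu}_m)=c_o^{\mathfrak{q}}(\Phi_{o,\max})$ for \emph{every} $m$ is false in general. Take the monomial Zhou valuation on $\mathbb{C}^2$ with weights $(1,\tfrac32)$, $\mathfrak{q}=\mathcal{O}_o$, so $\Phi_{o,\max}=\max\{\log|z|,\tfrac23\log|w|\}$ and $c_o(\Phi_{o,\max})=1+\tfrac32=\tfrac52$; but $\mathfrak{a}^{\nu}_1=\mathfrak{m}$ and $1\cdot\mathrm{lct}(\mathfrak{a}^{\nu}_1)=2\neq\tfrac52$. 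In general $m\cdot\mathrm{lct}^{\mathfrak{q}}(\mathfrak{a}^{\nu}_m)$ increases strictly to its supremum, so no single $m$ suffices.

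The source of the error is that your estimate $\log|\mathfrak{a}^{\nu}_m|\le m\Phi_{o,\max}+O(1)$ is one-sided: it gives $e^{-2c\log|\mathfrak{a}^{\nu}_m|}\ge C\,e^{-2cm\Phi_{o,\max}}$ and hence only $m\cdot\mathrm{lct}^{\mathfrak{q}}(\mathfrak{a}^{\nu}_m)\le c_o^{\mathfrak{q}}(\Phi_{o,\max})$. The integrands are \emph{not} comparable up to multiplicative constants in both directions: in the example above with $m=1$, restricting to $\{z=0\}$ shows $\log\max\{|z|,|w|\}\ge\max\{\log|z|,\tfrac23\log|w|\}+O(1)$ fails near $o$. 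To obtain the reverse inequality $c_o^{\mathfrak{q}}(\Phi_{o,\max})\le\mathrm{lct}^{\mathfrak{q}}(\mathfrak{a}^{\nu}_{\bullet})$ you need an \emph{asymptotic} approximation of $\Phi_{o,\max}$ from above by the ideals $\mathfrak{a}^{\nu}_m$, which is what the paper supplies: Lemma \ref{thm:valu-jump} yields a constant $C$ with $\mathcal{I}(m\Phi_{o,\max})_o\subset\mathfrak{a}^{\nu}_{m+C}$, and Demailly's approximation theorem (Lemma \ref{l:appro-Berg}) gives $\Phi_{o,\max}\le\frac{1}{m}\log|\mathcal{I}(m\Phi_{o,\max})_o|+O(1)\le\frac{1}{m}\log|\mathfrak{a}^{\nu}_{m+C}|+O(1)$, whence $c_o^{\mathfrak{q}}(\Phi_{o,\max})\le\liminf_{m}c_o^{\mathfrak{q}}\bigl(\frac{1}{m}\log|\mathfrak{a}^{\nu}_{m+C}|\bigr)=\mathrm{lct}^{\mathfrak{q}}(\mathfrak{a}^{\nu}_{\bullet})$. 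Some such input (the shift by $C$ and the passage to the limit $m\to\infty$) is indispensable and is absent from your proposal.
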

\begin{proof}
	Note that $\nu(\mathfrak{a}^{\nu}_m)\ge m$, which deduces $\nu(\mathfrak{a}^{\nu}_{\bullet})\ge1$. Let $(f_1,o)\in\mathcal{O}_o$ such that $\nu(f_1)\in(0,+\infty)$. Then $\nu(f_1^k)=k\nu(f_1)$ for any positive integer $k$. As $\mathfrak{a}^{\nu}_m=\{(f,o)\in\mathcal{O}_o:v(f)\ge m\}$, we have $\nu(\mathfrak{a}_m)\le m+\nu(f_1)$ for any $m$. Thus, we get $\nu(\mathfrak{a}^{\nu}_{\bullet})=1.$
	
	Note that $\nu(f)=\sigma(\log|f|,\Phi_{o,\max}):=\sup\{c\ge0:\log|f|\le c\Phi_{o,\max}+O(1)$ near $o\}$.
	It follows from Lemma \ref{thm:valu-jump} that there exists a constant $C$ such that $\mathcal{I}(m\Phi_{o,\max})_o\subset\mathfrak{a}_{m+C}^{\nu}$ for any $m$, which shows 
	$$\Phi_{o,\max}\le \frac{1}{m}\log|\mathcal{I}(m\Phi_{o,\max})_o|+O(1)\le \frac{1}{m}\log|\mathfrak{a}_{m+C}^{\nu}|+O(1)$$
	near $o$ by Lemma \ref{l:appro-Berg}. Then we have 
	$$c_o^{\mathfrak{q}}(\Phi_{o,\max})\le \liminf_{m\rightarrow+\infty}c_o^{\mathfrak{q}}(\frac{1}{m}\log|\mathfrak{a}_{m+C}^{\nu}|)=\mathrm{lct}^{\mathfrak{q}}(\mathfrak{a}^{\nu}_{\bullet}).$$
	On the other hand, note that $\frac{1}{m}\log|\mathfrak{a}_m^{\nu}|\le \Phi_{o,\max}+O(1)$ near $o$, which implies  $\mathrm{lct}^{\mathfrak{q}}(\mathfrak{a}^{\nu}_{\bullet})\le c_o^{\mathfrak{q}}(\Phi_{o,\max}).$
	Thus, we obtain $$\mathrm{lct}^{\mathfrak{q}}(\mathfrak{a}^{\nu}_{\bullet})=c_o^{\mathfrak{q}}(\Phi_{o,\max}).$$
\end{proof}

Now, we prove Theorem \ref{thm:compute}.

\begin{proof}[Proof of Theorem \ref{thm:compute}]
	By definition of $\mathscr{A}(\nu)$, it suffices to prove
	$$\nu(\mathfrak{a}_{\bullet})\mathrm{lct}^{\mathfrak{q}}(\mathfrak{a}_{\bullet})-\nu(\mathfrak{q})\le \nu(\mathfrak{a}^{\nu}_{\bullet})\mathrm{lct}^{\mathfrak{q}_0}(\mathfrak{a}^{\nu}_{\bullet})-\nu(\mathfrak{q}_0)$$
	for any nonzero ideal $\mathfrak{q}$ and graded sequence $\mathfrak{a}_{\bullet}$ such that $\mathrm{lct}^{\mathfrak{q}}(\mathfrak{a})<+\infty$.
	$\mathrm{lct}^{\mathfrak{q}}(\mathfrak{a}_{\bullet})<+\infty$ shows that $\lim_{m\rightarrow+\infty}\mathrm{lct}^{\mathfrak{q}}(\mathfrak{a}_m)=0$. Note that
	\begin{equation}
		\label{eq:1018a}
		\begin{split}
			\nu(\mathfrak{a}_{\bullet})\mathrm{lct}^{\mathfrak{q}}(\mathfrak{a}_{\bullet})&=\lim_{m\rightarrow+\infty}\nu(\mathfrak{a}_{m})\mathrm{lct}^{\mathfrak{q}}(\mathfrak{a}_{m})\\
			&=\lim_{m\rightarrow+\infty}[\nu(\mathfrak{a}_{m})]\mathrm{lct}^{\mathfrak{q}}(\mathfrak{a}_{m})\\
			&\le \lim_{m\rightarrow+\infty}[\nu(\mathfrak{a}_{m})]\mathrm{lct}^{\mathfrak{q}}(\mathfrak{a}^{\nu}_{[\nu(\mathfrak{a}_{m})]}),
		\end{split}
	\end{equation}
	where $[x]:=\sup\{y\in\mathbb{Z}:y\le x\}$ for any $x\in\mathbb{R}$.
	There exists a positive number $N$ such that $\Phi_{o,\max}\ge N\log|z|+O(1)$ near $o$. As $\lim_{m\rightarrow+\infty}\mathrm{lct}^{\mathfrak{q}}(\mathfrak{a}_m)=0$, for any $N_1>0$ there exists $N_2>0$ such that
	\[\log|\mathfrak{a}_m|\le N_1\log|z|+O(1)\le \frac{N_1}{N}\Phi_{o,\max}+O(1)\]
	near $o$ for any $m>N_2$, which implies that $\lim_{m\rightarrow+\infty}\nu(\mathfrak{a}_m)=+\infty$.

	Using inequality \eqref{eq:1018a}, it suffices to consider the case $\mathfrak{a}_{\bullet}=\mathfrak{a}_{\bullet}^{\nu}$.
	Note that $c_o^{\mathfrak{q}_0}(\Phi_{o,\max})=1$. By Lemma \ref{l:1018a}, we only need to prove that the equality $$c_o^{\mathfrak{q}}(\Phi_{o,\max})-\nu(\mathfrak{q})\le 1-\nu(\mathfrak{q}_0)$$
	holds
	for any nonzero ideal $\mathfrak{q}$.
	Assume that $\mathfrak{q}$ is generated by $f_1,\ldots,f_l$. Then Lemma \ref{thm:valu-jump} shows that 
	$$c_o^{\mathfrak{q}}(\Phi_{o,\max})= \min_jc_o^{{f_j}}(\Phi_{o,\max})\le\min_j\{ \nu(f_j)+1-\nu(\mathfrak{q}_0)\}=\nu(\mathfrak{q})+1-\nu(\mathfrak{q}_0).$$
	Thus, Theorem \ref{thm:compute} holds.
\end{proof}

Following the notations in Theorem \ref{thm:compute},  we prove Remark \ref{r:A(nu)}.

\begin{proof}[Proof of Remark \ref{r:A(nu)}]
	Theorem \ref{thm:compute} shows that $$\mathscr{A}(\nu)=\nu(\mathfrak{a}^{\nu}_{\bullet})\mathrm{lct}^{\mathfrak{q}_0}(\mathfrak{a}^{\nu}_{\bullet})-\nu(\mathfrak{q}_0).$$ Lemma \ref{l:1018a} indicates $\nu(\mathfrak{a}^{\nu}_{\bullet})=1$ and $\mathrm{lct}^{\mathfrak{q}_0}(\mathfrak{a}^{\nu}_{\bullet})=c_o^{\mathfrak{q}_0}(\Phi_{o,\max})$, where $\Phi_{o,\max}$ is the corresponding local Zhou weight related to $|\mathfrak{q}_0|^2$ of $\nu$. Thus,
	\[c_o^{\mathfrak{q}_0}(\Phi_{o,\max})=1 \ \&  \ \mathscr{A}(\nu)=1-\nu(\mathfrak{q}_0).\]

	For any $(h,o)\in\mathcal{O}^*_o$, we have 
	$$c_o^{h}(\Phi_{o,\max})-\nu(h)=\nu(\mathfrak{a}^{\nu}_{\bullet})\mathrm{lct}^{(h)_o}(\mathfrak{a}^{\nu}_{\bullet})-\nu((h)_o)\le\mathscr{A}(\nu),$$
	where $(h)_o$ is the ideal of $\mathcal{O}_o$ generated by $(h,o)$.
	There exists $(h_1,o)\in\mathfrak{q}_0$ such that $\nu(h_1)=\nu(\mathfrak{q}_0)$. It follows from Lemma \ref{thm:valu-jump} that 
	$$c_o^{h_1}(\Phi_{o,\max})\le\nu(h_1)-\nu(\mathfrak{q}_0)+1=1.$$
	As $c_o^{\mathfrak{q}_0}(\Phi_{o,\max})=1$, we have $c_o^{h_1}(\Phi_{o,\max})=1$. Thus, $$\mathscr{A}(\nu)=\sup_{(h,o)\in\mathcal{O}^*_o}(c_o^{h}(\Phi_{o,\max})-\nu(h)).$$
	
	Note that $\sigma(\log|\mathfrak{q}_0|,\Phi_{o,\max})=\nu(\mathfrak{q}_0)$ and $c_o^{\mathfrak{q}_0}(\Phi_{o,\max})=1$. Taking $\psi=\log|\mathfrak{q}_0|$, Lemma \ref{l:tianfunction} implies that
	$$\mathrm{Tn}(t)=1+\nu(\mathfrak{q}_0)(t-1)=\nu(\mathfrak{q}_0)t+\mathscr{A}(\nu).$$ 
\end{proof}

\section{Graded sequences, subadditive sequences and Zhou valuations}

In this section, we show the effects of Zhou valuations acting on graded sequences and subadditive sequences, which will be used in the following proofs in the next section.

Let $\mathfrak{a}_{\bullet}$ be a graded sequence of ideals, and let $\mathfrak{b}_{\bullet}$ be the corresponding subadditive sequence of asymptotic multiplier ideals of $\mathfrak{a}_{\bullet}$.
The following lemma gives a relation between $\nu(\mathfrak{a}_{m})$ and $\nu(\mathfrak{b}_m)$ for any Zhou valuation $\nu$.

\begin{Lemma}\label{l:a.b.}
	 For any $m$ such that $\mathfrak{a}_m$ is nonzero, we have 
	$$\nu(\mathfrak{a}_{\bullet})-\frac{\mathscr{A}(\nu)}{m}<\frac{\nu(\mathfrak{b}_m)}{m}\le\frac{\nu(\mathfrak{a}_m)}{m}$$
	for any Zhou valuation $\nu$. In particular, $\nu(\mathfrak{a}_{\bullet})=\nu(\mathfrak{b}_{\bullet})$.
\end{Lemma}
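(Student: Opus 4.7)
The plan is to use the Zhou weight $\Phi_{o,\max}$ attached to $\nu$ to translate both inequalities into a comparison between $\log|\mathfrak{a}_k|$ and $\Phi_{o,\max}$. The upper bound $\nu(\mathfrak{b}_m)/m\le\nu(\mathfrak{a}_m)/m$ is immediate from the inclusion $\mathfrak{a}_m\subset\mathfrak{b}_m$ already recorded in the Remark following Conjecture~\ref{conj:1}, together with the monotonicity $\nu(\mathfrak{a})\le\nu(\mathfrak{b})$ whenever $\mathfrak{a}\supset\mathfrak{b}$.

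For the lower bound I first fix $k$ (depending on $m$) divisible enough that $\mathfrak{b}_m=\mathcal{I}(\tfrac{m}{k}\log|\mathfrak{a}_k|)_o$. Picking generators of $\mathfrak{a}_k$ on which $\nu(\mathfrak{a}_k)$ is attained and using $\log|g|\le\nu(g)\,\Phi_{o,\max}+O(1)$ (from $\sigma(\log|g|,\Phi_{o,\max})=\nu(g)$) together with $\Phi_{o,\max}\le 0$, one gets
\[\log|\mathfrak{a}_k|\le\nu(\mathfrak{a}_k)\,\Phi_{o,\max}+O(1)\]
near $o$, and hence the inclusion $\mathfrak{b}_m\subset\mathcal{I}\bigl(\tfrac{m\nu(\mathfrak{a}_k)}{k}\Phi_{o,\max}\bigr)_o$. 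Invoking strong openness (Proposition~\ref{p:effect_GZ} with Remark~\ref{rem:effect_GZ}), every $f\in\mathfrak{b}_m$ actually lies in $\mathcal{I}\bigl(p\cdot\tfrac{m\nu(\mathfrak{a}_k)}{k}\Phi_{o,\max}\bigr)_o$ for some $p>1$, which gives the strict inequality $c_o^{f}(\Phi_{o,\max})>\tfrac{m\nu(\mathfrak{a}_k)}{k}$. Combining this with $c_o^{f}(\Phi_{o,\max})\le\nu(f)+\mathscr{A}(\nu)$ from Lemma~\ref{thm:valu-jump} and Remark~\ref{r:A(nu)}, and with $\nu(\mathfrak{a}_k)/k\ge\nu(\mathfrak{a}_\bullet)$, yields $\nu(f)>m\nu(\mathfrak{a}_\bullet)-\mathscr{A}(\nu)$ for every $f\in\mathfrak{b}_m$; since $\mathfrak{b}_m$ is finitely generated, $\nu(\mathfrak{b}_m)$ is attained on one of its generators, so the same strict inequality persists for $\nu(\mathfrak{b}_m)$.

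For the ``in particular'' assertion, Remark~\ref{r:A(nu)} gives $\mathscr{A}(\nu)=1-\nu(\mathfrak{q}_0)<+\infty$, so $\mathscr{A}(\nu)/m\to 0$. Sandwiching $\nu(\mathfrak{b}_m)/m$ between $\nu(\mathfrak{a}_\bullet)-\mathscr{A}(\nu)/m$ and $\nu(\mathfrak{a}_m)/m$ and using $\nu(\mathfrak{a}_m)/m\to\nu(\mathfrak{a}_\bullet)$ together with $\nu(\mathfrak{b}_\bullet)=\lim_m\nu(\mathfrak{b}_m)/m$ forces $\nu(\mathfrak{b}_\bullet)=\nu(\mathfrak{a}_\bullet)$.

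The main obstacle I expect is upgrading the weight comparison to the \emph{strict} inequality in the statement: a naive use of the inclusion alone would only produce $\nu(f)\ge\tfrac{m\nu(\mathfrak{a}_k)}{k}-\mathscr{A}(\nu)$, which could in principle equal $m\nu(\mathfrak{a}_\bullet)-\mathscr{A}(\nu)$ because $\nu(\mathfrak{a}_k)/k=\nu(\mathfrak{a}_\bullet)$ is not excluded. Applying strong openness at the multiplier-ideal step to gain a factor $p>1$ is what provides the needed ``$>$''.
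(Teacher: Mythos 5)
Your proposal is correct and follows essentially the same route as the paper: both identify $\mathfrak{b}_m=\mathcal{I}(\tfrac{m}{k}\log|\mathfrak{a}_k|)_o$, compare $\log|\mathfrak{a}_k|$ with $\nu(\mathfrak{a}_k)\Phi_{o,\max}$, invoke strong openness to obtain the strict inequality, and then use Lemma \ref{thm:valu-jump} together with $\mathscr{A}(\nu)=1-\nu(\mathfrak{q}_0)$ to pass from the jumping number back to $\nu$. The only difference is cosmetic (you phrase the weight comparison as an inclusion of multiplier ideals and argue generator by generator, while the paper compares the two jumping numbers $c_o^{\mathfrak{b}_m}(\cdot)$ directly), and your closing remark correctly identifies strong openness as the source of the strict inequality.
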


\begin{proof}
	Let $\nu$ be a Zhou valuation related to $|\mathfrak{q}|^2$, whose corresponding local Zhou weight is denoted by $\Phi_{o,\max}$.
	It follows from $\mathfrak{a}_{m}\subset\mathfrak{b}_m$ that $$\nu(\mathfrak{a}_m)\ge\nu(\mathfrak{b}_m)$$
	for any $m$. For any $m$ such that $\mathfrak{a}_m$ is nonzero, there exists $m_1$ large enough such that $\mathfrak{b}_m=\mathcal{I}(\frac{m}{m_1}\log|\mathfrak{a}_{m_1}|)_o$, then it follows from Lemma \ref{thm:valu-jump} and $\mathscr{A}(\nu)=1-\nu(\mathfrak{q})$ that
	\begin{equation}
		\nonumber
		\begin{split}
			\frac{\nu(\mathfrak{b}_m)}{m}&=\frac{\nu(\mathcal{I}(\frac{m}{m_1}\log|\mathfrak{a}_{m_1}|)_o)}{m}\\
			&\ge \frac{c_o^{\mathcal{I}(\frac{m}{m_1}\log|\mathfrak{a}_{m_1}|)_o}(\Phi_{o,\max})-\mathscr{A}(\nu)}{m}\\
			&\ge\frac{c_o^{\mathcal{I}(\frac{m}{m_1}\log|\mathfrak{a}_{m_1}|)_o}(\frac{\log|\mathfrak{a}_{m_1}|}{\nu(\mathfrak{a}_{m_1})})-\mathscr{A}(\nu)}{m}\\
			&> \frac{\frac{m\nu(\mathfrak{a}_{m_1})}{m_1}-\mathscr{A}(\nu)}{m}\\
			&\ge\nu(\mathfrak{a}_{\bullet})-\frac{\mathscr{A}(\nu)}{m},
		\end{split}
	\end{equation}
where the above ``$>$" follows from the strong openness property of the multiplier ideal sheaves(see \cite{GZopen-c}, see also Proposition \ref{p:effect_GZ} and Remark \ref{rem:effect_GZ}).
	Thus, Lemma \ref{l:a.b.} holds.
\end{proof}

Recall that 
\begin{equation}
	\label{eq:1027a}
	\mathscr{A}(\nu):=\sup\{\nu(\mathfrak{a}_{\bullet})\mathrm{lct}^{\mathfrak{q}}(\mathfrak{a}_{\bullet})-\nu(\mathfrak{q}):\text{for any $\mathfrak{q}$ and $\mathfrak{a}_{\bullet}$ such that $\mathrm{lct}^{\mathfrak{q}}(\mathfrak{a}_{\bullet})<+\infty$}\},
\end{equation} 
where $\mathfrak{a}_{\bullet}$ is any graded sequence of ideals. The following remark shows that equality \eqref{eq:1027a} also holds when replacing $\mathfrak{a}_{\bullet}$ by   subadditive sequences of ideals $\mathfrak{b}_{\bullet}$.

\begin{Remark}
	\label{r:sup-b}
	Let $\nu$ be a Zhou valuation related to $|\mathfrak{q}_0|^2$, where $\mathfrak{q}_0$ is a nonzero ideal. Then 
	$$\mathscr{A}(\nu)=\sup\big\{\nu(\mathfrak{b}_{\bullet})\mathrm{lct}^{\mathfrak{q}}(\mathfrak{b}_{\bullet})-\nu(\mathfrak{q}):\text{for any $\mathfrak{q}$ and $\mathfrak{b}_{\bullet}$ such that $\mathrm{lct}^{\mathfrak{q}}(\mathfrak{b}_{\bullet})<+\infty$}\big\}.$$
\end{Remark}
\begin{proof}
	For any nonzero ideal $\mathfrak{q}$ and any subadditive sequence $\mathfrak{b}_{\bullet}$ satisfying $\mathrm{lct}^{\mathfrak{q}}(\mathfrak{b}_{\bullet})<+\infty$, if $\nu(\mathfrak{b}_{\bullet})>0$, it is clear that 
	\begin{equation}
		\nonumber
		\begin{split}
			\nu(\mathfrak{b}_{\bullet})\mathrm{lct}^{\mathfrak{q}}(\mathfrak{b}_{\bullet})=&\lim_{t\rightarrow+\infty}\nu(\mathfrak{b}_t)\mathrm{lct}^{\mathfrak{q}}(\mathfrak{b}_t)\\
			\le&\lim_{t\rightarrow+\infty}[\nu(\mathfrak{b}_t)]\mathrm{lct}^{\mathfrak{q}}(\mathfrak{a}^{\nu}_{[\nu(\mathfrak{b}_t)]})\\
			=&\nu(\mathfrak{a}^{\nu})\mathrm{lct}^{\mathfrak{q}}(\mathfrak{a}^{\nu}_{\bullet}),
		\end{split}
	\end{equation}
	where $[s]:=\sup\{x\in\mathbb{Z}:x\le s\}$ for any $s\in\mathbb{R}$. Thus, by the definition of $\mathscr{A}(\nu)$ we have $$\nu(\mathfrak{b}_{\bullet})\mathrm{lct}^{\mathfrak{q}}(\mathfrak{b}_{\bullet})-\nu(\mathfrak{q})\le \mathscr{A}(\nu)$$ 
	for any $\mathfrak{q}$ and $\mathfrak{b}_{\bullet}$ such that $\mathrm{lct}^{\mathfrak{q}}(\mathfrak{b}_{\bullet})<+\infty$. Note that for any graded sequence of ideals $\mathfrak{a}_{\bullet}$, the corresponding subadditive sequence $\mathfrak{b}_{\bullet}$ of asymptotic multiplier ideals of $\mathfrak{a}_{\bullet}$ satisfies that $\nu(\mathfrak{a}_{\bullet})=\nu(\mathfrak{b}_{\bullet})$ and $\mathrm{lct}^{\mathfrak{q}}(\mathfrak{a}_{\bullet})=\mathrm{lct}^{\mathfrak{q}}(\mathfrak{b}_{\bullet})$. Thus, using the definition of $\mathscr{A}(\nu)$, we get
	$$\mathscr{A}(\nu)=\sup\{\nu(\mathfrak{b}_{\bullet})\mathrm{lct}^{\mathfrak{q}}(\mathfrak{b}_{\bullet})-\nu(\mathfrak{q}):\text{for any $\mathfrak{q}$ and $\mathfrak{b}_{\bullet}$ such that $\mathrm{lct}^{\mathfrak{q}}(\mathfrak{b}_{\bullet})<+\infty$}\}.$$
\end{proof}

Let $\varphi$ be a plurisubharmonic function near $o$. Denote that $$\mathfrak{b}_t^{\varphi}:=\mathcal{I}(t\varphi)_o$$
for any $t>0$.
Lemma \ref{l:subaddtive} (the subadditive theorem, see \cite{DEL00}) shows that 
 $\mathfrak{b}^{\varphi}_{\bullet}=(\mathfrak{b}^{\varphi}_t)_{t>0}$ is a subadditive system of ideals satisfying that $\mathfrak{b}^{\varphi}_{s+t}\subset\mathfrak{b}^{\varphi}_s\cdot\mathfrak{b}^{\varphi}_t$ for any $s,t\in\mathbb{R}_{>0}$. Then we have

\begin{Lemma}
	\label{l:contro}
	Let $\nu$ be a  Zhou valuation, whose corresponding local Zhou weight is denoted by $\Phi_{o,\max}$. Then
	$$\nu(\mathfrak{b}_{\bullet}^{\varphi})=\sigma(\varphi,\Phi_{o,\max})$$ and 
	$$\frac{\nu(\mathfrak{b}^{\varphi}_t)}{t}>\nu(\mathfrak{b}^{\varphi}_{\bullet})-\frac{\mathscr{A}(\nu)}{t}.$$
\end{Lemma}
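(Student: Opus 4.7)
The approach is to establish both claims of the lemma by sandwiching $\nu(\mathfrak{b}^{\varphi}_t)$ between two estimates comparing it to $t\,\sigma(\varphi,\Phi_{o,\max})$, mirroring the method of Lemma \ref{l:a.b.} on the lower side and invoking Demailly's Bergman approximation on the upper side. The lower estimate, which will be strict, simultaneously yields the second inequality of the lemma and the bound $\nu(\mathfrak{b}^{\varphi}_{\bullet})\ge\sigma(\varphi,\Phi_{o,\max})$.

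For the strict lower bound I would fix $c<\sigma(\varphi,\Phi_{o,\max})$, so that $\varphi\le c\Phi_{o,\max}+O(1)$ near $o$ by the definition of the relative type. The strong openness property of multiplier ideal sheaves gives $\mathcal{I}(t\varphi)_o=\mathcal{I}((t+\eta)\varphi)_o$ for some $\eta=\eta(t)>0$; combining with the preceding inequality yields $\mathfrak{b}^{\varphi}_t\subset\mathcal{I}((t+\eta)c\Phi_{o,\max})_o$, so every germ $G\in\mathfrak{b}^{\varphi}_t$ satisfies $c_o^G(\Phi_{o,\max})\ge(t+\eta)c$. Lemma \ref{thm:valu-jump} combined with the identity $\mathscr{A}(\nu)=1-\sigma(\log|f_0|,\Phi_{o,\max})$ from Remark \ref{r:A(nu)} then gives $\nu(G)\ge c_o^G(\Phi_{o,\max})-\mathscr{A}(\nu)\ge(t+\eta)c-\mathscr{A}(\nu)$. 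Taking the infimum over $G$ and then letting $c\to\sigma(\varphi,\Phi_{o,\max})^-$ produces $\nu(\mathfrak{b}^{\varphi}_t)>t\,\sigma(\varphi,\Phi_{o,\max})-\mathscr{A}(\nu)$, which divided by $t$ is the desired second inequality of the lemma; passing to the supremum in $t$ further yields $\nu(\mathfrak{b}^{\varphi}_{\bullet})\ge\sigma(\varphi,\Phi_{o,\max})$.

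For the matching upper bound $\nu(\mathfrak{b}^{\varphi}_{\bullet})\le\sigma(\varphi,\Phi_{o,\max})$ I plan to use Demailly's Bergman approximation (Lemma \ref{l:appro-Berg}). Writing $\nu(\mathfrak{b}^{\varphi}_t)=\sigma(\log|\mathfrak{b}^{\varphi}_t|,\Phi_{o,\max})$ and denoting by $K_t=\sum_k|\sigma_{t,k}|^2$ the Bergman kernel of $A^2(D,2t\varphi)$, the Demailly inequality $\tfrac{1}{2t}\log K_t\ge\varphi-c_1/t$ together with a standard comparison $|\mathfrak{b}^{\varphi}_t|^2\asymp K_t$ on a neighborhood of $o$ yields $\log|\mathfrak{b}^{\varphi}_t|\ge t\varphi+O(1)$ there. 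Monotonicity of $\sigma$ in its first argument (which is decreasing, since larger functions admit smaller admissible constants $c$) then gives $\nu(\mathfrak{b}^{\varphi}_t)\le\sigma(t\varphi,\Phi_{o,\max})=t\,\sigma(\varphi,\Phi_{o,\max})$; dividing by $t$ and passing to the supremum completes the proof of the identity $\nu(\mathfrak{b}^{\varphi}_{\bullet})=\sigma(\varphi,\Phi_{o,\max})$, and substituting this back into the strict lower bound finishes the second claim. The main obstacle will be the Bergman-kernel comparison $|\mathfrak{b}^{\varphi}_t|^2\asymp K_t$ on a neighborhood of $o$: this follows from the finite generation of the multiplier ideal stalk $\mathcal{I}(t\varphi)_o$ together with Skoda-type division estimates (a consequence of Ohsawa--Takegoshi $L^2$-extension) that bound the coefficients of arbitrary Bergman sections in terms of the finitely many chosen generators.
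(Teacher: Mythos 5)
Your proposal is correct and follows essentially the same route as the paper: the upper bound $\nu(\mathfrak{b}^{\varphi}_{\bullet})\le\sigma(\varphi,\Phi_{o,\max})$ via Demailly's approximation $t\varphi\le\log|\mathcal{I}(t\varphi)_o|+O(1)$, and the strict lower bound via strong openness together with Lemma \ref{thm:valu-jump} and $\mathscr{A}(\nu)=1-\sigma(\log|f_0|,\Phi_{o,\max})$. The only cosmetic difference is that you invoke strong openness in the form $\mathcal{I}(t\varphi)_o=\mathcal{I}((t+\eta)\varphi)_o$ and approximate $\sigma(\varphi,\Phi_{o,\max})$ from below by constants $c$, whereas the paper uses $c_o^{\mathfrak{b}^{\varphi}_t}(\varphi)>t$ directly; both yield the same strict inequality.
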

\begin{proof}
	It follows from Lemma \ref{l:appro-Berg} that $\varphi\le \frac{1}{t}\log|\mathcal{I}(t\varphi)_o|+O(1)$ near $o$ for any $t>0$, which deduces that 
	\begin{equation}
		\nonumber\begin{split}
			\sigma(\varphi,\Phi_{o,\max})&\ge\limsup_{t\rightarrow+\infty}\sigma(\frac{1}{t}\log|\mathcal{I}(t\varphi)_o|,\Phi_{o,\max})\\
			&=\limsup_{t\rightarrow+\infty}\frac{\nu(\mathfrak{b}^{\varphi}_t)}{t}
			\\&=\nu(\mathfrak{b}_{\bullet}^{\varphi}).
		\end{split}
	\end{equation}
	Note that $\varphi\le \sigma(\varphi,\Phi_{o,\max})\Phi_{o,\max}+O(1)$ near $o$, then it follows from Lemma \ref{thm:valu-jump} that 
	\begin{equation}
		\nonumber\begin{split}
			\nu(\mathfrak{b}_{\bullet}^{\varphi})&=\lim_{t\rightarrow+\infty}\frac{\nu(\mathfrak{b}^{\varphi}_t)}{t}\\
			&=\lim_{t\rightarrow+\infty}\frac{c_o^{\mathcal{I}(t\varphi)_o}(\Phi_{o,\max})}{t}\\
			&\ge\lim_{t\rightarrow+\infty}\frac{c_o^{\mathcal{I}(t\varphi)_o}(\frac{\varphi}{\sigma(\varphi,\Phi_{o,\max})})}{t}\\
			&\ge\sigma(\varphi,\Phi_{o,\max}).
		\end{split}
	\end{equation}
	Thus, we have $\nu(\mathfrak{b}_{\bullet}^{\varphi})=\sigma(\varphi,\Phi_{o,\max}).$ Following from Proposition \ref{p:effect_GZ} and Remark \ref{rem:effect_GZ}, $$c_o^{\mathfrak{b}^{\varphi}_t}(\varphi)>t$$ for any $t>0$. Using Lemma \ref{thm:valu-jump}, we have 
	\begin{equation}
		\nonumber
		\begin{split}
			\frac{\nu(\mathfrak{b}^{\varphi}_t)}{t}&\ge \frac{c_o^{\mathfrak{b}^{\varphi}_t}(\Phi_{o,\max})-\mathscr{A}(\nu)}{t}\\
			&\ge\frac{c_o^{\mathfrak{b}^{\varphi}_t}(\frac{\varphi}{\sigma(\varphi,\Phi_{o,\max})})-\mathscr{A}(\nu)}{t}\\
			&>\sigma(\varphi,\Phi_{o,\max})-\frac{\mathscr{A}(\nu)}{t}\\
			&=\nu(\mathfrak{b}^{\varphi}_{\bullet})-\frac{\mathscr{A}(\nu)}{t}.
		\end{split}
	\end{equation}
	Thus, Lemma \ref{l:contro} holds.
\end{proof}

Remark \ref{r:sup-b} shows that 		$$\mathrm{lct}^{\mathfrak{q}}(\mathfrak{b}_{\bullet})\le  \inf_{\nu}\frac{\mathscr{A}(\nu)+\nu(\mathfrak{q})}{\nu(\mathfrak{b}_{\bullet})},$$ where the infimum is over all Zhou valuations $\nu$. The following Proposition tells us that it is actually an equality.

\begin{Proposition}\label{p:inf-b}
	Let $\mathfrak{b}_{\bullet}$ be a subadditive system of ideals,  and let $\mathfrak{q}$ be a nonzero ideal. Then
	\begin{equation}
		\label{eq:1023a}
		\mathrm{lct}^{\mathfrak{q}}(\mathfrak{b}_{\bullet})= \inf_{\nu}\frac{\mathscr{A}(\nu)+\nu(\mathfrak{q})}{\nu(\mathfrak{b}_{\bullet})},
	\end{equation}
	where the infimum is over all Zhou valuations $\nu$. 
\end{Proposition}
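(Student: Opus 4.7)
The inequality $\mathrm{lct}^{\mathfrak{q}}(\mathfrak{b}_{\bullet})\le\inf_{\nu}\frac{\mathscr{A}(\nu)+\nu(\mathfrak{q})}{\nu(\mathfrak{b}_{\bullet})}$ is already provided by Remark \ref{r:sup-b}, so the plan is to establish the reverse inequality by exhibiting a Zhou valuation $\nu$ that $\mathscr{A}$-computes $\mathrm{lct}^{\mathfrak{q}}(\mathfrak{b}_{\bullet})$. The strategy is to attach to the subadditive system $\mathfrak{b}_{\bullet}$ a plurisubharmonic function $\varphi$ whose invariants $c_o^{\mathfrak{q}}(\varphi)$ and $\sigma(\varphi,\Phi_{o,\max})$ match $\mathrm{lct}^{\mathfrak{q}}(\mathfrak{b}_{\bullet})$ and $\nu(\mathfrak{b}_{\bullet})$, and then to apply Theorem \ref{thm:compute2} to $\varphi$.

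On a small bounded hyperconvex domain $D\ni o$, let $\psi_m:=\frac{1}{m}\log|\mathfrak{b}_m|$; each $\psi_m$ is plurisubharmonic, and the subadditivity $\mathfrak{b}_{p+q}\subset\mathfrak{b}_p\cdot\mathfrak{b}_q$ together with $|\mathfrak{b}_p\cdot\mathfrak{b}_q|=|\mathfrak{b}_p|\cdot|\mathfrak{b}_q|$ gives $\psi_{mk}\le\psi_m+O(1/mk)$, so a Fekete-type limit of $(\psi_m)$ exists along an $m$-divisible subsequence. I take $\varphi$ to be the upper semicontinuous regularization of this limit, plurisubharmonic by Proposition \ref{pro:Demailly}. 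From $\log|\mathfrak{b}_m|\le m\varphi+O(1)$ I get $\mathfrak{b}_m\subset\mathcal{I}(m\varphi)_o=\mathfrak{b}^{\varphi}_m$, hence $\nu(\mathfrak{b}_m)\ge\nu(\mathfrak{b}^{\varphi}_m)$ for every valuation. For a Zhou valuation $\nu$ with weight $\Phi_{o,\max}$, the identity $\nu(\mathfrak{b}_m)/m=\sigma(\psi_m,\Phi_{o,\max})$, passed to the limit in $m$, yields $\nu(\mathfrak{b}_{\bullet})=\sigma(\varphi,\Phi_{o,\max})=\nu(\mathfrak{b}^{\varphi}_{\bullet})$ by Lemma \ref{l:contro}; analogously $\mathrm{lct}^{\mathfrak{q}}(\mathfrak{b}_{\bullet})=\lim_m c_o^{\mathfrak{q}}(\psi_m)=c_o^{\mathfrak{q}}(\varphi)=\mathrm{lct}^{\mathfrak{q}}(\mathfrak{b}^{\varphi}_{\bullet})$, where the continuity of $c_o^{\mathfrak{q}}(\cdot)$ along this essentially decreasing sequence is handled by Demailly's approximation (Lemma \ref{l:appro-Berg}) together with Lemma \ref{l:lct-N}.

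Applying Theorem \ref{thm:compute2} to $\varphi$ then produces a Zhou valuation $\nu$ related to $|\mathfrak{q}|^2$ with $\mathrm{lct}^{\mathfrak{q}}(\mathfrak{b}^{\varphi}_{\bullet})=\frac{\mathscr{A}(\nu)+\nu(\mathfrak{q})}{\nu(\mathfrak{b}^{\varphi}_{\bullet})}$; substituting the identifications of the previous paragraph rewrites this as $\mathrm{lct}^{\mathfrak{q}}(\mathfrak{b}_{\bullet})=\frac{\mathscr{A}(\nu)+\nu(\mathfrak{q})}{\nu(\mathfrak{b}_{\bullet})}$, which gives the reverse inequality (with equality actually attained). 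The main technical obstacle is the growth hypothesis $\varphi\ge N\log|z|+O(1)$ near $o$ required by Theorem \ref{thm:compute2}: when the natural $\varphi$ fails it, I work instead with the truncations $\varphi_N:=\max\{\varphi,N\log|z|\}$, apply Theorem \ref{thm:compute2} to each $\varphi_N$ to obtain Zhou valuations $\nu_N$, and let $N\to+\infty$ using Lemma \ref{l:lct-N} (so that $c_o^{\mathfrak{q}}(\varphi_N)\to c_o^{\mathfrak{q}}(\varphi)$), thereby approximating the infimum up to an arbitrary $\varepsilon>0$. The degenerate case $\mathrm{lct}^{\mathfrak{q}}(\mathfrak{b}_{\bullet})=+\infty$ is trivial since the right-hand side is then also $+\infty$.
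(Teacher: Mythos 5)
Your overall architecture (use Remark \ref{r:sup-b} for ``$\le$'', then produce Zhou valuations realizing the other direction from a psh weight via Lemma \ref{r:exists}/Theorem \ref{thm:compute2}) is in the right spirit, but the reduction of a general subadditive system $\mathfrak{b}_{\bullet}$ to a single psh function $\varphi$ has genuine gaps. First, the comparison between the Fekete limit and the $\psi_m=\frac1m\log|\mathfrak{b}_m|$ goes the wrong way: subadditivity gives $\psi_{mk}\le\psi_m+O(1/m)$, so the limit $\varphi$ is \emph{more} singular than each $\psi_m$, i.e. $m\varphi\le\log|\mathfrak{b}_m|+O(1)$. This yields $\mathcal{I}(m\varphi)_o\subset\mathcal{I}(\log|\mathfrak{b}_m|)_o$, not the containment $\mathfrak{b}_m\subset\mathcal{I}(m\varphi)_o=\mathfrak{b}^{\varphi}_m$ you assert. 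Second, even granting a correct comparison, the identities $c_o^{\mathfrak{q}}(\varphi)=\lim_m c_o^{\mathfrak{q}}(\psi_m)$ and $\sigma(\varphi,\Phi_{o,\max})=\lim_m\sigma(\psi_m,\Phi_{o,\max})$ are continuity statements for jumping numbers and relative types along a \emph{decreasing} sequence of psh functions. These fail in general (e.g. $\max\{\log|z|,-m\}\downarrow\log|z|$ has infinite jumping number for every $m$ but finite limit), and for subadditive systems they are exactly what the ``controlled growth'' hypothesis of Conjecture \ref{conj:2} is designed to guarantee — a hypothesis absent from Proposition \ref{p:inf-b}. Neither Lemma \ref{l:appro-Berg} (which compares $\varphi$ with its own Bergman approximants) nor Lemma \ref{l:lct-N} (truncation of a fixed weight by $N\log|z|$) addresses this. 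Third, $\varphi$ may be identically $-\infty$ (take $\mathfrak{b}_t$ decaying superlinearly, e.g. powers of $\mathfrak{m}$ growing like $t^2$), in which case Theorem \ref{thm:compute2} cannot be invoked although the proposition still has content; relatedly, the case $\mathrm{lct}^{\mathfrak{q}}(\mathfrak{b}_{\bullet})=+\infty$ is not ``trivial'' — one must show $\nu(\mathfrak{b}_{\bullet})=0$ for every Zhou valuation, which the paper does via the Lelong number (itself a Zhou valuation) and Skoda's Lemma \ref{l:skoda72}.

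The paper sidesteps all of these limit issues by never passing to $t\to\infty$ in the weight: for any $x>\mathrm{lct}^{\mathfrak{q}}(\mathfrak{b}_{\bullet})$ it fixes a single finite level $t$ and $N$ with $t\,c_o^{\mathfrak{q}}(\varphi_{t,N})<x$ for $\varphi_{t,N}=\max\{\log|\mathfrak{b}_t|,N\log|z|\}$, uses strong openness and Lemma \ref{r:exists} to build a Zhou weight $\Phi_{o,\max}\ge c_o^{\mathfrak{q}}(\varphi_{t,N})\varphi_{t,N}$ with $\mathscr{A}(\nu_{t,N})+\nu_{t,N}(\mathfrak{q})=1$, and then exploits that for subadditive systems $\nu(\mathfrak{b}_{\bullet})=\sup_t\nu(\mathfrak{b}_t)/t\ge\nu(\mathfrak{b}_t)/t$. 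This shows the infimum is \emph{approached}, which is all the proposition claims; your attempt to have it \emph{attained} by a single Zhou valuation is strictly stronger and is essentially Theorem \ref{thm:compute2}, which the paper only proves for systems of the special form $\mathfrak{b}^{\varphi}_{\bullet}$. To repair your argument you would need either to impose controlled growth on $\mathfrak{b}_{\bullet}$ or to abandon the limit weight and argue level-by-level as the paper does.
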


\begin{proof}
	Firstly, we prove that $\mathrm{lct}^{\mathfrak{q}}(\mathfrak{b}_{\bullet})=+\infty$ holds if and only if $\nu(\mathfrak{b}_{\bullet})=0$ for any Zhou valuation $\nu$. If $\nu(\mathfrak{b}_{\bullet})>0$ for some Zhou valuation $\nu$ (its corresponding local Zhou weight is denoted by $\Phi_{o,\max}$), there exists $c>0$ such that $\nu(\mathfrak{b}_t)>tc$ for large enough $t$, i.e., $\log|\mathfrak{b}_t|<tc\Phi_{o,\max}+O(1)$ near $o$, which implies that 
	\begin{equation}
		\nonumber\begin{split}
			\mathrm{lct}^{\mathfrak{q}}(\mathfrak{b}_{\bullet})&=\lim_{t\rightarrow+\infty}t\mathrm{lct}^{\mathfrak{q}}(\mathfrak{b}_{t})\\
			&\le\lim_{t\rightarrow+\infty}tc_o^{\mathfrak{q}}(tc\Phi_{o,\max})\\
			&=\frac{c_o^{\mathfrak{q}}(\Phi_{o,\max})}{c}\\
			&<+\infty.
		\end{split}
	\end{equation}
	Thus, $\mathrm{lct}^{\mathfrak{q}}(\mathfrak{b}_{\bullet})=+\infty$ implies $\nu(\mathfrak{b}_{\bullet})=0$ for any Zhou valuation $\nu$. Note that the Lelong number $\nu_L:=\nu(\log|\cdot|,o)$ is also a Zhou valuation.  
	If $\nu_L(\mathfrak{b}_{\bullet})=0$, i.e., $$\lim_{t\rightarrow+\infty}\nu(\frac{1}{t}\log|\mathfrak{b}_t|,o)=0,$$
	it follows from Lemma \ref{l:skoda72} that 
	$$\mathrm{lct}^{\mathfrak{q}}(\mathfrak{b}_{\bullet})=\lim_{t\rightarrow+\infty}c_o^{\mathfrak{q}}(\frac{1}{t}\log|\mathfrak{b}_t|)=+\infty.$$ 
	
	In the following, we prove equality \eqref{eq:1023a} with assumption $\mathrm{lct}^{\mathfrak{q}}(\mathfrak{b}_{\bullet})<+\infty$. Remark \ref{r:sup-b} shows that
	\begin{equation}
		\label{eq:1023d}\mathrm{lct}^{\mathfrak{q}}(\mathfrak{b}_{\bullet})\le \frac{\mathscr{A}(\nu)+\nu(\mathfrak{q})}{\nu(\mathfrak{b}_{\bullet})}
	\end{equation} 
	for any Zhou valuation $\nu$. 
	
	Let $x>\mathrm{lct}^{\mathfrak{q}}(\mathfrak{b}_{\bullet})$ be a constant. Following from the definition of $\mathrm{lct}^{\mathfrak{q}}(\mathfrak{b}_{\bullet})$ and Lemma \ref{l:lct-N}, there exist two constants $t$ and $N$ large enough satisfying
	\begin{equation}
		\nonumber
		tc_o^{\mathfrak{q}}(\max\{\log|\mathfrak{b}_t|,N\log|z|\})<x.
	\end{equation} 
	For simplicity, denote $\max\{\log|\mathfrak{b}_t|,N\log|z|\}$ by $\varphi_{t,N}$.
	The strong openness property of multiplier ideal sheaves (see Proposition \ref{p:effect_GZ} and Remark \ref{rem:effect_GZ}) shows that $$|\mathfrak{q}|^2e^{-2c_o^{\mathfrak{q}}(\varphi_{t,N})\varphi_{t,N}}$$ is not integrable near $o.$ By remark \ref{r:exists}, there exists a local Zhou weight $\Phi_{o,\max}$ (whose corresponding Zhou valuation is denoted by $\nu_{t,N}$) related to $|\mathfrak{q}|^2$ satisfying that $$\Phi_{o,\max}\ge c_o^{\mathfrak{q}}(\varphi_{t,N})\varphi_{t,N}$$ 
	near $o$ and $|\mathfrak{q}|^2e^{-2\Phi_{o,\max}}$ is not integrable near $o.$ It is clear that $\sigma(\varphi_{t,N},\Phi_{o,\max}):=\sup\{c\ge0:\varphi_{t,N}\le c\Phi_{o,\max}+O(1)$ near $o\}=\frac{1}{c_o^{\mathfrak{q}}(\varphi_{t,N})}$.
	Note that $\mathscr{A}(\nu_{t,N})+\nu_{t,N}(\mathfrak{q})=1$.
	Thus, we have
	$$x> tc_o^{\mathfrak{q}}(\varphi_{t,N})=\frac{t}{\sigma(\varphi_{t,N},\Phi_{o,\max})}>\frac{\mathscr{A}(\nu_{t,N})+\nu_{t,N}(\mathfrak{q})}{\frac{\sigma(\log|\mathfrak{b}_t|,\Phi_{o,\max})}{t}}\ge\frac{\mathscr{A}(\nu_{t,N})+\nu_{t,N}(\mathfrak{q})}{\nu_{t,N}(\mathfrak{b}_{\bullet})}.$$
	Combining with inequality \eqref{eq:1023d}, we have
	$$\mathrm{lct}^{\mathfrak{q}}(\mathfrak{b}_{\bullet})=\inf_{\nu} \frac{\mathscr{A}(\nu)+\nu(\mathfrak{q})}{\nu(\mathfrak{b}_{\bullet})},$$
	where the infimum is over all Zhou valuations $\nu$. 
\end{proof}

Using Proposition \ref{p:inf-b}, we have the following remark. 

\begin{Remark}
	Note that for any local Zhou weight $\Phi_{o,\max}$, there exists $N\gg0$ such that $N\log|z|+O(1)\le \Phi_{o,\max}\le\frac{1}{N}\log|z|+O(1)$ near $o$. Thus Proposition \ref{p:inf-b} implies the following three statements are equivalent:
	
	$(1)$ $\mathrm{lct}^{\mathfrak{q}}(\mathfrak{b}_{\bullet})=+\infty$;
	
	$(2)$ $\nu(\mathfrak{b}_{\bullet})=0$ for any Zhou valuation $\nu$;
	
	$(3)$ there exists a Zhou valuation $\nu$ such that $\nu(\mathfrak{b}_{\bullet})=0$.
\end{Remark}

Let $\mathfrak{a}_{\bullet}$ be any graded sequence of ideals, and let $\mathfrak{q}$ be any nonzero ideal. Let $\mathfrak{b}_{\bullet}$ be the corresponding subadditive sequence of asymptotic multiplier ideals of $\mathfrak{a}_{\bullet}$, then we have $\nu(\mathfrak{a}_{\bullet})=\nu(\mathfrak{b}_{\bullet})$ and $\mathrm{lct}^{\mathfrak{q}}(\mathfrak{a}_{\bullet})=\mathrm{lct}^{\mathfrak{q}}(\mathfrak{b}_{\bullet})$. Thus, Proposition \ref{p:inf-b} implies the following

\begin{Proposition}
	\begin{equation}
		\nonumber
		\mathrm{lct}^{\mathfrak{q}}(\mathfrak{a}_{\bullet})=\inf_{\nu}\frac{\mathscr{A}(\nu)+\nu(\mathfrak{q})}{\nu(\mathfrak{a}_{\bullet})},
	\end{equation}
	where the infimum is over all Zhou valuations $\nu$. 
\end{Proposition}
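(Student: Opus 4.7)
The plan is to reduce the claim to the already-proved Proposition \ref{p:inf-b} by replacing the graded sequence $\mathfrak{a}_{\bullet}$ with its associated subadditive sequence of asymptotic multiplier ideals $\mathfrak{b}_{\bullet}$, defined by $\mathfrak{b}_t := \mathcal{I}\bigl(\tfrac{t}{m}\log|\mathfrak{a}_m|\bigr)_o$ for $m$ divisible enough (depending on $t$). By Lemma \ref{l:subaddtive} (the subadditivity theorem), $\mathfrak{b}_{\bullet}$ is indeed a subadditive sequence of ideals, so Proposition \ref{p:inf-b} applies and gives
$$\mathrm{lct}^{\mathfrak{q}}(\mathfrak{b}_{\bullet})=\inf_{\nu}\frac{\mathscr{A}(\nu)+\nu(\mathfrak{q})}{\nu(\mathfrak{b}_{\bullet})},$$
where the infimum is over all Zhou valuations $\nu$ on $\mathcal{O}_o$.

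Next I would invoke the two identities that transfer this formula back to $\mathfrak{a}_{\bullet}$. On the one hand, the comparison of jumping numbers $\mathrm{lct}^{\mathfrak{q}}(\mathfrak{a}_{\bullet})=\mathrm{lct}^{\mathfrak{q}}(\mathfrak{b}_{\bullet})$ is precisely the content of the introductory remark on asymptotic multiplier ideals (and is standard, cf.\ \cite{JM12}). On the other hand, for every Zhou valuation $\nu$, Lemma \ref{l:a.b.} gives $\nu(\mathfrak{a}_{\bullet})=\nu(\mathfrak{b}_{\bullet})$. Substituting both identities into the displayed equality for $\mathrm{lct}^{\mathfrak{q}}(\mathfrak{b}_{\bullet})$ yields the desired formula for $\mathrm{lct}^{\mathfrak{q}}(\mathfrak{a}_{\bullet})$.

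There is no genuine obstacle here: the proposition is a one-line formal corollary of Proposition \ref{p:inf-b} together with the two matching identities just recalled. The only point requiring a brief check is that Lemma \ref{l:a.b.} asserts $\nu(\mathfrak{a}_{\bullet})=\nu(\mathfrak{b}_{\bullet})$ only for Zhou valuations; but this is exactly the class over which the infimum in Proposition \ref{p:inf-b} ranges, so the substitution is legitimate and the proof closes immediately.
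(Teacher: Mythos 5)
Your proposal is correct and follows essentially the same route as the paper: both pass to the subadditive sequence of asymptotic multiplier ideals $\mathfrak{b}_{\bullet}$, apply Proposition \ref{p:inf-b}, and transfer the result back via the identities $\mathrm{lct}^{\mathfrak{q}}(\mathfrak{a}_{\bullet})=\mathrm{lct}^{\mathfrak{q}}(\mathfrak{b}_{\bullet})$ and $\nu(\mathfrak{a}_{\bullet})=\nu(\mathfrak{b}_{\bullet})$ (the latter from Lemma \ref{l:a.b.} for Zhou valuations). Your remark that Lemma \ref{l:a.b.} only needs to hold for Zhou valuations, which is exactly the class over which the infimum ranges, is the right observation and matches the paper's reasoning.
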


\section{Zhou valuations and quasimonomial valuations}
In this section, we consider the relationship of Zhou valuations and quasimonomial valuations, and prove Remark \ref{rmk-ana.sing.is.quasimonomial}. Before the proof, we recall the concepts of quasimonomial valuation and log resolution for convenience.

\subsection{Quasimonomial valuation}\label{subsec-qm.val}

Recall the definition of quasimonomial valuation (see \cite{BFJ08,ELS03,FM05j,JM12,JM14,xu2019}). Let $X$ be a separated, regular, connected, excellent scheme over $\mathbb{Q}$. Let $\pi\colon Y\to X$ be a proper birational morphism, with $Y$ regular and connected, and $y=(y_1,\ldots,y_r)$ a system of algebraic coordinates at a point $\eta\in Y$ such that $Y$ is regular at $\eta$. Given $\alpha=(\alpha_1,\ldots,\alpha_r)\in\mathbb{R}_{\ge 0}^r\setminus\{0\}$. Denote
\[y^{\beta}=\prod_{i=1}^r y_i^{\beta_i}, \ \text{for} \ \beta=(\beta_1,\ldots,\beta_r)\in\mathbb{Z}_{\ge 0}^r,\]
and $\langle\alpha,\beta\rangle:=\sum_{i=1}^r\alpha_i\beta_i$ for $\alpha,\beta\in\mathbb{R}^r$. We can associate $\alpha$ with a valuation $\nu_{\alpha}$ as follows. For each $f\in\mathcal{O}_{Y,\eta}$, we write $f=\sum_{\beta\in\mathbb{Z}_{\ge 0}^r}c_{\beta}y^{\beta}$ with $c_{\beta}\in\widehat{\mathcal{O}_{Y,\eta}}$ either zero or a unit. Set
\[\nu_{\alpha}(f)=\min\big\{\langle \alpha,\beta\rangle\colon c_{\beta}\neq 0\big\}.\]
A \emph{quasimonomial valuation} is a valuation that can be written in the above form. It is well-known that a valuation $\nu$ on $X$ is quasimonomial if and only if the equality holds in the Abhyankar inequality
\[\mathrm{tr.deg\,}\nu+\mathrm{rat.rk\,}\nu\le \dim X\]
of this valuation (\cite{ELS03}, see also \cite[Proposition 3.7]{JM12}).

\subsection{Log resolution}

In addition, let us recall the log resolution of an ideal sheaf (see \cite{Hi64,Tem18}). Let $\mathfrak{a}\subseteq\mathcal{O}_X$ be a nonzero ideal sheaf on $X$ (where $X$ is a scheme in the setting of Section \ref{subsec-qm.val}, or simply a complex variety). A \emph{log resolution} of $\mathfrak{a}$ is a projective birational map $\pi\colon Y\to X$ such that
\[\pi^{-1}\mathfrak{a}\coloneqq  \mathfrak{a}\cdot\mathcal{O}_Y=\mathcal{O}_{Y}(-F),\]
where $F$ is an effective divisor on $Y$ such that $F+\mathrm{Exc}(\pi)$ has simple normal crossing support. Moreover, the relative canonical divisor
\[K_{Y/X}=K_Y-\pi^*K_X\]
is an effective divisor supported on the exceptional locus of $\pi$.

\subsection{Proof of Remark \ref{rmk-ana.sing.is.quasimonomial}}
\begin{proof}[Proof of Remark \ref{rmk-ana.sing.is.quasimonomial}]
	We may assume
	\[\Psi=c\log|\mathfrak{a}|,\]
	where $\mathfrak{a}$ is a nonzero ideal of $\mathcal{O}_o$ and $c\in\mathbb{R}_+$. Take a log resolution of $\mathfrak{a}$, denoted by $\pi\colon Y\to U$ ($U$ is an open neighborhood of $o$), such that $\mathfrak{a}\cdot\mathcal{O}_Y=\mathcal{O}_Y(-D)$, where $D=\sum_{1\le i\le s}a_iD_i$ is an effective divisor on $Y$ with simple normal crossing support. Thus,
	\[\Psi\circ\pi \sim c\sum_{1\le i\le s}a_i\log|u_i|,\]
	where $u_i$ are the local generators of $D_i$. As $o$ is an isolated singularity of $\Psi$, the pole set of $\Psi\circ \pi$ is compact (shrinking $U$ if necessary). Then we can verify that,
	\[\nu(g)=\sigma(\log|g|,\Psi; o)=\min_{p}\sigma(\log|g\circ\pi|,\Psi\circ\pi; p)=\min_{1\le i\le s}\frac{\mathrm{ord}_{D_i}(g)}{ca_i}, \ \forall (g,o)\in\mathcal{O}_o,\]
	where the minimum in the third term is over all singlar point $p$ of $\Psi\circ\pi$, and $\sigma(\cdot,\cdot;p)$ denotes the relative type near the point $p$. Since $\nu$ is a valuation by the assumption, there must be some $i_0$ such that $\nu=\mathrm{ord}_{D_{i_0}}(g)/ca_{i_0}$ (according to the following Lemma \ref{lem-min.val.is.val}), which shows that $\nu$ is a divisorial valuation.
\end{proof}

\begin{Lemma}\label{lem-min.val.is.val}
	Let $R$ be an integral domain, and $\{\nu_1,\ldots,\nu_s\}$ be a finite collection of homomorphisms of $R^*=R\setminus\{0\}$ into $\mathbb{R}_{\ge 0}$, i.e. for each $i\in\{1,\ldots,s\}$,
	\[\nu_i(ab)=\nu_i(a)+\nu_i(b), \ \forall a,b\in R^*.\]
	Set $\nu\colon R^*\to \mathbb{R}_{\ge 0}$ such that $\nu(a)=\min_{1\le i\le s}\nu_i(a)$ for every $a\in R^*$. If $\nu$ is also a homomorphism of $R^*$ into $\mathbb{R}_{\ge 0}$, then there must exist some $i_0\in\{1,\ldots,s\}$ such that $\nu=\nu_{i_0}$.
\end{Lemma}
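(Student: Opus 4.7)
I plan to argue by contradiction, combining strict inequalities at each index via a single product and exploiting additivity of all the maps.

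Suppose, toward a contradiction, that $\nu \neq \nu_i$ for every $i \in \{1,\ldots,s\}$. Since $\nu(a) \le \nu_i(a)$ for all $a \in R^*$ by definition of the minimum, this means that for each $i$ there exists some element $a_i \in R^*$ with
\[
\nu(a_i) < \nu_i(a_i).
\]
Form the product $a := a_1 a_2 \cdots a_s \in R^*$, which lies in $R^*$ since $R$ is an integral domain. The idea is that this single element $a$ simultaneously witnesses a strict gap under every $\nu_i$.

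Next I would use additivity in two different ways. On the one hand, $\nu$ itself is additive by hypothesis, so
\[
\nu(a) = \sum_{i=1}^s \nu(a_i) = \sum_{i=1}^s \min_{1\le j\le s}\nu_j(a_i).
\]
On the other hand, each $\nu_k$ is additive, so $\nu_k(a) = \sum_{i=1}^s \nu_k(a_i)$; since $\nu(a)=\min_k\nu_k(a)$, there is some index $k_0$ with
\[
\sum_{i=1}^s \nu_{k_0}(a_i) = \nu_{k_0}(a) = \nu(a) = \sum_{i=1}^s \nu(a_i).
\]
Because $\nu_{k_0}(a_i) \ge \nu(a_i)$ term by term, equality of the sums forces $\nu_{k_0}(a_i) = \nu(a_i)$ for every $i$. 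Specializing to $i = k_0$ yields $\nu_{k_0}(a_{k_0}) = \nu(a_{k_0}) < \nu_{k_0}(a_{k_0})$, a contradiction.

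\textbf{Main obstacle.} There is no deep obstruction here; the entire proof rests on finding the right ``test element,'' and the product $a_1\cdots a_s$ does the job because additivity converts the pointwise strict inequalities into an aggregate strict inequality that the minimum cannot accommodate. The only subtlety worth double-checking is that $a \in R^*$, which uses that $R$ is an integral domain so none of the $a_i$ can multiply to zero.
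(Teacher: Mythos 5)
Your proof is correct and uses essentially the same key idea as the paper: form the product of the witnesses $a_1\cdots a_s$, apply additivity of $\nu$ and of the $\nu_k$ achieving the minimum at the product, and conclude that termwise equality is forced, contradicting the choice of $a_{k_0}$. The paper merely packages this computation as a finite-intersection property of the sets $I_a=\{i:\nu(a)=\nu_i(a)\}$ before deriving the contradiction, so the two arguments are the same in substance.
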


\begin{proof}
	Suppose $\nu$ is a homomorphism of $R^*$ into $\mathbb{R}_{\ge 0}$. For every $a\in R^*$, denote
	\[I_{a}\coloneqq \{i\colon \nu(a)=\nu_{i}(a)\}\subset \{1,\ldots,s\}.\]

	First, we observe that for any finite subset $\{a_1,\ldots,a_r\}$ of $R^*$, $I_{a_1}\cap \cdots\cap I_{a_r}\neq\emptyset$. In fact, be the assumption, for each $k\in I_{a_1\cdots a_r}$, we have
	\begin{flalign*}
		\begin{split}
			\nu_k(a_1\cdots a_r)&=\nu(a_1\cdots a_r)\\
			&=\nu(a_1)+\cdots+\nu(a_r)\\
			&\le \nu_k(a_1)+\cdots+\nu_k(a_r)\\
			&=\nu_k(a_1\cdots a_r),
		\end{split}
	\end{flalign*}
	which shows $\nu_k(a_j)=\nu(a_j)$ for all $j\in\{1,\ldots,r\}$, and thus $k\in I_{a_1}\cap\cdots \cap I_{a_r}$. 

	Next, we prove that there exists $i_0\in\{1,\ldots,s\}$ such that $i_0\in I_a$ for every $a\in R^*$. If not, then for each $i\in\{1,\ldots,s\}$, there exists $b_i\in R^*$ such that $i\notin I_{b_i}$, and it follows that $I_{b_1}\cap\cdots\cap I_{b_s}=\emptyset$, which contradicts to the above observation.

	Consequently, we get $\nu(a)=\nu_{i_0}(a)$ for every $a\in R^*$, and the proof is completed.
\end{proof}

\section{Proofs of Theorem \ref{thm:compute2}, Theorem \ref{the:char} and Proposition \ref{c:exist}}

In this section, we prove Theorem \ref{thm:compute2}, Theorem \ref{the:char} and Proposition \ref{c:exist}.

First, we prove Theorem \ref{thm:compute2}.

\begin{proof}[Proof of Theorem \ref{thm:compute2}]
	Following from Lemma \ref{r:exists}, there is a local Zhou weight $\Phi_{o,\max}$ related to $|\mathfrak{q}|^2$ satisfying that 
	$$\Phi_{o,\max}\ge c_o^{\mathfrak{q}}(\varphi)\varphi$$ near $o$, and denote its corresponding Zhou valuation by $\nu$. Then we have $\mathscr{A}(\nu)+\nu(\mathfrak{q})=1$ and $\sigma(\varphi,\Phi_{o,\max})=\frac{1}{c_o^{\mathfrak{q}}(\varphi)}$. Note that $\mathrm{lct}^{\mathfrak{q}}(\mathfrak{b}^{\varphi}_{\bullet})=c_o^{\mathfrak{q}}(\varphi)$ (see \cite{JM14}). Using Lemma \ref{l:contro}, we get $$\mathrm{lct}^{\mathfrak{q}}(\mathfrak{b}^{\varphi}_{\bullet})\cdot	\nu(\mathfrak{b}^{\varphi}_{\bullet})=1=\mathscr{A}(\nu)+\nu(\mathfrak{q}).$$  Theorem \ref{thm:compute2} holds.
\end{proof}

Next, we prove Theorem \ref{the:char}.

\begin{proof}
	[Proof of Theorem \ref{the:char}]
	Firstly, we prove the necessity. Assume that there exists a local Zhou weight $\Phi_{o,\max}$ related $|\mathfrak{q}_0|^2$ near $o$ such that $$\nu(f,\Phi_{o,\max}):=\sigma(\log|f|,\Phi_{o,\max})=\nu(f)$$
	for any $(f,o)\in\mathcal{O}_o$.
	Note that there exist two positive constants $N_1$ and $N_2$ such that $$N_1\log|z|+O(1)\le \Phi_{o,\max}\le N_2\log|z|+O(1)$$
	near $o$, then we obtain that $\nu(f,\Phi_{o,\max})=\sup\{c\ge0:\log|f|\le c\Phi_{o,\max}+O(1)$ near $o\}=0$ if and only if $f(o)\not=0$, which shows that the statement $(1)$ holds.
	By the definition of $\mathfrak{a}_m^{\nu}$, we have 
	$$\log|\mathfrak{a}_m^{\nu}|\le m\Phi_{o,\max}+O(1)$$
	near $o$. Since $|\mathfrak{q}_0|^2e^{-2\Phi_{o,\max}}$ is not integrable near $o$, $|\mathfrak{q}_0|^2e^{-\frac{2}{m}\log|\mathfrak{a}_m^{\nu}|}$ is not integrable near $o$ for any $m$. 
	
	If there exists a valuation $\tilde\nu$ on $\mathcal{O}_o$ satisfying the statements $(1)$ and $(2)$, and 
	\begin{equation}
		\label{eq:1005c}\tilde\nu(f)\ge\nu(f)=\nu(f,\Phi_{o,\max})
	\end{equation}
	for any $(f,o)\in\mathcal{O}_o$. It follows from Lemma \ref{l:valuation--psh} that there exists a negative plurisubharmonic function $\varphi_{\tilde\nu}$ on $D$ such that for any $f\in\mathcal{O}(D)$ satisfying $|f|\le1$ on $D$,
	\begin{equation}
		\label{eq:1005e}\log|f|\le \tilde\nu(f)\varphi_{\nu}
	\end{equation}
	on $D$,
	$|\mathfrak{q}_0|^2e^{-2\varphi_{\tilde\nu}}$ is not integrable near $o$ and $|\mathfrak{q}_0|^2e^{-2\varphi_{\tilde\nu}}|z|^N$ is integrable near $o$ for large enough $N$, where $D$ is the unit ball in $\mathbb{C}^n$. 
	By the construction of $\varphi_{\tilde\nu}$ in the proof of Lemma \ref{l:valuation--psh}, we have
	\begin{equation}
		\label{eq:1005d}
		\varphi_{\tilde\nu}=\left(\sup\left\{\frac{\log|f(w)|}{\tilde\nu(f)} : f\in \mathcal{O}(D), \ \sup_D|f|\le 1, \ f(o)=0, \  f\not\equiv 0\right\}\right)^*.
	\end{equation}
	Lemma \ref{l:local-glabal} shows that there is a global Zhou weight $\Phi_{o,\max}^D$ on $D$ related to $|\mathfrak{q}_0|^2$ satisfying $$\Phi_{o,\max}^D=\Phi_{o,\max}+O(1)$$
	near $o$. 
	It follows from inequality \eqref{eq:1005c}, equality \eqref{eq:1005d} and Lemma \ref{cor-approximation} that
	\begin{equation}
		\nonumber
		\begin{split}
			\varphi_{\tilde\nu}&=\left(\sup\left\{\frac{\log|f(w)|}{\tilde\nu(f)} : f\in \mathcal{O}(D), \ \sup_D|f|\le 1, \ f(o)=0, \  f\not\equiv 0\right\}\right)^*\\
			\ge&\left(\sup\left\{\frac{\log|f(w)|}{\nu(f)} : f\in \mathcal{O}(D), \ \sup_D|f|\le 1, \ f(o)=0, \  f\not\equiv 0\right\}\right)^*\\
			=&\Phi_{o,\max}^D.
		\end{split}
	\end{equation}
	By the definition of the global Zhou weight $\Phi_{o,\max}^D$, we have $\varphi_{\tilde\nu}=\Phi_{o,\max}^D$ on $D$.
	Using inequality \eqref{eq:1005e}, we have 
	$$\log|f|\le \tilde\nu(f)\varphi_{\tilde\nu}=\tilde\nu(f)\Phi_{o,\max}^D,$$
	 for any $f\in\mathcal{O}(D)$ satisfying $|f|\le 1$ on $D$, which shows that  $\nu(f,\Phi_{o,\max})\ge \tilde\nu(f)$  for any  polynomial $f$ on $\mathbb{C}^n$. Using Lemma \ref{l:1029_1},  we have $$\nu(f,\Phi_{o,\max})\ge \tilde\nu(f)$$
	for any $(f,o)\in\mathcal{O}_o$,  which shows that 
	$$\nu(\cdot,\Phi_{o,\max})= \tilde\nu$$ by the assumption $\tilde\nu\ge \nu(\cdot,\Phi_{o,\max})$. Thus, the statement $(3)$ holds.

	Now, we prove the sufficiency. Assume that the three statements in Theorem \ref{the:char} hold. Lemma \ref{l:valuation--psh} shows that  there exists a plurisubharmonic function $\varphi_{\nu}$ on $D$ such that for any $f\in\mathcal{O}(D)$ satisfying $|f|\le 1$ on $D$,
	\begin{equation}
		\label{eq:1005a}\log|f|\le \nu(f)\varphi_{\nu}
	\end{equation}
	on $D$,
	$|\mathfrak{q}_0|^2e^{-2\varphi_{\nu}}$ is not integrable near $o$ and $|\mathfrak{q}_0|^2e^{-2\varphi_{\nu}}|z|^N$ is integrable near $o$ for large enough $N$, where $D$ is the unit ball in $\mathbb{C}^n$. 
	
	It follows from Lemma \ref{r:exists} that there exists a global Zhou weight $\Phi_{o,\max}^D$ related to $|\mathfrak{q}_0|^2$ such that 
	\begin{equation}
		\label{eq:1005b}\Phi_{o,\max}^D\ge \varphi_{\nu}
	\end{equation} 
	on $D$. Following from inequalities \eqref{eq:1005a} and \eqref{eq:1005b}, we obtain that  the Zhou valuation 
	$$\nu(f,\Phi_{o,\max}^D)\ge \nu(f)$$ for any  polynomial $f$ on $\mathbb{C}^n$. Using Lemma \ref{l:1029_1}, we obtain that
	$$\nu(f,\Phi_{o,\max}^D)\ge \nu(f)$$ for any $(f,o)\in\mathcal{O}_o$.
	 Note that the Zhou valuation $\nu(\cdot,\Phi_{o,\max}^D)$ satisfies the statements $(1)$ and $(2)$, then it follows from the statement $(3)$ that 
	$$\nu(\cdot,\Phi_{o,\max}^D)= \nu,$$
	i.e., $\nu$ is a Zhou valuation related to $|\mathfrak{q}_0|^2$.
	
	Thus, Theorem \ref{the:char} holds.
\end{proof}

Finally, we prove Proposition \ref{c:exist}.

\begin{proof}
	[Proof of Proposition \ref{c:exist}]
	Lemma \ref{l:valuation--psh} shows that there exists a negative plurisubharmonic function $\varphi_{\nu}$ on $D$ such that for any $f\in\mathcal{O}(D)$ satisfying $|f|\le 1$ on $D$,
	\begin{equation}
		\label{eq:1029b}
		\log|f|\le \nu(f)\varphi_{\nu}
	\end{equation}
    on $D$,
	$|\mathfrak{q}_0|^2e^{-2\varphi_{\nu}}$ is not integrable near $o$ and $|\mathfrak{q}_0|^2e^{-2\varphi_{\nu}}|z|^N$ is integrable near $o$ for large enough $N$, where $D$ is the unit ball in $\mathbb{C}^n$. It follows from Lemma \ref{r:exists} there exists a global Zhou weight $\Phi_{o,\max}^D$ on $D$ related to $|\mathfrak{q}_0|^2$ such that $$\Phi_{o,\max}^D\ge\varphi_{\nu}$$ on $D$. For any $f\in\mathcal{O}(D)$ satisfying $|f|\le 1$ on $D$, 
	$$\log|f|\le \nu(f)\varphi_{\nu}\le \nu(f)\Phi_{o,\max}^D.$$
	Combining Lemma \ref{l:1029_1}, we obtain that for any $(f,o)\in\mathcal{O}_o$,
	 the Zhou valuation $$\nu(f,\Phi_{o,\max}^D):=\sigma(\log|f|,\Phi_{o,\max})\ge\nu(f).$$ Thus, Proposition \ref{c:exist} holds.
\end{proof}

\section{Denseness of the Zhou valuations}
In this section, we discuss the denseness property of the subcone $\mathrm{ZVal}_o$ in $\mathrm{Val}_o$, and complete the proofs of Theorem \ref{thm.nu.inf.Zhou.val} and \ref{thm-ZVal.dense}.

\subsection{Some facts}
First we list some facts which will be used.

\begin{Lemma}
    Suppose there are two ideals $\mathfrak{a}\subset\mathfrak{a}'$, then for any nonzero ideal $\mathfrak{q}$, $\mathrm{lct}^{\mathfrak{q}}(\mathfrak{a})\le\mathrm{lct}^{\mathfrak{q}}(\mathfrak{a}')$.
    
    As a consequence, for two graded sequence of ideals $\mathfrak{a}_{\bullet},\mathfrak{a}'_{\bullet}$ satisfying $\mathfrak{a}_m\subset\mathfrak{a}'_m$ for sufficiently large $m$, it holds that $\mathrm{lct}^{\mathfrak{q}}(\mathfrak{a}_{\bullet})\le\mathrm{lct}^{\mathfrak{q}}(\mathfrak{a}'_{\bullet})$. Especially, for two valuations $\nu,\nu'$ satisfying $\nu\le\nu'$, it holds that $\mathrm{lct}^{\mathfrak{q}}(\mathfrak{a}^{\nu}_{\bullet})\le\mathrm{lct}^{\mathfrak{q}}(\mathfrak{a}^{\nu'}_{\bullet})$.
\end{Lemma}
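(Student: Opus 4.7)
The statement is a monotonicity result, so my plan is to reduce everything to the single-ideal case and then propagate the inequality through the limits defining $\mathrm{lct}^{\mathfrak{q}}$ for graded sequences and for the ideals $\mathfrak{a}^\nu_\bullet$.

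For the base case $\mathfrak{a}\subset\mathfrak{a}'$, the plan is to observe that if $h_1,\ldots,h_k$ generate $\mathfrak{a}$ and $h'_1,\ldots,h'_l$ generate $\mathfrak{a}'$, then each $h_j\in\mathfrak{a}'$ can be written as a $\mathcal{O}_o$-combination of the $h'_i$. Hence there is a constant $C>0$ (depending on a choice of neighborhood of $o$) such that $|\mathfrak{a}|^2\le C|\mathfrak{a}'|^2$, equivalently $\log|\mathfrak{a}|\le\log|\mathfrak{a}'|+O(1)$ near $o$. Consequently, for any $c\ge 0$, integrability of $|\mathfrak{q}|^2 e^{-2c\log|\mathfrak{a}|}$ near $o$ implies integrability of $|\mathfrak{q}|^2 e^{-2c\log|\mathfrak{a}'|}$ near $o$ (up to the harmless constant $C^{-2c}$). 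Taking the supremum over admissible $c$ in the definition $\mathrm{lct}^{\mathfrak{q}}(\mathfrak{a})=c_o^{\mathfrak{q}}(\log|\mathfrak{a}|)$ yields $\mathrm{lct}^{\mathfrak{q}}(\mathfrak{a})\le\mathrm{lct}^{\mathfrak{q}}(\mathfrak{a}')$.

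For the graded sequence statement, I apply the base case termwise: for all $m$ large enough, $\mathfrak{a}_m\subset\mathfrak{a}'_m$ gives $m\cdot\mathrm{lct}^{\mathfrak{q}}(\mathfrak{a}_m)\le m\cdot\mathrm{lct}^{\mathfrak{q}}(\mathfrak{a}'_m)$, and then passing to the limit (equivalently the supremum) via the formula
\[
\mathrm{lct}^{\mathfrak{q}}(\mathfrak{a}_\bullet)=\lim_{m\to+\infty}m\cdot\mathrm{lct}^{\mathfrak{q}}(\mathfrak{a}_m)=\sup_m m\cdot\mathrm{lct}^{\mathfrak{q}}(\mathfrak{a}_m)
\]
recalled in the introduction yields the claim.

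For the valuation case, note that if $\nu\le\nu'$ pointwise on $\mathcal{O}_o^*$, then $\nu(f)\ge m$ implies $\nu'(f)\ge\nu(f)\ge m$; hence $\mathfrak{a}^{\nu}_m\subset\mathfrak{a}^{\nu'}_m$ for every $m$, and the previous step applies. The only subtlety worth flagging is the direction of the inclusion: the larger valuation produces the larger valuation ideal (not the smaller one), so the inequality $\mathrm{lct}^{\mathfrak{q}}(\mathfrak{a}^{\nu}_\bullet)\le\mathrm{lct}^{\mathfrak{q}}(\mathfrak{a}^{\nu'}_\bullet)$ is consistent. There is no genuine obstacle here; the proof is a routine unwinding of definitions, with the only point requiring care being the $O(1)$ comparison $\log|\mathfrak{a}|\le\log|\mathfrak{a}'|+O(1)$ used to equate integrability of $|\mathfrak{q}|^2e^{-2c\log|\mathfrak{a}|}$ and $|\mathfrak{q}|^2e^{-2c\log|\mathfrak{a}'|}$.
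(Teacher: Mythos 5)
Your proposal is correct; the paper itself dismisses this lemma with ``It is easy to be checked by the definitions,'' and your argument is exactly the routine verification it has in mind: the inclusion $\mathfrak{a}\subset\mathfrak{a}'$ gives $\log|\mathfrak{a}|\le\log|\mathfrak{a}'|+O(1)$ near $o$, hence monotonicity of the jumping number, which then passes to the limit for graded sequences and specializes to valuation ideals via $\mathfrak{a}^{\nu}_m\subset\mathfrak{a}^{\nu'}_m$.
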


\begin{proof}
    It is easy to be checked by the definitions.
\end{proof}

\begin{Lemma}\label{lem-existence.hat.nu.ge.nu}
    Let $\mathfrak{q}$ be a nonzero ideal of $\mathcal{O}_o$, $\nu\in\mathrm{Val}_o$, and $\mathfrak{a}_{m}^{\nu}:=\big\{h\in\mathcal{O}_o : \nu(h)\ge m\big\}$ be the graded sequence provided by $\nu$. If $\mathrm{lct}^{\mathfrak{q}}(\mathfrak{a}_{\bullet}^{\nu})\le 1$, then there exists a Zhou valuation $\tilde{\nu}$ related to $|\mathfrak{q}|^2$ such that $\tilde{\nu}\ge \nu$.
\end{Lemma}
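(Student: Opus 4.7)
The plan is to verify that the valuation $\nu$ satisfies conditions $(1)$ and $(2)$ of Theorem \ref{the:char} with $\mathfrak{q}_0 = \mathfrak{q}$, and then apply Proposition \ref{c:exist} to produce a Zhou valuation $\tilde\nu \ge \nu$ related to $|\mathfrak{q}|^2$.

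Condition $(1)$ is forced by $\nu \in \mathrm{Val}_o$ being centered: since $\nu(\mathfrak{m}) > 0$, every $f \in \mathfrak{m}$ satisfies $\nu(f) \ge \nu(\mathfrak{m}) > 0$, while any unit $u$ satisfies $\nu(u) + \nu(u^{-1}) = \nu(1) = 0$ and hence $\nu(u) = 0$. Thus $\{f \in \mathcal{O}_o : \nu(f) > 0\} = \mathfrak{m}$. As a byproduct, each $\mathfrak{a}_m^\nu$ is a nonzero ideal (since $z_i^k \in \mathfrak{a}_m^\nu$ once $k\nu(z_i) \ge m$), so $\log|\mathfrak{a}_m^\nu|$ is a genuine plurisubharmonic germ near $o$.

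For condition $(2)$, combining $\mathrm{lct}^{\mathfrak{q}}(\mathfrak{a}_\bullet^\nu) = \sup_m m \cdot \mathrm{lct}^{\mathfrak{q}}(\mathfrak{a}_m^\nu)$ with $\mathrm{lct}^{\mathfrak{q}}(\mathfrak{a}_m^\nu) = c_o^{\mathfrak{q}}(\log|\mathfrak{a}_m^\nu|)$, the hypothesis $\mathrm{lct}^{\mathfrak{q}}(\mathfrak{a}_\bullet^\nu) \le 1$ yields $c_o^{\mathfrak{q}}(\log|\mathfrak{a}_m^\nu|) \le 1/m$ for every $m$. The main step is to rule out inclusion at the borderline exponent $c = 1/m$: if $\mathfrak{q} \subset \mathcal{I}(\tfrac{1}{m}\log|\mathfrak{a}_m^\nu|)_o$ held, then strong openness (applied to the finitely many generators of $\mathfrak{q}$ and taking the minimum of the resulting exponents) would produce some $p > 1$ with $\mathfrak{q} \subset \mathcal{I}(\tfrac{p}{m}\log|\mathfrak{a}_m^\nu|)_o$, i.e. $c_o^{\mathfrak{q}}(\log|\mathfrak{a}_m^\nu|) \ge p/m > 1/m$, contradicting the previous inequality. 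Hence $\mathfrak{q} \not\subset \mathcal{I}(\tfrac{1}{m}\log|\mathfrak{a}_m^\nu|)_o$ for every $m$, which is exactly condition $(2)$ for $\mathfrak{q}_0 = \mathfrak{q}$. Proposition \ref{c:exist} then produces a Zhou valuation $\tilde\nu$ related to $|\mathfrak{q}|^2$ with $\tilde\nu \ge \nu$, and the invocation of strong openness to handle the borderline case is the only non-formal ingredient; everything else is a direct bookkeeping check against the definitions.
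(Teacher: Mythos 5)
Your proposal is correct and follows essentially the same route as the paper: verify conditions $(1)$ and $(2)$ of Theorem \ref{the:char} (condition $(1)$ from centeredness, condition $(2)$ from $\mathrm{lct}^{\mathfrak{q}}(\mathfrak{a}_m^{\nu})\le 1/m$ together with strong openness to exclude the borderline inclusion) and then invoke Proposition \ref{c:exist}. The paper compresses the borderline step into the word ``equivalently,'' while you spell it out, but the argument is the same.
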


\begin{proof}
    Since $\mathrm{lct}^{\mathfrak{q}}(\mathfrak{a}^{\nu}_{m})=\sup_{m\ge 1}m\cdot\mathrm{lct}^{\mathfrak{q}}(\mathfrak{a}_m)$, we obtain $\mathrm{lct}^{\mathfrak{q}}(\mathfrak{a}_m)\le 1/m$, or equivalently $\mathfrak{q}\not\subseteq \mathcal{I}\big(\frac{1}{m}\log|\mathfrak{a}_m^{\nu}|\big)_o$, for any $m\ge 1$. Now Proposition \ref{c:exist} indicates that there exists a Zhou valuation $\tilde{\nu}$ related to $|\mathfrak{q}|^2$ such that $\tilde{\nu}\ge \nu$ (where the first statement holds since $\nu$ is centered).
\end{proof}

\begin{Remark}\label{rem-mathcal.A.finite}
    Let $\nu\in \mathrm{Val}_o$, and $\mathfrak{a}_{m}^{\nu}:=\big\{h\in\mathcal{O}_o : \nu(h)\ge m\big\}$ be the graded sequence provided by $\nu$. Then the following statements are equivalent:

    (1) there exists $\hat{\nu}\in\mathrm{ZVal}_o$ satisfying $\hat{\nu}\ge \nu$;

    (2) for any nonzero ideal $\mathfrak{q}$ of $\mathcal{O}_o$, $\mathrm{lct}^{\mathfrak{q}}(\mathfrak{a}_{\bullet}^{\nu})<+\infty$;

    (3) there exists a nonzero ideal $\mathfrak{q}$ of $\mathcal{O}_o$, such that $\mathrm{lct}^{\mathfrak{q}}(\mathfrak{a}_{\bullet}^{\nu})<+\infty$.

    Moreover, if $\mathscr{A}(\nu)<+\infty$, then all the statements (1)(2)(3) hold.
\end{Remark}

\begin{proof}
    $(1)\Rightarrow (2)$: If $\hat{\nu}\in\mathrm{ZVal}_o$ satisfying $\hat{\nu}\ge \nu$, then
    \[\mathrm{lct}^{\mathfrak{q}}(\mathfrak{a}_{\bullet}^{\nu})\le \mathrm{lct}^{\mathfrak{q}}(\mathfrak{a}_{\bullet}^{\hat\nu})\le \hat\nu(\mathfrak{q})+1-\hat\nu(\mathfrak{q}_0)<+\infty,\]
    where it is assumed that $\hat\nu$ is a Zhou valuation related to $|\mathfrak{q}_0|^2$.
    
    $(2)\Rightarrow (3)$ is trivial. $(3)\Rightarrow (1)$: If $\mathrm{lct}^{\mathfrak{q}}(\mathfrak{a}_{\bullet}^{\nu})<+\infty$, then $\nu/M$ is a centered valuation satisfying $\mathrm{lct}^{\mathfrak{q}}(\mathfrak{a}_{\bullet}^{\nu/M})\le 1$ for sufficiently large $M>0$. Lemma \ref{lem-existence.hat.nu.ge.nu} implies that there exists a Zhou valuation $\tilde\nu\ge\nu/M$ related to $|\mathfrak{q}|^2$. Now $M\tilde\nu\in\mathrm{ZVal}_o$ satisfying $M\hat\nu\ge\nu$.

    If $\mathscr{A}(\nu)<+\infty$, then the statements (1)(2)(3) hold by the definition of $\mathscr{A}(\nu)$ and the equivalence of (1), (2) and (3).
\end{proof}

\subsection{Tian functions with respect to graded sequences}

Let $\nu: \mathcal{O}_o\to\overline{\mathbb{R}}_{\ge 0}$ be a centered valuation, and $\mathfrak{a}$ a nonzero ideal in $\mathcal{O}_o$. For any nonzero ideal $\mathfrak{q}$ of $\mathcal{O}_o$, and $t>0$, denote
\[\mathrm{lct}^{\mathfrak{q}^t}(\mathfrak{a}):=\sup\big\{c\ge 0 : |\mathfrak{q}|^{2t}e^{-2c\log|\mathfrak{a}|} \ \text{is locally integrable near} \ o\big\}.\]
We have
\begin{Lemma}\label{lem-lct.qt.incr.concave}
    Suppose $\mathrm{lct}^{\mathfrak{q}}(\mathfrak{a})<+\infty$. Then the function
    \[\mathrm{Tn}(t;\mathfrak{a},\mathfrak{q}):=\mathrm{lct}^{\mathfrak{q}^t}(\mathfrak{a})\]
    is non-decreasing and concave with respect to $t\in (0,+\infty)$.
\end{Lemma}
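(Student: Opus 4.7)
The plan is to prove monotonicity by direct majorization of integrands and concavity by Hölder's inequality—exactly the classical approach used to show that log canonical thresholds depend concavely on a parameter.

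For monotonicity, I would fix $0 < t_1 \le t_2$. On a small enough neighborhood $U$ of $o$, the function $|\mathfrak{q}|$ (a finite sum of squared moduli of holomorphic generators) is bounded above by some constant $M > 0$. The key rewriting is
\[
|\mathfrak{q}|^{2t_2} e^{-2c\log|\mathfrak{a}|} = |\mathfrak{q}|^{2(t_2-t_1)} \cdot \bigl( |\mathfrak{q}|^{2t_1} e^{-2c\log|\mathfrak{a}|}\bigr) \le M^{2(t_2-t_1)}\,|\mathfrak{q}|^{2t_1} e^{-2c\log|\mathfrak{a}|}.
\]
Hence any $c \ge 0$ for which the integrand with exponent $t_1$ is integrable near $o$ also works with exponent $t_2$, giving $\mathrm{Tn}(t_1;\mathfrak{a},\mathfrak{q}) \le \mathrm{Tn}(t_2;\mathfrak{a},\mathfrak{q})$.

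For concavity, I would fix $t_1, t_2 > 0$ and $\lambda \in (0,1)$ (the endpoints being trivial), and pick arbitrary $c_j < \mathrm{Tn}(t_j;\mathfrak{a},\mathfrak{q})$, so that the functions $F_j := |\mathfrak{q}|^{2t_j} e^{-2c_j \log|\mathfrak{a}|}$ are integrable on a small neighborhood of $o$ for $j=1,2$. Hölder's inequality with conjugate exponents $1/\lambda$ and $1/(1-\lambda)$ then gives
\[
\int F_1^{\lambda} F_2^{1-\lambda} \le \Bigl(\int F_1\Bigr)^{\lambda} \Bigl(\int F_2\Bigr)^{1-\lambda} < +\infty,
\]
and a direct computation shows $F_1^{\lambda} F_2^{1-\lambda} = |\mathfrak{q}|^{2(\lambda t_1 + (1-\lambda) t_2)} e^{-2(\lambda c_1 + (1-\lambda) c_2)\log|\mathfrak{a}|}$. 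Therefore $\lambda c_1 + (1-\lambda) c_2 \le \mathrm{Tn}(\lambda t_1 + (1-\lambda) t_2;\mathfrak{a},\mathfrak{q})$, and letting $c_j \nearrow \mathrm{Tn}(t_j;\mathfrak{a},\mathfrak{q})$ yields the concavity inequality.

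I do not expect a real obstacle: the argument is essentially two lines of real analysis. The only point deserving care is the case where $\mathrm{Tn}(t;\mathfrak{a},\mathfrak{q})$ may equal $+\infty$ somewhere; the Hölder step still functions with arbitrarily large finite $c_j$, and the hypothesis $\mathrm{lct}^{\mathfrak{q}}(\mathfrak{a}) < +\infty$ together with monotonicity ensures $\mathrm{Tn}(t;\mathfrak{a},\mathfrak{q}) < +\infty$ for every $t \in (0,1]$, so the only place where infinite values could intrude is $t > 1$, which causes no problem for either the monotonicity or concavity assertion.
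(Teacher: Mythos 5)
Your proof is correct and follows essentially the same route as the paper's: a trivial pointwise bound (local boundedness of $|\mathfrak{q}|$) for monotonicity, and H\"older interpolation between two admissible exponents $c_1,c_2$ for concavity. The only cosmetic difference is that the paper separately proves $\mathrm{Tn}(t;\mathfrak{a},\mathfrak{q})<+\infty$ for $t>1$ via a H\"older estimate against the constant function $1$, whereas you leave this implicit; your remark that the concavity inequality holds in the extended sense (and hence, combined with $\mathrm{Tn}(1;\mathfrak{a},\mathfrak{q})<+\infty$, forces finiteness everywhere) does close that point.
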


\begin{proof}
    First, we show that for any $s<t\in (0,+\infty)$,
    \[\mathrm{lct}^{\mathfrak{q}^s}(\mathfrak{a})<+\infty \Leftrightarrow \mathrm{lct}^{\mathfrak{q}^t}(\mathfrak{a})<+\infty.\]
    It is clear that $\mathrm{lct}^{\mathfrak{q}^s}(\mathfrak{a})\le\mathrm{lct}^{\mathfrak{q}^t}(\mathfrak{a})$, which implies the ``$\Leftarrow$'' and that $\mathrm{Tn}( \ \cdot \ ;\mathfrak{a},\mathfrak{q})$ is increasing. Now we assume $\mathrm{lct}^{\mathfrak{q}^s}(\mathfrak{a})<+\infty$. Let $U$ be a bounded open neighborhood of $o$ in $\mathbb{C}^n$ and $c<+\infty$ satisfying that
    \[\int_{U}|\mathfrak{q}|^{2s}e^{-2c\log|\mathfrak{a}|}=+\infty.\]
    Then according to H\"{o}lder's inequality, we have
    \[\left(\int_U|\mathfrak{q}|^{2t}e^{-2tc\log|\mathfrak{a}|/s}\right)^{s/t}\left(\int_U 1\right)^{1-s/t}\ge\int_{U}|\mathfrak{q}|^{2s}e^{-2c\log|\mathfrak{a}|}=+\infty,\]
    which shows $\mathrm{lct}^{\mathfrak{q}^t}(\mathfrak{a})\le tc/s<+\infty$. 

    H\"{o}lder's inequality also gives the concavity of $\mathrm{Tn}(t;\mathfrak{a},\mathfrak{q})$. For any $t_1<t<t_2\in (0,+\infty)$, and $c_1,c_2\in [0,+\infty)$, H\"{o}lder's inequality implies
    \[\int_V|\mathfrak{q}|^{2t}e^{-2c\log|\mathfrak{a}|}\le\left(\int_V|\mathfrak{q}|^{2t_1}e^{-2c_1\log|\mathfrak{a}|}\right)^{1/\alpha}\left(\int_V |\mathfrak{q}|^{2t_2}e^{-2c_2\log|\mathfrak{a}|}\right)^{1-1/\alpha}\]
    for some bounded open neighborhood $V$ of $o$, where
    \[\alpha=\frac{t_2-t_1}{t_2-t}, \ c=\frac{t_2-t}{t_2-t_1}c_1+\frac{t-t_1}{t_2-t_1}c_2.\]
    We can deduce that
    \[\mathrm{Tn}(t;\mathfrak{a},\mathfrak{q})\ge\frac{t_2-t}{t_2-t_1}\mathrm{Tn}(t_1;\mathfrak{a},\mathfrak{q})+\frac{t-t_1}{t_2-t_1}\mathrm{Tn}(t_2;\mathfrak{a},\mathfrak{q}), \ \forall t_1<t<t_2,\]
    which shows the concavity of $\mathrm{Tn}(t;\mathfrak{a},\mathfrak{q})$.
\end{proof}

Let $\nu\in\mathrm{Val}_o$, and
\[\mathfrak{a}_{m}^{\nu}:=\big\{h\in\mathcal{O}_o : \nu(h)\ge m\big\},\]
where $m\in\mathbb{N}_+$. Then $\mathfrak{a}_{\bullet}^{\nu}$ is a graded sequence of ideals of $\mathcal{O}_o$. One can obtain
\begin{Lemma}\label{lem-lct.qt.avbullet}
    Suppose a nonzero ideal $\mathfrak{q}$ and a graded sequence of ideals $\mathfrak{a}_{\bullet}$ satisfy $\mathrm{lct}^{\mathfrak{q}}(\mathfrak{a}_{\bullet})<+\infty$. Then the function
    \[\mathrm{Tn}(t;\mathfrak{a}_{\bullet},\mathfrak{q}):=\mathrm{lct}^{\mathfrak{q}^t}(\mathfrak{a}_{\bullet})\]
    is increasing and concave with respect to $t\in (0,+\infty)$.

    Moreover, if $\mathfrak{a}_{\bullet}=\mathfrak{a}_{\bullet}^{\nu}$ for some $\nu\in\mathrm{Val}_o$ with $\mathscr{A}(\nu)<+\infty$, then
    \[\lim_{t\to+\infty}\frac{\mathrm{Tn}(t;\mathfrak{a}_{\bullet}^{\nu},\mathfrak{q})}{t}=\nu(\mathfrak{q}).\]
\end{Lemma}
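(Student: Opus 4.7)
My plan is to address the two assertions of the lemma in sequence. For the concavity/monotonicity claim, I will exploit the identity
\[\mathrm{Tn}(t;\mathfrak{a}_{\bullet},\mathfrak{q}) = \sup_m m\cdot\mathrm{Tn}(t;\mathfrak{a}_m,\mathfrak{q}) = \lim_{m\to\infty} m\cdot\mathrm{Tn}(t;\mathfrak{a}_m,\mathfrak{q}),\]
to which the preceding Lemma \ref{lem-lct.qt.incr.concave} applies on each individual ideal $\mathfrak{a}_m$. Non-decreasingness in $t$ passes to the supremum, while concavity in $t$ passes to the pointwise limit of concave functions. Finiteness at every $t>0$ follows by running the H\"older inequality step in the proof of Lemma \ref{lem-lct.qt.incr.concave} on each $\mathfrak{a}_m$ and passing to the supremum in $m$.

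For the ``moreover'' part I first record that $\nu(\mathfrak{a}_{\bullet}^{\nu}) = 1$: the inequality $\nu(\mathfrak{a}_m^{\nu})\ge m$ is built in, and any $(g,o)\in\mathcal{O}_o^*$ with $\nu(g)>0$ forces $\nu(\mathfrak{a}_m^{\nu})\le m+\nu(g)$ via the powers $g^k$. For the \emph{upper bound}, for every integer $t\ge 1$ the honest ideal $\mathfrak{q}^t$ satisfies $\nu(\mathfrak{q}^t)=t\nu(\mathfrak{q})$, and the very definition of $\mathscr{A}(\nu)$ applied to the pair $(\mathfrak{q}^t,\mathfrak{a}_{\bullet}^{\nu})$ gives
\[\mathrm{Tn}(t) \le \mathscr{A}(\nu) + t\nu(\mathfrak{q}).\]
Monotonicity of $\mathrm{Tn}$ from the first part extends this to $\mathrm{Tn}(t)\le\mathscr{A}(\nu)+(t+1)\nu(\mathfrak{q})$ for all real $t>0$, whence $\limsup_{t\to\infty}\mathrm{Tn}(t)/t\le\nu(\mathfrak{q})$.

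For the \emph{lower bound}, the key observation is the inclusion
\[\mathfrak{q}^n \subset \mathfrak{a}_{\lfloor n\nu(\mathfrak{q})\rfloor}^{\nu}, \qquad n\in\mathbb{Z}_{>0},\]
since every monomial generator $\prod f_i^{I_i}$ of $\mathfrak{q}^n$ (with $|I|=n$) has $\nu\ge n\nu(\mathfrak{q})$ and $\nu$ is ultrametric on sums. Setting $m:=\lfloor n\nu(\mathfrak{q})\rfloor$, I combine the multinomial bound $|\mathfrak{q}|^{2n}\le n^n|\mathfrak{q}^n|^2$ with Cauchy-Schwarz applied to expressions of generators of $\mathfrak{q}^n$ as holomorphic combinations of generators of $\mathfrak{a}_m^{\nu}$ to obtain a pointwise estimate $|\mathfrak{q}|^{2n}\le C_n|\mathfrak{a}_m^{\nu}|^2$ on a bounded neighborhood of $o$. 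Consequently $|\mathfrak{q}|^{2n}|\mathfrak{a}_m^{\nu}|^{-2c}$ is locally integrable near $o$ for every $c\le 1$, so $\mathrm{lct}^{\mathfrak{q}^n}(\mathfrak{a}_m^{\nu})\ge 1$ and hence $\mathrm{Tn}(n;\mathfrak{a}_{\bullet}^{\nu},\mathfrak{q})\ge m\ge n\nu(\mathfrak{q})-1$. Since $\mathrm{Tn}(0^+)=\mathrm{lct}(\mathfrak{a}_{\bullet}^{\nu})\in[0,\mathrm{Tn}(1)]$ is finite and non-negative, concavity forces $\mathrm{Tn}(t)/t$ to be non-increasing on $(0,\infty)$; the limit as $t\to\infty$ therefore exists and equals its value along any integer sequence, giving $\liminf_{t\to\infty}\mathrm{Tn}(t)/t\ge\nu(\mathfrak{q})$. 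Combined with the upper bound, $\lim_{t\to\infty}\mathrm{Tn}(t;\mathfrak{a}_{\bullet}^{\nu},\mathfrak{q})/t=\nu(\mathfrak{q})$.

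I expect the main obstacle to be justifying the pointwise estimate $|\mathfrak{q}|^{2n}\le C_n|\mathfrak{a}_m^{\nu}|^2$ uniformly on a fixed neighborhood of $o$; the constant $C_n$ may depend on $n$ and on the chosen representations of the generators, but this is harmless since the integrability threshold $c\le 1$ used to extract $\mathrm{lct}^{\mathfrak{q}^n}(\mathfrak{a}_m^{\nu})\ge 1$ does not depend on $C_n$.
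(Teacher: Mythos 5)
Your proof is correct and follows essentially the same route as the paper's: the first part (passing monotonicity to the supremum and concavity to the pointwise limit over $m$ via the preceding lemma) and the upper bound $\mathrm{Tn}(t;\mathfrak{a}^{\nu}_{\bullet},\mathfrak{q})\le\mathscr{A}(\nu)+t\nu(\mathfrak{q})$ for integer $t$ (using $\nu(\mathfrak{a}^{\nu}_{\bullet})=1$ and the definition of $\mathscr{A}(\nu)$ applied to the pair $(\mathfrak{q}^t,\mathfrak{a}^{\nu}_{\bullet})$) coincide with the paper's argument. The only divergence is in the lower bound: the paper fixes a rational $a/b\in(0,\nu(\mathfrak{q})]$, forms the auxiliary graded sequence $\mathfrak{q}_m=\mathfrak{q}^{mb}$ with $\mathfrak{q}_m\subset\mathfrak{a}^{\nu}_{ma}$, and compares $\mathrm{lct}^{\mathfrak{q}^k}$ of the two graded sequences to get $\lim_k\mathrm{Tn}(k)/k\ge a/b$, whereas you use the single inclusion $\mathfrak{q}^n\subset\mathfrak{a}^{\nu}_{\lfloor n\nu(\mathfrak{q})\rfloor}$ together with the elementary pointwise estimate $|\mathfrak{q}|^{2n}\le C_n|\mathfrak{a}^{\nu}_m|^2$ to conclude $\mathrm{lct}^{\mathfrak{q}^n}(\mathfrak{a}^{\nu}_m)\ge 1$ and hence $\mathrm{Tn}(n)\ge n\nu(\mathfrak{q})-1$ directly. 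Both variants rest on the same fact $\nu(\mathfrak{q}^n)\ge n\nu(\mathfrak{q})$ and the monotonicity of $\mathrm{lct}$ under inclusion; yours avoids the rational approximation and the auxiliary graded sequence at the cost of the (routine, and correctly flagged) justification of the pointwise comparison of generators, while the paper's stays entirely at the level of graded sequences.
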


\begin{proof}
    Recall that
    \[\mathrm{lct}^{\mathfrak{q}^t}(\mathfrak{a}_{\bullet})=\lim_{m\to\infty}m\cdot\mathrm{lct}^{\mathfrak{q}^t}(\mathfrak{a}_{m})=\sup_{m}m\cdot\mathrm{lct}^{\mathfrak{q}^t}(\mathfrak{a}_{m}).\]
    $\mathrm{lct}^{\mathfrak{q}}(\mathfrak{a}_{\bullet})<+\infty$ yields that $\mathrm{lct}^{\mathfrak{q}}(\mathfrak{a}_{m})<+\infty$ for any $m\in\mathbb{N}_+$. Lemma \ref{lem-lct.qt.incr.concave} shows that $\mathrm{lct}^{\mathfrak{q}^t}(\mathfrak{a}_{m})$ is increasing and concave with respect to $t\in (0,+\infty)$ for any fixed $m$. Then it follows that $\mathrm{lct}^{\mathfrak{q}^t}(\mathfrak{a}_{\bullet})$ is also increasing and concave. Especially, $\mathrm{lct}^{\mathfrak{q}^t}(\mathfrak{a}_{\bullet})<+\infty$ for any $t\in (0,+\infty)$.

    Next, let $\nu\in\mathrm{Val}_o$ with $\mathscr{A}(\nu)<+\infty$. It follows that $\mathrm{lct}^{\mathfrak{q}}(\mathfrak{a}^{\nu}_{\bullet})<+\infty$. Then $\mathrm{Tn}(t;\mathfrak{a}_{\bullet}^{\nu},\mathfrak{q})$ is concave, and thus the limit $\lim\limits_{t\to+\infty}\mathrm{Tn}(t;\mathfrak{a}_{\bullet}^{\nu},\mathfrak{q})/t$ exists. Now we only need to prove $\lim\limits_{k\to+\infty}\mathrm{Tn}(k;\mathfrak{a}_{\bullet}^{\nu},\mathfrak{q})/k=\nu(\mathfrak{q})$ for $k\in\mathbb{N}_+$. Note that
    \[\mathrm{lct}^{\mathfrak{q}^k}(\mathfrak{a}^{\nu}_{\bullet})-k\nu(\mathfrak{q})=\mathrm{lct}^{\mathfrak{q}^k}(\mathfrak{a}^{\nu}_{\bullet})-\nu(\mathfrak{q}^k)\le\mathscr{A}(\nu)<+\infty, \ \forall k\in\mathbb{N}_+.\]
    Then we get 
    \[\lim_{k\to\infty}\frac{\mathrm{lct}^{\mathfrak{q}^k}(\mathfrak{a}^{\nu}_{\bullet})}{k}\le\nu(\mathfrak{q}).\]
    On the other hand, assume $\nu(\mathfrak{q})>0$. Let $a/b\in (0,\nu(\mathfrak{q})]\cap\mathbb{Q}$, where $a,b\in\mathbb{N}_+$ with $(a,b)=1$, and $\mathfrak{q}_{\bullet}=\{\mathfrak{q}_m\}=\{\mathfrak{q}^{mb}\}_{m=1}^{\infty}$ be a graded sequence of ideals. Then $\mathfrak{q}_m\subset\mathfrak{a}_{ma}^{\nu}$ for any $m$, and $\mathrm{lct}^{\mathfrak{q}^k}(\mathfrak{q}_{\bullet})\ge k/b$ for any $k\in\mathbb{N}_+$. It follows that
    \[\lim_{k\to\infty}\frac{\mathrm{lct}^{\mathfrak{q}^k}(\mathfrak{a}^{\nu}_{\bullet})}{k}\ge a\limsup_{k\to\infty}\frac{\mathrm{lct}^{\mathfrak{q}^k}(\mathfrak{q}_{\bullet})}{k}\ge \frac{a}{b}.\]
    Thus, $\lim\limits_{k\to+\infty}\mathrm{Tn}(k;\mathfrak{a}_{\bullet}^{\nu},\mathfrak{q})/k=\nu(\mathfrak{q})$.

    The proof is completed.
\end{proof}

\subsection{Proofs of Theorem \ref{thm.nu.inf.Zhou.val} and \ref{thm-ZVal.dense}}

Now we prove Theorem \ref{thm.nu.inf.Zhou.val} and \ref{thm-ZVal.dense}.

\begin{proof}[Proof of Theorem \ref{thm.nu.inf.Zhou.val}]
    Fix any nonzero ideal $\mathfrak{q}$ of $\mathcal{O}_o$. Since $\mathscr{A}(\nu)<+\infty$, we have $\mathrm{lct}^{\mathfrak{q}}(\mathfrak{a}_{\bullet}^{\nu})<+\infty$. For any $k\in\mathbb{N}_+$, denote
\[C^{\mathfrak{q}}_k:=\mathrm{lct}^{\mathfrak{q}^k}(\mathfrak{a}^{\nu}_{\bullet})<+\infty.\]
Then Lemma \ref{lem-existence.hat.nu.ge.nu} verifies that there exists a Zhou valuation $\tilde{\nu}_k$ related to $|\mathfrak{q}^k|^2$ such that $C^{\mathfrak{q}}_k\tilde{\nu}_k\ge \nu$, and $\tilde{\nu}_k(\mathfrak{q})\le 1/k$. Set
\[\hat{\nu}_k:=C^{\mathfrak{q}}_k\tilde{\nu}_k\in\mathrm{ZVal}_o.\]
Then $\hat{\nu}_k\ge \nu$, and
\[\nu(\mathfrak{q})\le\hat{\nu}_k(\mathfrak{q})\le \frac{C_k^{\mathfrak{q}}}{k}.\]
Lemma \ref{lem-lct.qt.avbullet} shows that $\lim_{k\to \infty}C_{k}^{\mathfrak{q}}/k=\nu(\mathfrak{q})$. Then we complete the proof of the theorem. 
\end{proof}

\begin{proof}[Proof of Theorem \ref{thm-ZVal.dense}]
    We need to prove that for any finite set $\{\mathfrak{q}_1,\ldots,\mathfrak{q}_r\}$ of ideals of $\mathcal{O}_o$, any $\nu\in\mathrm{Val}_o$, and any $n\in\mathbb{N}_+$, there exists $\hat{\nu}\in\mathrm{ZVal}_o$, such that
    \[\max_{1\le i \le r}|\hat\nu(\mathfrak{q}_i)-\nu(\mathfrak{q}_i)|<\frac{1}{n}.\]

    Without loss of generality, we assume that $\mathfrak{q}_1,\ldots,\mathfrak{q}_r$ are all nontrivial ideals. Set $\mathfrak{q}:=\mathfrak{q}_1\cdots\mathfrak{q}_r$. According to Theorem \ref{thm.nu.inf.Zhou.val}, we can find some $\hat{\nu}\in\mathrm{ZVal}_o$ such that $\hat\nu\ge\nu$, and
    \[\hat{\nu}(\mathfrak{q})<\nu(\mathfrak{q})+\varepsilon,\]
    for any $\varepsilon>0$ sufficiently small. Set
    \[\varepsilon=\frac{1}{n}\cdot\min_{1\le i\le r}\left(\prod_{1\le j\le r, \ j\ne i}\nu(\mathfrak{q}_j)\right).\]
    Then we have that
    \begin{flalign*}
        \begin{split}
            \prod_{1\le j\le r}\nu(\mathfrak{q}_j)&\le\prod_{1\le j\le r}\hat\nu(\mathfrak{q}_j)=\hat\nu(\mathfrak{q})<\nu(\mathfrak{q})+\varepsilon\\
            &=\nu(\mathfrak{q})+\frac{1}{n}\cdot\min_{1\le i\le r}\left(\prod_{1\le j\le r, \ j\ne i}\nu(\mathfrak{q}_j)\right)\\
            &=\min_{1\le i\le r}\left(\left(\frac{1}{n}+\nu(\mathfrak{q}_i)\right)\cdot\prod_{1\le j\le r, \ j\ne i}\nu(\mathfrak{q}_j)\right)
        \end{split}
    \end{flalign*}
    yields
    \[\nu(\mathfrak{q}_i)\le\hat\nu(\mathfrak{q}_i)<\nu(\mathfrak{q}_i)+\frac{1}{n}, \ \forall 1\le i\le r,\]
    which gives the desired $\hat\nu\in\mathrm{ZVal}_o$.
\end{proof}

\vspace{.1in} {\em Acknowledgements}.
The authors would like to thank Professor Mattias Jonsson for helpful discussions. The authors would also like to thank Xun Sun, and Dr. Zhitong Mi for checking the manuscript.
The second named author was supported by National Key R\&D Program of China 2021YFA1003100, NSFC-11825101 and NSFC-12425101. The third named author was supported by China Postdoctoral Science Foundation BX20230402 and 2023M743719.

\bibliographystyle{references}
\bibliography{xbib}

\end{document}